    \newcommand{\href}[2]{#2}
\newtheorem{theorem}{Theorem}[section]
\newtheorem{corollary}[theorem]{Corollary}
\newtheorem{lemma}[theorem]{Lemma}
\newtheorem{definition}[theorem]{Definition}
\newtheorem{example}[theorem]{Example}
\numberwithin{equation}{section}  %amsmath command: tie counter to section
  \newcounter{mnote}
  \let\oldmarginpar\marginpar
    \renewcommand\marginpar[1]{\-\oldmarginpar[\raggedleft\footnotesize #1]%
    {\raggedright\footnotesize #1}}
\definecolor{myblue}{rgb}{0.2,0.2,0.7}
\definecolor{mygreen}{rgb}{0,0.6,0}
\definecolor{mycyan}{rgb}{0,0.6,0.6}
\definecolor{myred}{rgb}{0.9,0.2,0.2}
\definecolor{mymagenta}{rgb}{0.9,0.2,0.9}
\definecolor{mywhite}{rgb}{1.0,1.0,1.0}
\definecolor{myblack}{rgb}{0.0,0.0,0.0}
\newcommand{\HO}[3][]{\ensuremath{H^{#1}\Omega^{#2} (#3)}}
\newcommand{\vn}[2][]{\ifthenelse{\equal{#1}{}}{\ensuremath{\|#2\|}}{\ensuremath{\|#2\|_{#1}}}} 
\newcommand{\bvn}[2][]{\ifthenelse{\equal{#1}{}}{\ensuremath{\left\|#2\right\|}}{\ensuremath{\left\|#2\right\|_{#1}}}} 
\newcommand{\mn}[2][]{\ifthenelse{\equal{#1}{}}{\ensuremath{\|#2\|}}{\ensuremath{\|#2\|_{#1}}}}
\newcommand {\aip}[2][]{\langle #2 \rangle_{#1}}
\newcommand {\baip}[2][]{\left\langle #2 \right\rangle_{#1}}
\newcommand {\keyterm}[2][]{\textbf{#2}}
\newcommand{\poly} {{\mathcal P}}
\newcommand{\Lin} {\mathcal L}
\newcommand{\R} {\mathbb R}
\newcommand{\Div}{\operatorname{div}}
\newcommand{\cancel}[1]{}
\newcommand{\raw}{\rightarrow}
\newcommand{\Tau}{\mathcal T}
\newcommand{\veps}{\varepsilon}
\newcommand\dist{\operatorname{dist}}
\begin{document}

\title[FEEC for Evolution Problems on Manifolds]   % short title of paper
      {Finite Element Exterior Calculus for Parabolic Evolution Problems On Riemannian Hypersurfaces}

\author[M. Holst]{Michael Holst}
\email{mholst@math.ucsd.edu}

\author[C. Tiee]{Christopher Tiee}
\email{ctiee@ccom.ucsd.edu}

\address{Department of Mathematics\\
         University of California San Diego\\ 
         La Jolla CA 92093}

\thanks{MH was supported in part by
        NSF Awards~1217175, 1262982, and 1318480.
        CT was supported in part by
        NSF Award~1217175}
\date{\today}

\keywords{FEEC, elliptic equations, evolution equations,
 approximation theory, inf-sup conditions, {\em a priori} estimates,
 variational crimes, equations on manifolds}

% !TEX root = m.tex
% suggested new abstract (from mike)
% 
\begin{abstract}
Over the last ten years, the Finite Element Exterior Calculus (FEEC) has
been developed as a general framework for linear mixed variational problems, 
their numerical approximation by mixed methods, and their error analysis. 
The basic approach in FEEC, pioneered by Arnold, Falk, and Winther in two
seminal articles in 2006 and 2010, interprets these problems in the
setting of {\em Hilbert complexes}, leading to a more general and complete 
understanding.
Over the last five years, the FEEC framework has been extended to a broader
set of problems.
One such extension, due to Holst and Stern in 2012, was to problems 
with {\em variational crimes}, allowing for the analysis and numerical 
approximation of linear and geometric elliptic partial differential 
equations on Riemannian manifolds of arbitrary spatial dimension.
Their results substantially generalize the existing surface finite element 
approximation theory in several respects.
In 2014, Gillette, Holst, and Zhu extended the FEEC in another direction, 
namely to parabolic and hyperbolic evolution systems by combining the FEEC
framework for elliptic operators with classical approaches for parabolic
and hyperbolic operators, by viewing solutions to the evolution problem
as lying in Bochner spaces (spaces of Banach-space valued parametrized curves).
Related work on developing an FEEC theory for parabolic evolution problems 
has also been done independently by Arnold and Chen.
In this article, we extend the work of Gillette-Holst-Zhu and Arnold-Chen
to evolution problems on Riemannian manifolds, through the use
of framework developed by Holst and Stern for analyzing variational crimes.
We establish \emph{a priori} error estimates that reduce to the results 
from earlier work in the flat (non-criminal) setting. 
Some numerical examples are also presented. 
\end{abstract}

\maketitle

\clearpage

%\vspace*{-1.2cm}
%{\footnotesize
\tableofcontents
%}
%\vspace*{-0.5cm}

%\clearpage

% !TEX root = m.tex
% HoTi14a
% 
%%%%%%%%%%%%%%%%%%%%%%%%%%%%%%%%%%%%%%%%%%%%%%%%%%%%%%%%%%%%%%%%%%%%%%%%%%%%%%
\newcommand{\f}[1]{\mathfrak{#1}}
\newcommand{\cl}[1]{\overline{#1}}
\newcommand{\e}[1]{\mathcal{#1}}
\newcommand{\rst}[2]{\left.#1\right|_{#2}}
\section{Introduction}
Arnold, Falk, and Winther~\cite{AFW2006,AFW2010} introduced the Finite Element Exterior Calculus (FEEC) as a general framework for linear mixed variational problems, their numerical approximation by mixed methods, and their error analysis. They recast these problems using the ideas and tools of {\em Hilbert complexes}, leading to a more complete understanding. Subsequently, Holst and Stern~\cite{HoSt10a}
extended the Arnold--Falk--Winther framework to include {\em variational
crimes}, allowing for the analysis and numerical approximation of linear 
and geometric elliptic partial differential equations on 
Riemannian manifolds of arbitrary spatial dimension, generalizing the 
existing surface finite element approximation theory in several 
directions. Gillette, Holst, and Zhu~\cite{GiHo11a} extended the FEEC in another direction, namely
to parabolic and hyperbolic evolution systems by combining recent work on the FEEC for elliptic problems 
with a classical approach of Thom\'ee~\cite{T2006} to solving evolution problems using semi-discrete
finite element methods, by viewing solutions to the evolution problem
as lying in Bochner spaces (spaces of Banach-space valued parametrized curves). Arnold and Chen~\cite{AC2012} independently developed related work, for generalized Hodge Laplacian parabolic problems for differential forms of arbitrary degree, and Holst, Mihalik, and Szypowski~\cite{HMS13a} consider similar work in adaptive finite element methods. In this article, we aim to combine the approaches of the above articles, extending the work of Gillette, Holst, and Zhu~\cite{GiHo11a} and Arnold and Chen~\cite{AC2012} to parabolic evolution problems on Riemannian manifolds by using the framework of Holst and Stern~\cite{HoSt10a}.

\subsection{The Hodge heat equation and its mixed form}
We now introduce our problem by some concrete motivation. We consider an evolution equation for differential forms on a manifold. Then we rephrase it as a mixed problem as an intermediate step toward semidiscretization using mixed finite element methods. We then see how this allows us to leverage existing \emph{a priori} error estimates for parabolic problems, and see how it fits in the framework of Hilbert complexes.

Let $M$ be a compact oriented Riemannian $n$-manifold embedded in $\R^{n+1}$. The \keyterm{Hodge heat equation} is to find time-dependent $k$-form $u : M \times  [0,T] \to \Lambda^k(M)$ such that
\begin{equation}
\label{eq:par-cnts}
\begin{tabular}{rlll}
$u_t - \Delta u = u_{t} + (\delta d + d\delta)u = f$ & in $M$, & for $t>0$ \\
$u(\cdot,0)=g$ & in $M$.
\end{tabular}
\end{equation}
where $g$ is an initial $k$-form, and $f$, a possibly time-dependent $k$-form, is a source term. Note that no boundary conditions are needed for manifolds without boundary. This is the problem studied by Arnold and Chen~\cite{AC2012}, and in the case $k=n$, one of the problems studied by Gillette, Holst, and Zhu~\cite{GiHo11a}, building upon work in special cases for domains in $\R^{2}$ and $\R^{3}$ by Johnson and Thom\'ee~\cite{JT1981,T2006}

For the stability of the numerical approximations with the methods of \cite{HoSt10b} and \cite{AFW2010}, we recast the problem in mixed form, converting the problem into a system of differential equations. Motivating the problem by setting $\sigma = \delta u$ (recall that for the Dirichlet problem and $k=n$, $\delta$ here corresponds to the gradient in Euclidean space, and is the adjoint $d$, corresponding to the \emph{negative} divergence), and taking the adjoint, we have
\begin{equation}
\label{eq:par-mixedweak}
\begin{tabular}{rllll}
$\aip{\sigma,\omega} - \aip{u,d \omega}$ & $= 0,$ & $\forall~\omega\in H\Omega^{k-1}(M),$ & $t>0$, \\
$\aip{u_{t},\varphi} +\aip{d\sigma,\varphi}+ \aip{du,d\varphi}$ & $= \aip{f,\varphi},$ & $\forall~ \varphi\in H\Omega^{k}(M)$ & $t>0$.\\
 $u(0)$ &$=g$.
\end{tabular}
\end{equation}
Unlike the elliptic case, we do not have to explicitly account for harmonic forms in the formulation of the equations themselves, but they will definitely play a critical role in our analysis and bring new results not apparent in the $k=n$ case.

\subsection{Semidiscretization of the equation}
In order to analyze the numerical approximation, we semidiscretize our problem in space. In our case, we shall assume, following \cite{HoSt10a}, that we have a family of approximating surfaces $M_h$ to the hypersurface $M$, given as the zero level set of some signed distance function, all contained in a tubular neighborhood $U$ of $M$, and a projection $a : M_h \to M$ along the surface normal (of $M$). The surfaces may be a triangulations, i.e., piecewise linear (studied by Dziuk and Demlow in \cite{Dziuk88,DeDz06}), or piecewise polynomial (obtained by Lagrange interpolation over a triangulation of the projection $a$, as later studied by Demlow in \cite{Demlow2009}). We pull forms on $M_h$ to $M$ back via the inverse of the normal projection, which furnishes injective morphisms $i^k_h : \Lambda^k_h \hookrightarrow H\Omega^{k}(M)$ as required by the theory in \cite{HoSt10a}, which we shall review in Section \ref{sec:feec} below. Finally, we need a family of linear projections $\Pi^k_h :  H\Omega^{k}(M) \to \Lambda^{k}_h$ such that $\Pi_h \circ i_h = \operatorname{id}$  which allow us to interpolate given data into the chosen finite element spaces---this is necessary because some of the more obvious, natural seeming choices of operators, such as $i_{h}^{*}$, can be difficult to compute (nevertheless, $i_{h}^{*}$ will still be useful theoretically).

We now can formulate the semidiscrete problem: we seek a solution $(\sigma_h,u_{h})\in H_h\times S_h\subseteq H\Omega^{k-1} \times H\Omega^{k}$ such that
\begin{equation}
\label{eq:par-semidisc-first}
\begin{tabular}{rllll}
$\aip[h]{\sigma_h,\omega_h} - \aip[h]{u_h,d\omega_h}$ & $= 0,$ & $\forall~\omega_h\in H_h,$  & $t>0$\\
$\aip[h]{u_{h,t},\varphi_h} +\aip[h]{d\sigma_h,\varphi_h}+ \aip[h]{du_{h},d\varphi_{h}}$ & $= \aip[h]{\Pi_{h}f,\varphi_h},$ & $\forall~ \varphi_h\in S_h$ & $t>0$\\
 $u_h(0)$ &$=g_h$.
\end{tabular}
\end{equation}
We shall describe how to define $g_h \in S_h$ shortly; it is to be some suitable interpolation of $g$. As $S_{h}$ and $H_{h}$ are finite-dimensional spaces, we can reduce this to a system of ODEs in Euclidean space by choosing bases $(\psi_{i})$ for $S_{h}$ and $(\phi_{k})$ for $H_{h}$; expanding the unknowns $\sigma_{h} = \sum_{i} \Sigma^{i}(t) \psi_{i}$ and $u_{h} = \sum_{k} U^{k}(t) \phi_{k}$; substituting these basis functions as test functions to form matrices $A_{k\ell} =\aip{\phi_{k},\phi_{\ell}}$, $B_{ik} = \aip{d\psi_{i},\phi_{k}}$, and $D_{ij} =\aip{\psi_{i},\psi_{j}}$; and finally forming the vectors for the load data $F$ defined by $F_{k} = \aip{F,\phi_{k}}$, and initial condition $G$ defined by $g_{h} = \sum G^{k} \phi_{k}$. We thus arrive at the matrix equations for the unknown, time-dependent coefficient vectors $\Sigma$ and $U$:
\begin{align*}
  D\Sigma -B^TU&= 0,\\
AU_t +B\Sigma  + K U&= F,
\;\;\text{for $t>0$} \\
U(0) & =G.
\end{align*}

The matrices $A$ and $D$ are positive definite, hence invertible. Substituting $\Sigma = D^{-1} B^{T} U$, we have the system of ODEs 
\[AU_t + (BD^{-1}B^T + K)U = F,\;\;\text{for $t>0$,\;\; $U(0)= G$},\]
which has a unique solution by the usual ODE theory. For purposes of actually numerically integrating the ODE, namely, discretizing fully in space and time, it is better not to use the above formulation, because it can lead to dense matrices. Computationally, this is due to the explicit presence of an inverse, $D^{-1}$, not directly multiplying the variable; conceptually, this is actually a statement about the discrete adjoint to the codifferential $d^{*}_{h}$ generally having global support even if the finite element functions are only locally supported \cite{AC2012}. Instead, we differentiate the first equation with respect to time, getting $D\Sigma_{t} - B^{T} U_{t} = 0$, which leads to the block system
\begin{equation}\label{eq:discrete-evolution-system}
\frac{d}{dt}\begin{pmatrix}
D &-B^{T}\\
0 & A
\end{pmatrix}
\begin{pmatrix}
\Sigma \\ U
\end{pmatrix} = \begin{pmatrix}0 & 0 \\
-B & -K
\end{pmatrix}\begin{pmatrix}
\Sigma \\ U
\end{pmatrix} + \begin{pmatrix} 0\\ F
\end{pmatrix}
\end{equation}
which is still well-defined ODE for $\Sigma$ and $U$, as the invertible matrices $A$ and $D$ appear on the diagonal. This differentiated equation also plays a role in the showing that the continuous problem is well-posed.

These equations differ from\ those studied by Gillette, Holst, and Zhu \cite{GiHo11a}, Arnold and Chen \cite{AC2012}, and Thom\'ee \cite{T2006} by the choice of finite element spaces---here we are assuming them to be in some Sobolev space of differential forms on manifolds (or in a triangulated mesh in a tubular neighborhood) rather than subsets of Euclidean space. This suggests that we should try to gather these commonalities, examine what happens in abstract Hilbert complexes, and see how general a form of error estimate we can get this way. 
\subsection{Error analysis}
The general idea of the method of Thom\'ee~\cite{T2006} is to compare the semidiscrete solution to an \keyterm{elliptic projection} of the data, a method first explored by Wheeler~\cite{W1973}. If we assume that we already have a solution $u$ to the continuous problem, then for each fixed time $t$, $u(t)$ can be considered as trivially solving an elliptic equation with data $-\Delta u(t)$. Thus, using the methods developed in \cite{AFW2010}, we consider the discrete solution $\tilde u_{h}$ for $u$ in this elliptic problem (namely, applying the discrete solution operator $T_{h}$ to $-\Delta u(t)$). This may be compared to the true solution (at each fixed time) using the error estimates in \cite{AFW2010}. What remains is to compare the semidiscrete solution $u_{h}$ (as defined by the ODEs \eqref{eq:par-semidisc-first} above) to the elliptic projection, so that we have the full error estimate by the triangle inequality. Thom\'ee derives the following estimates, for finite elements in the plane ($n=2$) of top-degree forms ($k=2$, there represented by a scalar proxy), for $g_h$ the elliptic projection of the initial condition $g$ and $t\geq 0$:
\begin{align}
\vn[L^2]{u_h(t)-u(t)} & \leq ch^2\left(\vn[H^2]{u(t)} +\int_0^t\vn[H^{2}]{u_t(s)}ds \right), \label{eq:thomee-1-u}\\
\vn[L^{2}]{\sigma_h(t)-\sigma(t)} & \leq ch^2\left(\vn[H^3]{u(t)} +\left(\int_0^t\vn[H^2]{u_t(s)}^2ds\right)^{1/2} \right).\label{eq:thomee-1-s}
\end{align}
Gillette, Holst, and Zhu \cite{GiHo11a}, and Arnold and Chen \cite{AC2012} generalize these estimates and represent them in terms of Bochner norms. These estimates describe the accumulation of error up to fixed time value $t$, assuming, of course, that the spaces finite elements are sufficiently regular to allow those estimates. The key equation that makes these estimates possible are Thom\'ee's error evolution equations: defining $\rho = \vn{\tilde u_{h}(t)-u(t)}$, $\theta = \vn{u_{h}(t) - \tilde u_{h}(t)}$, and $\varepsilon = \vn{\sigma_{h}(t) - \tilde \sigma_{h}(t)}$, we have
\begin{align*}
\aip{\theta_{t},\phi_{h}}-\aip{\Div \varepsilon(t),\phi_{h}} &= -\aip{\rho_{t},\phi_{h}}\\
\aip{\varepsilon,\omega_{h}} +\aip{\theta,\Div \omega_{h}}&= 0.
\end{align*}
These are used to derive certain differential inequalities and make Gr\"onwall-type estimates. In this chapter, we examine the above error equations and place them in a more abstract framework. We use Bochner spaces (also used by \cite{GiHo11a}) to describe time evolution in Hilbert complexes, building on their successful use in elliptic problems. We investigate Thom\'ee's method in this framework to gain further insight into how finite element error estimates evolve in time.

\subsection{Summary of the paper}
The remainder of this paper is structured as follows.
In Section~\ref{sec:feec}, we review the finite element exterior calculus (FEEC) and the variational crimes framework of Holst and Stern~\cite{HoSt10a}. We prove some extensions in order to account for problems with prescribed harmonic forms; this is what allows the elliptic projection to work in the case where harmonic forms are present.
In Section~\ref{sec:abs-evol}, we formulate abstract parabolic problems in Bochner spaces and extend some standard results on the existence and uniqueness of strong solutions, and describe how this problem fits into that framework.
In Section~\ref{sec:results-par}, we extend the {\em a priori} error estimates for Galerkin mixed
finite element methods to parabolic problems on Hilbert complexes. Then, we relate the resuls to the problem on manifolds. The main abstract result is Theorem~\ref{eq:main-parabolic-estimate}, which uses the previous results from the FEEC framework with variational crimes, in order to understand how those error terms evolve with time. We then specialize, in Section~\ref{sec:par-eqns-riem} to parabolic equations on Riemannian manifolds, our original motivating example, and see how this generalizes the error estimates of Thom\'ee~\cite{T2006}, Gillette, Holst, and Zhu~\cite{GiHo11a}, and Holst and Stern~\cite{HoSt10a}.
In Section~\ref{sec:num-experiments}, we present a numerical experiment comparing the methods based on this mixed form to more straightforward implementations in the scalar heat equation case.

%%%%%%%%%%%%%%%%%%%%%%%%%%%%%%%%%%%%%%%%%%%%%%%%%%%%%%%%%%
%%%%%%%%%%%%%%%%%%%%%%%%%%%%%%%%%%%%%%%%%%%%%%%%%%%%%%%%%%
%%%%%%%%%%%%%%%%%%%%%%%%%%%%%%%%%%%%%%%%%%%%%%%%%%%%%%%%%%

\section{Finite Element Exterior Calculus}
\label{sec:feec}
We review here the relevant results from the finite element exterior calculus (FEEC) that we will need for this paper. FEEC was introduced in Arnold, Falk and Winther~\cite{AFW2006,AFW2010} as a framework for deriving error estimates and formulating stable numerical methods for a large class of elliptic \textsc{pde}. One of the central ideas which helped unify many of these distinct methods into a structured framework has been the idea of \keyterm[Hilbert complex]{Hilbert complexes}~\cite{BrLe92}, which abstracts the essential features of the cochain complexes commonly found in exterior calculus and places them in a context where modern methods of functional analysis may be applied. This assists in formulating and solving boundary value problems, in direct analogy to how Sobolev spaces have helped provide a framework for solving such problems for functions. Arnold, Falk, and Winther~\cite{AFW2010} place numerical methods into this framework by choosing certain finite-dimensional subspaces satisfying certain compatibility and approximation properties. Holst and Stern~\cite{HoSt10a} extended this framework by considering the case in which there is an injective morphism from a finite-dimensional complex to the complex of interest, without it necessarily being inclusion. This allows the treatment of geometric \keyterm{variational crimes}~\cite{Brae07,BrSc02}, where an approximating manifold (on which it may be far easier to choose finite element spaces) no longer coincides with the actual manifold on which we seek our solution. We review the theory as detailed in \cite{HoSt10a} and refer the reader there for details.

\subsection{Hilbert Complexes}
As stated before, the essential details of differential complexes, such as the de Rham complex, are nicely captured in the notion of Hilbert complexes. This enables us to see clearly where many elements of boundary value problems come from, in particular, the Laplacian, Hodge decomposition theorem, and Poincar\'e inequality. In addition, it allows us to see how to carry these notions over to numerical approximations.
\begin{definition}[Hilbert complexes] We define a \keyterm{Hilbert complex} $(W,d)$ to be sequence of Hilbert spaces $W^{k}$ with possibly unbounded linear maps $d^{k}: V^{k}\subseteq W^{k} \to V^{k+1} \subseteq W^{k+1}$, such that each $d^{k}$ has closed graph, densely defined, and satisfies the \keyterm{cochain property} $d^{k} \circ d^{k-1} = 0$ (this is often abbreviated $d^{2} = 0$; we often omit the superscripts when the context is clear). We call each $V^{k}$ the \keyterm[domain of a linear operator]{domain} of $d^{k}$. We will often refer to elements of such Hilbert spaces as ``forms,'' being motivated by the canonical example of the de Rham complex. The Hilbert complex is called a \keyterm[Hilbert complex!closed]{closed complex} if each image space $\f B^{k} =  d^{k-1} V^{k-1}$ (called the \keyterm[coboundary!in Hilbert complexes]{$k$-coboundaries} is closed in $W^{k}$, and a \keyterm[Hilbert complex!bounded]{bounded complex} if each $d^{k}$ is in fact a bounded linear map. The most common arrangement in which one finds a bounded complex is by taking the sequence of domains $V^{k}$, the same maps $d^{k}$, but now with the \keyterm[inner product!graph inner product in Hilbert complexes]{graph inner product}
\[
\aip[V]{v,w} = \aip{v,w} + \aip{d^{k} v, d^{k} w}.
\]
for all $v,w \in V^{k}$. Unsubscripted inner products and norms will always be assumed to be the ones associated to $W^{k}$.
\end{definition}
\begin{definition}[Cocycles, Coboundaries, and Cohomology] The kernel of the map $d^{k}$ in $V^{k}$ will be called $\f Z^{k}$, the \keyterm[cocycle]{$k$-cocycles} and, as before, we have $\f B^{k} = d^{k-1} V^{k-1}$. Since $d^{k} \circ d^{k-1} = 0$, we have $\f B^{k} \subseteq \f Z^{k}$, so we have the \keyterm[cohomology!in Hilbert complexes]{$k$-cohomology} $\f Z^{k}/\f B^{k}$. The \keyterm[harmonic form!in a Hilbert complex]{harmonic space} $\f H^{k}$ is the orthogonal complement of $\f B^{k}$ in $\f Z^{k}$. This means, in general, we have an orthogonal decomposition $\f Z^{k} = \cl{\f B^{k}} \oplus \f H^{k}$, and we have that $\f H^{k}$ is isomorphic to $\f Z^{k}/ \cl{\f B^{k}}$, the \keyterm[cohomology!reduced]{reduced cohomology}, which of course corresponds to the usual cohomology for closed complexes.
\end{definition}
\begin{definition}[Dual complexes and adjoints] For a Hilbert complex $(W,d)$, we can form the \keyterm[Hilbert complex!dual complex]{dual complex} $(W^{*},d^{*})$ which consists of spaces $W^{*}_{k}  = W^{k}$, maps  $d^{*}_{k} : V^{*}_{k}\subseteq W^{*}_{k} \to V^{*}_{k-1} \subseteq W^{*}_{k-1}$ such that $d^{*}_{k+1} = (d^{k})^{*}$, the adjoint operator, that is:
\[
\aip{d^{*}_{k+1} v, w} = \aip{v, d^{k}w}.
\]
The operators $d^{*}$ decrease degree, so this is a chain complex, rather than a cochain complex; the analogous concepts to cocycles and coboundaries extend to this case and we write $\f Z^{*}_{k}$ and $\f B^{*}_{k}$ for them.
\end{definition}

\begin{definition}[Morphisms of Hilbert complexes] Let $(W,d)$ and $(W',d')$ be two Hilbert complexes. $f : W\to W'$ is called a \keyterm{morphism of Hilbert complexes} if we have a sequence of bounded linear maps $f^{k} : W^{k} \to W^{\prime k}$ such that $d^{\prime k} \circ f^{k} = f^{k+1} \circ d^{k}$ (they commute with the differentials).
\end{definition}
With the above, we can show the following \keyterm[Hodge decomposition!in Hilbert complexes]{weak Hodge decomposition}:
\begin{theorem}[Hodge Decomposition Theorem] Let $(W,d)$ be a Hilbert complex with domain complex $(V,d)$. Then we have the $W$- and $V$-orthogonal decompositions
\begin{align}\label{eqn:HC-hodge-decomp}
W^{k} &= \cl{\f B^{k}} \oplus \f H^{k} \oplus \f Z^{k\perp_{W}}\\
V^{k} &= \cl{\f B^{k}} \oplus \f H^{k} \oplus \f Z^{k\perp_{V}}.
\end{align}
where $\f Z^{k\perp_{ V}} = \f Z^{k\perp_{W}} \cap V^{k}$.
\end{theorem}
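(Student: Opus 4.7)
My approach is to reduce the two claimed decompositions to a single standard fact about orthogonal complements in Hilbert spaces applied to the closed subspace $\f Z^k$, and then to refine the splitting of $\f Z^k$ itself via the definition of the harmonic space. Concretely, I would first establish that $\f Z^k$ is closed in $W^k$, then write $W^k = \f Z^k \oplus \f Z^{k\perp_W}$, and finally use the definition $\f H^k := \f Z^k \ominus \cl{\f B^k}$ to split $\f Z^k$ further as $\cl{\f B^k} \oplus \f H^k$. Since $\f B^k \subseteq \f Z^k$ follows from the cochain property $d^k\circ d^{k-1}=0$, the decomposition is internally consistent. Putting these pieces together yields the first identity.

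The key technical input is closedness of $\f Z^k$. Because $d^k$ has closed graph and is densely defined, any sequence $z_n \in \f Z^k$ with $z_n\to z$ in $W^k$ satisfies $d^k z_n = 0 \to 0$, and the closed graph property forces $z \in V^k$ with $d^k z = 0$; hence $z \in \f Z^k$. Once $\f Z^k$ is known to be a closed subspace of $W^k$, the $W$-orthogonal decomposition $W^k = \f Z^k \oplus \f Z^{k\perp_W}$ is standard, and the (closed) splitting $\f Z^k = \cl{\f B^k} \oplus \f H^k$ is immediate from the definition of $\f H^k$ as the orthogonal complement of $\f B^k$ inside $\f Z^k$. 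No closedness assumption on $\f B^k$ is needed, which is why this is called the \emph{weak} Hodge decomposition.

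For the $V$-orthogonal decomposition, the trick is to notice that the graph inner product $\aip[V]{v,w} = \aip{v,w} + \aip{d v, d w}$ restricts to the $W$-inner product on $\f Z^k$, since $d$ vanishes there. Consequently the splitting $\f Z^k = \cl{\f B^k} \oplus \f H^k$ is simultaneously $W$- and $V$-orthogonal, so I only need to identify the $V$-orthogonal complement of $\f Z^k$ in $V^k$. A direct calculation shows that for $v \in V^k$, the condition $\aip[V]{v,z}=0$ for every $z \in \f Z^k$ reduces to $\aip{v,z}=0$ (again because $d z = 0$), whence $\f Z^{k\perp_V} = \f Z^{k\perp_W} \cap V^k$. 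Since $\f Z^k$ is closed in $V^k$ (it is the preimage of $\{0\}$ under the bounded map $d^k \colon V^k \to W^{k+1}$ in the graph norm), the usual orthogonal complement theorem in the Hilbert space $(V^k, \aip[V]{\cdot,\cdot})$ supplies $V^k = \f Z^k \oplus \f Z^{k\perp_V}$, and combining with the splitting of $\f Z^k$ gives the second identity.

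The main obstacle, such as it is, lies not in either decomposition individually but in confirming that the \emph{same} harmonic space $\f H^k$ serves both. This is where the observation that $d$ vanishes on $\f Z^k$ — collapsing the two inner products on $\f Z^k$ — is essential; without it, one would worry that the $V$-orthogonal complement of $\cl{\f B^k}$ inside $\f Z^k$ could differ from the $W$-orthogonal one. Everything else is a routine application of Hilbert space geometry and the closed-operator hypothesis on $d^k$.
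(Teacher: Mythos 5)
Your proof is correct. The paper states this theorem without proof, as part of its review of standard Hilbert complex theory from Arnold--Falk--Winther and Br\"uning--Lesch, and your argument is precisely the standard one: closedness of $\f Z^k$ from the closed-graph hypothesis, the orthogonal splitting $W^k = \f Z^k \oplus \f Z^{k\perp_W}$, the refinement $\f Z^k = \cl{\f B^k}\oplus\f H^k$ from the definition of the harmonic space, and the observation that the graph inner product collapses to the $W$-inner product on $\f Z^k$ (which the paper itself records immediately after the statement), so that the same $\f H^k$ serves both decompositions and $\f Z^{k\perp_V} = \f Z^{k\perp_W}\cap V^k$. The only point you leave implicit is that $(V^k,\aip[V]{\cdot,\cdot})$ is complete, which is needed to invoke the orthogonal complement theorem there; this is immediate from the closed-graph hypothesis on $d^k$, so it is not a gap.
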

Of course, if $\f B^{k}$ is closed, then the extra closure is unnecessary, and we omit the term ``weak''. We shall simply write $\f Z^{k\perp}$ for $\f Z^{k\perp_{V}}$, which is will be the most useful orthogonal complement for our purposes. The orthogonal projections $P_{U}$ for a subspace $U$ will be in the $W$-inner product unless otherwise stated (although again, due to the two inner products coinciding on $\f Z^{k}$ and its subspaces, they may be the same). We note that by the abstract properties of adjoints (\cite[\S 3.1.2]{AFW2010}), $\f Z^{k\perp_{W}} = \cl{\f B^{*}_{k}}$, and $\f B^{k\perp_{W}} = \f Z^{*}_{k}$. Also very useful is that the $V$- and $W$-norms agree on $\f Z$ and hence on $\f B$ and $\f H$.

The following inequality is an important result crucial to the stability of our solutions to the boundary value problems as well as the numerical approximations:
\begin{theorem}[Abstract Poincar\'e Inequality] If $(V,d)$ is a closed, bounded Hilbert complex, then there exists a constant $c_{P} >0$ such that for all $v \in \f Z^{k\perp}$,
\[
\|v\|_{V} \leq c_{P} \|d^{k}v\|_{V}.
\]
\end{theorem}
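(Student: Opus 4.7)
The plan is to derive the Poincar\'e inequality from the bounded inverse (open mapping) theorem, applied to the restriction of $d^{k}$ to $\f{Z}^{k\perp}$. The strategy mirrors the classical proof that a closed-range bounded linear operator is bounded below on the orthogonal complement of its kernel, adapted to the present Hilbert complex setting.

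First, I would verify that the domain and codomain of the restricted map are genuine Hilbert spaces in the $V$-norm. Since $(V,d)$ is bounded, $d^{k}$ is continuous as a map $V^{k}\to V^{k+1}$, so its kernel $\f{Z}^{k}$ is closed in $V^{k}$; hence $\f{Z}^{k\perp}$, being its $V$-orthogonal complement, is a closed subspace of $V^{k}$ and inherits a Hilbert structure. For the codomain, the closed complex hypothesis says $\f{B}^{k+1}$ is closed in $W^{k+1}$; since the $V$-topology is stronger than the $W$-topology, $\f{B}^{k+1}$ is also closed as a subset of $V^{k+1}$. Moreover, for any $b = d^{k}v \in \f{B}^{k+1}$ one has $d^{k+1}b = 0$ by the cochain property, so the $V$- and $W$-norms actually coincide on $\f{B}^{k+1}$ (as noted in the excerpt).

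Next, I would show that the restriction $d^{k}|_{\f{Z}^{k\perp}} : \f{Z}^{k\perp} \to \f{B}^{k+1}$ is a bounded linear bijection. Boundedness is immediate from the bounded complex assumption. Injectivity follows because $\ker(d^{k}) = \f{Z}^{k}$ and $\f{Z}^{k} \cap \f{Z}^{k\perp} = \{0\}$. For surjectivity, given $b = d^{k}v$ for some $v\in V^{k}$, the $V$-orthogonal decomposition $v = z + z^{\perp}$ with $z\in\f{Z}^{k}$ and $z^{\perp}\in\f{Z}^{k\perp}$ yields $b = d^{k}z^{\perp}$, since $d^{k}z = 0$.

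Finally, I would apply the bounded inverse theorem: a bounded linear bijection between Hilbert spaces has a bounded inverse. Therefore there exists $c_{P} > 0$ such that
\[
\|v\|_{V} \;\leq\; c_{P}\,\|d^{k}v\|_{V} \qquad \text{for all } v\in\f{Z}^{k\perp},
\]
which is precisely the claimed inequality (and in fact, using $\|d^{k}v\|_{V} = \|d^{k}v\|$, one has the slightly stronger form with the $W$-norm on the right). The principal subtlety, and the only place the hypotheses genuinely enter, is ensuring $\f{B}^{k+1}$ is closed in the relevant topology; this is exactly what the closed complex assumption guarantees, and without it the restricted map would have dense but non-closed range and the bounded inverse theorem would not apply.
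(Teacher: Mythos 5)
Your proof is correct, and it is essentially the standard argument: the paper itself states this theorem without proof (it is recalled from Arnold--Falk--Winther~\cite{AFW2010}), and the proof given there is exactly your route --- the closed-complex hypothesis makes $\f B^{k+1}$ a Banach space in the (coincident) $V$- and $W$-norms, $d^{k}$ restricted to $\f Z^{k\perp}$ is a bounded bijection onto it, and Banach's bounded inverse theorem supplies the constant. Your attention to the one genuine subtlety --- completeness of the range in the relevant norm --- is exactly where the closedness assumption is used.
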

In the case that $(V,d)$ is the domain complex associated to a closed Hilbert complex $(W,d)$, $(V,d)$ is again closed, and the additional graph inner product term vanishes: $\|d^{k} v\|_{V} = \|d^{k}v\|$. We now introduce the abstract version of the Hodge Laplacian and the associated problem.

\begin{definition}[Abstract Hodge Laplacian problems] We consider the operator $L = dd^{*} + d^{*}d$ on a Hilbert complex $(W,d)$, called the \keyterm[Laplacian!Abstract Hodge]{abstract Hodge Laplacian}. Its domain is $D_{L} = \{ u \in V^{k} \cap V_{k}^{*} : du \in V_{k+1}^{*}, d^{*} u \in V^{k-1}\}$, and the \keyterm[Laplacian!Abstract Hodge problem]{Hodge Laplacian problem} is to seek $u \in V^{k} \cap V_{k}$, given $f \in W^{k}$, such that
\begin{equation}\label{eqn:basic-abstract-hodge}
\aip{du,dv} + \aip{d^{*}u,d^{*}v} = \aip{f,v}
\end{equation}
for all $v \in V^{k} \cap V^{*}_{k}$. This is simply the weak form of the Laplacian and any $u \in V^{k}\cap V^{*}
_{k}$ satisfying the above is called a \keyterm[weak solution!to the Abstract Hodge problem]{weak solution}. Owing to difficulties in the approximation theory for such a problem (it is difficult to construct finite elements for the space $V^{k} \cap V_{k}^{*}$), Arnold, Falk, and Winther~\cite{AFW2010} formulated the \keyterm{mixed abstract Hodge Laplacian problem} by defining auxiliary variables $\sigma = d^{*} u$ and $p = P_{\f H} f$, the orthogonal projection of $f$ into the harmonic space, and considering a \emph{system} of equations, to seek $(\sigma,u,p) \in V^{k-1} \times V^{k} \times \f H^{k}$ such that
\begin{equation}\label{eqn:mixed-hodge-laplacian}
\begin{tabular}{rll}
$\aip{\sigma, \tau} - \aip{u, d\tau}$ & $=0$ & $\forall \tau \in V^{k-1}$\\
$\aip{d\sigma, v} + \aip{du,dv} + \aip{p,v}$ & $=\aip{f,v}$ & $\forall v\in V^{k}$\\
$\aip{u, q} $ & $=0$ & $\forall q \in \f H^{k}$.
\end{tabular}
\end{equation}

\end{definition}
The first equation is the weak form of $\sigma = d^{*}u$, the second is the main equation \eqref{eqn:basic-abstract-hodge} modified to account for a harmonic term so that a solution exists, and the third enforces uniqueness by requiring perpendicularity to the harmonic space. With these modifications, the problem is well-posed by considering the bilinear form (writing $\f X^{k} := V^{k-1} \times V^{k} \times \f H^{k}$) $B: \f X^{k} \times \f X^{k} \to \R$ defined by
\begin{equation}\label{eqn:afw-bilin-form}
B(\sigma,u,p;\tau,v,q) := \aip{\sigma,\tau} - \aip{d\tau, u} + \aip{d\sigma,v} +\aip{du,dv} + \aip{p,v} - \aip{u,q}.
\end{equation}
and linear functional $F \in (\f X^{k})^{*}$ given by $F(\tau,v,q) = \aip{f,v}$. The form $B$ is \emph{not} coercive, but rather, for a closed Hilbert complex, satisfies an \keyterm{inf-sup condition} \cite{AFW2010,Babuska.I1971}: there exists $\gamma > 0$ (the \keyterm{stability constant}) such that
\[
\inf_{(\sigma,u,p)\neq 0} \sup_{(\tau,v,q)\neq 0} \frac{B(\sigma,u,p;\tau,v,q)}{\| (\sigma,u,p) \|_{\f X} \|(\tau, v, q)\|_{\f X}} =: \gamma > 0.
\]
where we have defined a standard norm on products: $\|(\sigma,u,p)\|_{\f X} := \|\sigma\|_{V} + \|u\|_{V} + \|p\|$. This is sufficient to guarantee the well-posedness \cite{Babuska.I1971}. To summarize:
\begin{theorem}[Arnold, Falk, and Winther~\cite{AFW2010}, Theorem 3.1] The mixed variational problem \eqref{eqn:mixed-hodge-laplacian} on a closed Hilbert complex $(W,d)$ with domain $(V,d)$ is well-posed: the bilinear form $B$ satisfies the inf-sup condition with constant $\gamma$, so for any $F \in  (\f X^{k})^{*}$, there exists a unique solution $(\sigma,u,p)$ to \eqref{eqn:mixed-hodge-laplacian}, i.e., $B(\sigma,u,p;\tau,v,q) = F(\tau,v,q)$ fo all $(\tau,v,q)\in\f X^{k}$, and moreover,
\[
\|(\sigma,u,p)\|_{\f X} \leq  \gamma^{-1} \|F\|_{\f X^{*}}.
\]
The \keyterm{stability constant} $\gamma^{-1}$ depends only on the Poincar\'e constant.
\end{theorem}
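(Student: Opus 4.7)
My plan is to establish the inf-sup condition explicitly by constructing, for each nonzero $(\sigma,u,p)\in\f X^{k}$, a test triple $(\tau,v,q)$ whose pairing under $B$ dominates $\|(\sigma,u,p)\|_{\f X}^{2}$ while itself being bounded by $\|(\sigma,u,p)\|_{\f X}$; the inf-sup constant $\gamma$ will then come out as a function of the Poincar\'e constant $c_{P}$. Once inf-sup is in hand, existence, uniqueness, and the stated norm bound follow from the standard Babu\v{s}ka theorem, so the substantive work is the inf-sup estimate.

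The first step is to invoke the Hodge decomposition on the \emph{domain complex} to write $u = u_{\f B} + u_{\f H} + u_{\perp}$ with $u_{\f B}\in\f B^{k}$, $u_{\f H}\in\f H^{k}$, and $u_{\perp}\in\f Z^{k\perp}$. Since $(W,d)$ is closed, the map $d^{k-1}:\f Z^{(k-1)\perp}\to\f B^{k}$ is a bijection, so there is a unique $\rho\in\f Z^{(k-1)\perp}\subseteq V^{k-1}$ with $d\rho = u_{\f B}$, and the Abstract Poincar\'e Inequality gives $\|\rho\|_{V}\leq c_{P}\|u_{\f B}\|$. Note also that $du = du_{\perp}$, that $u_{\perp}$ is controlled in the full $V$-norm by $\|du\|$ via Poincar\'e, and that $p\in\f H^{k}$ satisfies $dp=0$ and $p\perp\f B^{k}$.

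The second step is the choice of test functions. Informed by which bilinear terms can be made to produce squared norms, I will take
\[
\tau = \sigma - \alpha\rho, \qquad v = u + d\sigma + p, \qquad q = p - u_{\f H},
\]
with a small parameter $\alpha>0$ to be fixed. Substituting into \eqref{eqn:afw-bilin-form} and using $d^{2}=0$, $dp=0$, $\aip{d\sigma,p}=0$, $\aip{u,p}=\aip{u_{\f H},p}$, and $\aip{d\rho,u}=\|u_{\f B}\|^{2}$, the cross terms telescope and I expect an identity of the shape
\[
B(\sigma,u,p;\tau,v,q) \;=\; \|\sigma\|^{2} - \alpha\aip{\sigma,\rho} + \alpha\|u_{\f B}\|^{2} + \|d\sigma\|^{2} + \|du\|^{2} + \|u_{\f H}\|^{2} + \|p\|^{2}.
\]
Then a weighted Cauchy--Schwarz/Young estimate on $\alpha\aip{\sigma,\rho}$, combined with $\|\rho\|\leq c_{P}\|u_{\f B}\|$, lets me choose $\alpha$ (depending only on $c_{P}$) so that both the $\|\sigma\|^{2}$ and $\|u_{\f B}\|^{2}$ coefficients remain strictly positive. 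Invoking Poincar\'e once more to control $\|u_{\perp}\|_{V}$ by $\|du\|$, and recombining $\|u_{\f B}\|^{2}+\|u_{\f H}\|^{2}+\|u_{\perp}\|_{V}^{2}+\|du\|^{2}\gtrsim\|u\|_{V}^{2}$ and $\|\sigma\|^{2}+\|d\sigma\|^{2}=\|\sigma\|_{V}^{2}$, I obtain a lower bound $B(\sigma,u,p;\tau,v,q)\geq C_{1}\|(\sigma,u,p)\|_{\f X}^{2}$ with $C_{1}=C_{1}(c_{P})>0$.

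The third step is the upper bound on the test norm. Using $\|\rho\|_{V}\leq(1+c_{P})\|u_{\f B}\|$, $\|d\sigma\|_{V}=\|d\sigma\|\leq\|\sigma\|_{V}$, $\|p\|_{V}=\|p\|$, and $\|u_{\f H}\|\leq\|u\|_{V}$, the triangle inequality on each component yields $\|(\tau,v,q)\|_{\f X}\leq C_{2}\|(\sigma,u,p)\|_{\f X}$ with $C_{2}=C_{2}(c_{P})$. Dividing gives
\[
\frac{B(\sigma,u,p;\tau,v,q)}{\|(\sigma,u,p)\|_{\f X}\|(\tau,v,q)\|_{\f X}} \;\geq\; \frac{C_{1}}{C_{2}} =: \gamma > 0,
\]
which is the inf-sup condition. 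Finally, because $B$ is essentially symmetric up to sign changes on the $p,q$ slots, an analogous choice (or symmetry of the argument in $(\tau,v,q)$) rules out a nontrivial kernel from the other side, so Babu\v{s}ka's theorem delivers a unique $(\sigma,u,p)\in\f X^{k}$ solving $B(\sigma,u,p;\cdot)=F$ with $\|(\sigma,u,p)\|_{\f X}\leq\gamma^{-1}\|F\|_{\f X^{*}}$.

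The main obstacle, as is typical for mixed inf-sup proofs, is pinpointing the right test triple: one has to guess in advance which bilinear terms will produce each squared norm on the diagonal, and in particular it is not obvious that one must \emph{add} $d\sigma+p$ to $v$ and \emph{subtract} $u_{\f H}$ from $p$ in $q$ in order for the harmonic and coboundary pieces of $u$ to appear with the correct signs. Once these choices are made and the Hodge decomposition of $u$ together with the lifting $\rho$ are invoked, the rest is a routine Cauchy--Schwarz/Young calculation with constants tracked carefully in $c_{P}$.
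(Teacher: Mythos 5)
The paper does not actually prove this statement---it is quoted verbatim from Arnold--Falk--Winther---but your proposal reproduces precisely the canonical argument of \cite{AFW2010} (their Theorem 3.2): the test triple $\tau=\sigma-\alpha\rho$, $v=u+d\sigma+p$, $q=p-u_{\f H}$ with $d\rho=u_{\f B}$, $\rho\in\f Z^{(k-1)\perp}$, and $\alpha\sim c_{P}^{-2}$ is exactly the right choice, your claimed identity for $B(\sigma,u,p;\tau,v,q)$ checks out term by term (using $d^{2}=0$, $dp=0$, $\f H^{k}\perp\f B^{k}$, and the $W$-orthogonality of the Hodge decomposition), and the Young/Poincar\'e bookkeeping yields $\gamma=\gamma(c_{P})$ as required. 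The only soft spot is the closing appeal to ``essential symmetry'' for the transposed nondegeneracy condition in Babu\v{s}ka's theorem, but this is harmless since the identical construction applies with the argument slots interchanged, so the proposal is correct and essentially coincides with the source's proof.
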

Note that the general theory (e.g., \cite{Babuska.I1971,Ev98}) guarantees a unique solution exists for \emph{any} bounded linear functional $F \in (\f X^{k})^{*}$, which in this case with product spaces, means that the problem is still well-posed when there are other nonzero linear functionals on the \textsc{rhs} of \eqref{eqn:mixed-hodge-laplacian} besides $\aip{f,v}$. We shall need this result for parabolic problems, where we assume $u$ has a harmonic part ($P_{\f H} u \neq 0$).
\subsection{Approximation of Hilbert Complexes}\label{sec:approx-hilb-complex}
We now approximate solutions to the abstract mixed Hodge Laplacian problem. To do so, Arnold, Falk, and Winther~\cite{AFW2010} introduce finite-dimensional subspaces $V_{h}\subseteq V$ of the domain complex, such that the inclusion $i_{h} :V_{h} \hookrightarrow V$ is a morphism, i.e. $dV_{h}^{k} \subseteq V_{h}^{k+1}$. With the weak form \eqref{eqn:mixed-hodge-laplacian}, we formulate the Galerkin method by restricting to the subspaces:
\begin{equation}\label{eqn:mixed-hodge-laplacian-discrete}
\begin{tabular}{rll}
$\aip{\sigma_{h}, \tau} - \aip{u_{h}, d\tau}$ & $=0$ & $\forall \tau \in V_{h}^{k-1}$\\[2mm]
$\aip{d\sigma_{h}, v} + \aip{du_{h},dv} + \aip{p_{h},v}$ & $=\aip{f,v}$ & $\forall v\in V_{h}^{k}$\\[2mm]
$\aip{u_{h}, q} $ & $=0$ & $\forall q \in \f H_{h}^{k}$.
\end{tabular}
\end{equation}
We abbreviate by setting $\f X^{k}_{h} := V_{h}^{k-1} \times V_{h}^{k} \times \f H_{h}^{k}$. We must also assume the existence of bounded, surjective, and idempotent (projection) morphisms $\pi_{h}: V\to V_{h}$. It is generally not the orthogonal projection, as that fails to commute with the differentials. As a projection, it gives the following \keyterm{quasi-optimality} result:
\[
\vn[V]{u - \pi_{h} u} = \inf_{v \in V_{h}} \vn[V]{(I-\pi_{h})(u-v)} \leq \mn{I-\pi_{h}} \inf_{v\in V_{h}}\vn[V]{u-v}.
\]
The problem \eqref{eqn:mixed-hodge-laplacian-discrete} is then well-posed, with a Poincar\'e constant given by $c_{P}\mn{\pi_{h}^{k}}$, where $c_{P}$ is the Poincar\'e constant for the continuous problem. This guarantees all the previous abstract results apply to this case. With this, we have the following error estimate:%
\begin{theorem}[Arnold, Falk, and Winther~\cite{AFW2010}, Theorem 3.9]\label{thm:afw-main-mixed-errest}\index{error estimates!for the elliptic problem} Let $(V_{h},d)$ be a family of subcomplexes of the domain  $(V,d)$ of a closed Hilbert complex, parametrized by $h$ and admitting uniformly $V$-bounded cochain projections $\pi_{h}$, and let $(\sigma,u,p) \in \f X^{k}$ be the solution of the continuous problem and $(\sigma_{h},u_{h},p_{h}) \in \f X_{h}^{k}$ be the corresponding discrete solution. Then the following error estimate holds:
\begin{multline}\label{eqn:afw-main-mixed-error-est}
\vn[\f X]{(\sigma - \sigma_{h}, u - u_{h},p-p_{h})} = \vn[V]{\sigma-\sigma_{h}} + \vn[V]{u-u_{h}} + \vn{p-p_{h}} \\
\leq C( \inf_{\tau \in V_{h}^{k-1}} \vn[V]{\sigma-\tau} + \inf_{v \in V_{h}^{k}} \vn[V]{u-v}+ \inf_{q \in V_{h}^{k}} \vn[V]{p-q} + \mu \inf_{v \in V_{h}^{k}} \vn[V]{P_{\f B}u-v})
\end{multline}
with $\mu = \mu_{h}^{k} = \sup_{\substack{r \in \f H^{k}\\ \|r\| = 1}} \left\|\left(I-\pi_{h}^{k}\right)r\right\|$, the operator norm of $I-\pi_{h}^{k}$ restricted to $\f H^{k}$.
\end{theorem}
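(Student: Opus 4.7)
The strategy is a Galerkin analysis for mixed problems strengthened by the FEEC machinery: discrete inf-sup stability coming from the cochain projection $\pi_h$, an approximate Galerkin orthogonality whose consistency defect reflects the mismatch between the continuous and discrete harmonic spaces, and a best-approximation split using the Hodge decomposition. For the discrete stability, because $d\pi_h = \pi_h d$, the truncated sequence $(V_h, d)$ is itself a closed Hilbert subcomplex with harmonic spaces $\f H_h^k$; a discrete Poincar\'e inequality holds with constant proportional to $c_P \|\pi_h\|$, so $B$ satisfies a discrete inf-sup condition on $\f X_h^k \times \f X_h^k$ with stability constant $\gamma_h \geq \gamma_0 > 0$ uniform in $h$. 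This yields well-posedness of \eqref{eqn:mixed-hodge-laplacian-discrete} and control of the discrete $\f X$-norm by dual norms of the data.

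\textbf{Approximate Galerkin orthogonality.} Restricting the continuous identities to $\f X_h^k$ and subtracting the discrete ones yields
\begin{equation*}
B(\sigma - \sigma_h,\, u - u_h,\, p - p_h;\, \tau_h, v_h, q_h) = -\aip{u, q_h}
\end{equation*}
for all $(\tau_h, v_h, q_h) \in \f X_h^k$; the extra term arises because $u$ is only required to be orthogonal to $\f H^k$, not to $\f H_h^k$. I would split the error into an interpolation part $(\sigma - \pi_h \sigma,\, u - \pi_h u,\, p - P_{\f H_h} p)$ plus a discrete part $e_h := (\pi_h \sigma - \sigma_h,\, \pi_h u - u_h,\, P_{\f H_h} p - p_h) \in \f X_h^k$, using the discrete harmonic projection in the third slot since $\pi_h p$ need not lie in $\f H_h^k$. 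Applying the discrete inf-sup to $e_h$ through the error equation above reduces the task to bounding the interpolation part together with the consistency residual $\aip{u, q_h}$.

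\textbf{Best approximation and consistency.} Quasi-optimality of $\pi_h$ gives $\|u - \pi_h u\|_V \leq \|I - \pi_h\| \inf_{v \in V_h^k} \|u - v\|_V$ and the analogous bound for $\sigma$, producing the first two infima in the stated estimate. For the harmonic term, the standard FEEC lemma that $\pi_h P_{\f H}$ is a near-isomorphism $\f H^k \to \f H_h^k$ with defect $\mu$ yields the third infimum. For the consistency residual, since $P_{\f H} u = 0$ and $q_h \in \f Z^k$, the Hodge decomposition gives $q_h - P_{\f H} q_h \in \f B^k$, hence $\aip{u, q_h} = \aip{P_{\f B} u,\, q_h - P_{\f H} q_h}$. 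Combining the $\mu$-estimate $\|q_h - P_{\f H} q_h\| \leq \mu \|q_h\|$ with a subtraction argument that replaces $P_{\f B} u$ by $P_{\f B} u - v$ for any $v \in V_h^k$ produces precisely the term $\mu \inf_{v \in V_h^k} \|P_{\f B} u - v\|_V$.

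\textbf{Main obstacle.} The delicate point is extracting the infimum $\inf_v \|P_{\f B} u - v\|_V$ in the consistency bound, rather than the cruder estimate $\mu \|P_{\f B} u\|$. This hinges on the sharper FEEC identification of $\f H_h^k$ as a $\mu$-perturbation of $\f H^k$ under $\pi_h P_{\f H}$, together with a subtraction step exploiting that suitable $V_h^k$-approximants pair in a controlled way against the residual $q_h - P_{\f H} q_h \in \f B^k$; the remainder of the proof is then routine combination of the preceding stability and quasi-optimality bounds.
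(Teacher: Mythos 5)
Your proposal follows essentially the same route as the paper's own argument (recorded in the proof of Lemma \ref{thm:afw-main-mixed-errest-extended}, which reduces to this statement when $w=0$): discrete inf-sup stability from the bounded cochain projections, the Galerkin error identity with consistency defect $-\aip{u,q_{h}}$, and the Hodge decomposition $u = P_{\f B}u + u_{\perp}$ to convert that defect into the $\mu$-weighted best-approximation term. The only imprecision is the phrase ``replaces $P_{\f B}u$ by $P_{\f B}u - v$ for \emph{any} $v \in V_{h}^{k}$'': the pairing $\aip{v, q_{h}-P_{\f H}q_{h}}$ does not vanish for arbitrary $v$, only for the specific choice $v = \pi_{h}P_{\f B}u$ (which lies in $\f B_{h}^{k}$ and hence in $\f B^{k}$, so it is orthogonal to both $q_{h}\in\f H_{h}^{k}$ and $P_{\f H}q_{h}\in\f H^{k}$), after which quasi-optimality of $\pi_{h}$ recovers the general infimum --- exactly the step taken in the paper.
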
%
\begin{corollary}
If the $V_{h}$ approximate $V$, that is, for all $u \in V$, $\inf_{v\in V_{h}} \vn[V]{u-v} \to 0$ as $h \to 0$, we have convergence of the approximations.
\end{corollary}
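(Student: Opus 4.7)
The plan is to read the conclusion off directly from Theorem~\ref{thm:afw-main-mixed-errest}. The right-hand side of~\eqref{eqn:afw-main-mixed-error-est} is a sum of four best-approximation errors in the $V$-norm (the last one multiplied by the scalar $\mu = \mu_h^k$), so the proof reduces to showing two things: (a) each of the four infimums tends to zero as $h \to 0$, and (b) the prefactor $\mu_h^k$ stays uniformly bounded.

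For (a), observe that the exact-solution components $\sigma \in V^{k-1}$, $u \in V^{k}$, and the harmonic part $p \in \f H^{k} \subseteq V^{k}$ are fixed elements of the continuous domain complex, independent of $h$. The hypothesis ``$V_h$ approximate $V$'' is precisely the statement that for any fixed $w \in V^{j}$ one has $\inf_{v \in V_h^{j}} \vn[V]{w-v} \to 0$, so applying this separately at degrees $k-1$ and $k$ makes the first three infimums in~\eqref{eqn:afw-main-mixed-error-est} vanish in the limit. The fourth argument, $P_{\f B}u$, likewise lies in $\f B^{k} \subseteq V^{k}$ (using closedness of the complex so that $\f B^k$ is a subspace of the domain), hence the same hypothesis gives $\inf_{v \in V_h^k}\vn[V]{P_{\f B}u - v} \to 0$ as well.

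For (b), the theorem's assumption that the cochain projections $\pi_h^k$ are uniformly $V$-bounded yields $\mn{I - \pi_h^k} \leq 1 + \sup_h \mn{\pi_h^k} =: M < \infty$ as an operator on $V$. Since the supremum defining $\mu_h^k$ is taken over unit vectors in $\f H^k \subseteq V^k$, we obtain $\mu_h^k \leq M$ uniformly in $h$. (One can even show $\mu_h^k \to 0$ whenever $\f H^k$ is finite-dimensional, by applying the approximation hypothesis to a fixed basis of $\f H^k$, but uniform boundedness is all that is needed here.) Combining (a) and (b), every term on the right-hand side of~\eqref{eqn:afw-main-mixed-error-est} tends to zero, which gives $\vn[\f X]{(\sigma - \sigma_h, u - u_h, p - p_h)} \to 0$ as $h \to 0$. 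There is no substantive obstacle in this argument; the only point worth checking carefully is that $P_{\f B}u$ is a genuine element of $V^k$ so that the approximation hypothesis can be applied to it, which is where closedness of the complex enters.
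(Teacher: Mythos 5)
Your argument is correct and is exactly the (standard) argument the paper leaves implicit: the corollary is stated without proof, and the intended reasoning is precisely to apply the approximation hypothesis to the fixed elements $\sigma$, $u$, $p$, and $P_{\f B}u$ in the right-hand side of~\eqref{eqn:afw-main-mixed-error-est}, together with the uniform $V$-boundedness of the cochain projections (and the agreement of the $V$- and $W$-norms on $\f H^{k}$) to keep $\mu_h^k$ bounded. No gaps.
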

In general, the harmonic spaces $\f H^{k}$ and $\f H^{k}_{h}$ do not coincide, but they are isomorphic under many circumstances we shall consider (namely, the spaces are isomorphic if for all harmonic forms $q \in \f H^{k}$, the error $\vn{q - \pi_{h}q}$ is at most the norm $\vn{q}$ itself \cite[Theorem 3.4]{AFW2010}, and it \emph{always} holds for the de Rham complex). For a quantitative estimate relating the two different kinds of harmonic forms, we have the following
\begin{theorem}[\cite{AFW2010},Theorem 3.5]\label{thm:harmonic-approx} Let $(V,d)$ be a bounded, closed Hilbert complex, $(V_{h},d)$ a Hilbert subcomplex, and $\pi_{h}$ a bounded cochain projection. Then
\begin{align}
\vn[V]{(I-P_{\f H_{h}})q} &\leq \vn[V]{(I-\pi_{h}^{k})q}, \forall q \in \f H^{k}\\
\vn[V]{(I-P_{\f H})q} &\leq \vn[V]{(I-\pi_{h}^{k})P_{\f H}q}, \forall q \in \f H_{h}^{k}.
\end{align}
\end{theorem}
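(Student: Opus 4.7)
The plan is to work entirely in the $W$-inner product, since every vector appearing in the two inequalities lies in $\f Z^k$, where the $V$- and $W$-norms coincide. Both estimates will then follow from Pythagorean identities built from the two $W$-orthogonal Hodge decompositions $\f Z^k = \f B^k \oplus \f H^k$ (valid because the complex is closed) and $\f Z_h^k = \f B_h^k \oplus \f H_h^k$, combined with the cochain property, which in particular implies $\pi_h(\f B^k) \subseteq \f B_h^k$.

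For inequality (i), fix $q \in \f H^k$. Since $\pi_h$ commutes with $d$ and $dq = 0$, we have $\pi_h q \in \f Z_h^k$; decompose it as $\pi_h q = h + b$ with $h \in \f H_h^k$ and $b \in \f B_h^k$. The vectors $q - h$ and $b$ are $W$-orthogonal: $q \perp_W \f B^k \supseteq \f B_h^k \ni b$ because $q \in \f H^k$, and $h \perp_W b$ inside the discrete decomposition. Hence $\|q - \pi_h q\|^2 = \|q - h\|^2 + \|b\|^2 \geq \|q - h\|^2$. Because $P_{\f H_h} q$ is the $W$-best approximation to $q$ in $\f H_h^k$, we conclude $\|q - P_{\f H_h} q\| \leq \|q - h\| \leq \|q - \pi_h q\|$, and the $V$-norm version is immediate from $q - P_{\f H_h} q, q - \pi_h q \in \f Z^k$.

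For inequality (ii), let $q \in \f H_h^k$ and write $r = P_{\f H} q \in \f H^k$. Applying the continuous Hodge decomposition to $q \in \f Z^k$ yields $q = P_{\f B} q + r$, so $r - q = -P_{\f B} q \in \f B^k$. The cochain property sends $\f B^k$ into $\f B_h^k$, and since $q \in V_h^k$ gives $\pi_h q = q$, we obtain the crucial structural identity
\[
\pi_h r - q \;=\; \pi_h(r - q) \;\in\; \f B_h^k.
\]
Because $r \in \f H^k$ is $W$-orthogonal to $\f B^k$ and $q \in \f H_h^k$ is $W$-orthogonal to $\f B_h^k$, the element $\pi_h r - q$ is $W$-orthogonal to both $r$ and $q$, so $\aip{r - q, \pi_h r - q} = 0$. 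Splitting $r - q = (r - \pi_h r) + (\pi_h r - q)$ and pairing with $r - q$ therefore gives $\|r - q\|^2 = \aip{r - q, r - \pi_h r}$, and Cauchy--Schwarz delivers $\|r - q\| \leq \|r - \pi_h r\|$. Passage to $V$-norms is again automatic since $r - q$ and $r - \pi_h r$ both lie in $\f B^k \subseteq \f Z^k$.

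The main obstacle is discovering the identity $\pi_h r - q \in \f B_h^k$ in part (ii). Securing it requires coordinating three ingredients at once: the continuous Hodge decomposition, which rewrites $r - q$ as an honest coboundary; the idempotence of $\pi_h$ on $V_h^k$, which allows the $q$-term to be absorbed as $\pi_h q$; and the cochain property, which pushes the resulting coboundary into the discrete image $\f B_h^k$. Once this structural fact is in hand, the estimate itself collapses to a single application of Cauchy--Schwarz.
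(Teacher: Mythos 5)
Your proof is correct; the paper itself imports this result from Arnold--Falk--Winther (Theorem 3.5 of \cite{AFW2010}) without reproving it, and your argument is essentially the standard one there: reduce everything to the $W$-norm on $\f Z^{k}$, use the two Hodge decompositions together with $\pi_{h}(\f B^{k})\subseteq \f B_{h}^{k}$ and $\pi_{h}|_{V_{h}}=\operatorname{id}$, and conclude by best approximation in part (i) and by orthogonality plus Cauchy--Schwarz (or, equivalently, Pythagoras applied to $r-\pi_{h}r=(r-q)+(q-\pi_{h}r)$) in part (ii). One cosmetic correction: in the last sentence of part (ii), $r-\pi_{h}r$ need not lie in $\f B^{k}$ (only $r-q=-P_{\f B}q$ does); what you actually need, and what clearly holds, is that both vectors lie in $\f Z^{k}$, where the $V$- and $W$-norms coincide.
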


\subsection{Removing the Subcomplex Assumption: Variational Crimes}
For geometric problems, it is essential to remove the requirement that the approximating complex $V_{h}$ actually be subspaces of $V$. This is motivated by the example of approximating planar domains with curved boundaries by piecewise-linear approximations, resulting in finite element spaces that lie in a different function space \cite{Brae07}. Holst and Stern~\cite{HoSt10a} extend the Arnold, Falk, Winther~\cite{AFW2010} framework by supposing that $i_{h} : V_{h} \hookrightarrow V$ is an injective morphism which is not necessarily inclusion; they also require projection morphisms $\pi_{h} : V\to V_{h}$ with the property $\pi_{h} \circ i_{h} = \operatorname{id}$, which replaces the idempotency requirement of the preceding case. To summarize, given $(W,d)$ a Hilbert complex with domain $(V,d)$, $(W_{h},d_{h})$ another complex (whose inner product we denote $\aip[h]{\cdot,\cdot}$) with domain $(V_{h},d_{h})$, injective morphisms $i_{h} : W_{h}\hookrightarrow W$, and finally, projection morphisms $\pi_{h}: V \to V_{h}$. We then have the following generalized Galerkin problem:

\begin{equation}\label{eqn:mixed-hodge-laplacian-discrete-vcs}
\begin{tabular}{rll}
$\aip[h]{\sigma_{h}, \tau_{h}} - \aip[h]{u_{h}, d_{h}\tau_{h}}$ & $=0$ & $\forall \tau_{h} \in V_{h}^{k-1}$\\[2mm]
$\aip[h]{d_{h}\sigma_{h}, v_{h}} + \aip[h]{d_{h}u_{h},d_{h}v_{h}} + \aip[h]{p_{h},v_{h}}$ & $=\aip[h]{f_{h},v_{h}}$ & $\forall v_{h}\in V_{h}^{k}$\\[2mm]
$\aip[h]{u_{h}, q_{h}} $ & $=0$ & $\forall q_{h} \in \f H_{h}^{k}$,
\end{tabular}
\end{equation}
where $f_{h}$ is some interpolation of the given data $f$ into the space $W_{h}$ (we will discuss various choices of this operator later). This gives us a  bilinear form
\begin{multline}\label{eqn:hs-64-bilin-form}
B_{h}(\sigma_{h},u_{h},p_{h};\tau_{h},v_{h},q_{h}) :=\aip[h]{\sigma_{h}, \tau_{h}} - \aip[h]{u_{h}, d_{h}\tau_{h}} \\
+\aip[h]{d_{h}\sigma_{h}, v_{h}} + \aip[h]{d_{h}u_{h},d_{h}v_{h}} + \aip[h]{p_{h},v_{h}} 
-\aip[h]{u_{h},q_{h}}.
\end{multline}
This problem is well-posed, which again follows from the abstract theory as long as the complex is closed, and there is a corresponding Poincar\'e inequality:
\begin{theorem}[Holst and Stern~\cite{HoSt10a}, Theorem 3.5 and Corollary 3.6]\label{thm:disc-vcs-wellposed-poincare} Let $(V,d)$ and $(V_{h},d_{h})$ be bounded closed Hilbert complexes, with morphisms $i_{h} : V_{h} \hookrightarrow V$ and $\pi_{h} : V \to V_{h}$ such that $\pi_{h} \circ i_{h} = \operatorname{id}$. Then for all $v_{h} \in \f Z_{h}^{k\perp}$, we have 
\[
\|v_{h}\|_{V_{h}} \leq c_{P}\bvn{\pi_{h}^{k}}\bvn{i_{h}^{k+1}}\vn[V_{h}]{d_{h} v_{h}},
\]
where $c_{P}$ is the Poincar\'e constant corresponding to the continuous problem. If $(V,d)$ and $(V_{h},d_{h})$ are the domain complexes of closed complexes $(W,d)$ and $(W_{h},d_{h})$, then $\vn[V_{h}]{d_{h} v_{h}}$ is simply $\vn[h]{d_{h} v_{h}}$ (since it is the graph norm and $d^{2} = 0$).
\end{theorem}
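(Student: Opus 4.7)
The plan is to prove the discrete Poincaré inequality by transferring $v_h$ to the continuous complex via $i_h$, applying the continuous Poincaré inequality there, and pulling back via $\pi_h$; the Hodge decomposition provides the bridge, and the identity $\pi_h \circ i_h = \operatorname{id}$ closes the loop.

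First, given $v_h \in \f Z_h^{k\perp}$, I would consider the image $i_h^k v_h \in V^k$ and apply the $V$-orthogonal Hodge decomposition to write
\[
i_h^k v_h = y + y^{\perp}, \qquad y \in \f Z^k,\ y^{\perp} \in \f Z^{k\perp}.
\]
Applying $\pi_h^k$ and invoking $\pi_h \circ i_h = \operatorname{id}$ yields $v_h = \pi_h^k y + \pi_h^k y^{\perp}$, so that $\pi_h^k y^{\perp} = v_h - \pi_h^k y$. Because $\pi_h$ is a morphism of complexes, $d_h \pi_h^k y = \pi_h^{k+1} d^k y = 0$; hence $\pi_h^k y \in \f Z_h^k$.

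Next I would establish the key orthogonality $v_h \perp_{V_h} \pi_h^k y$. Since $v_h \in \f Z_h^{k\perp}$ and $\pi_h^k y \in \f Z_h^k$, the $W_h$-pairing $\aip[h]{v_h, \pi_h^k y}$ vanishes, and since $d_h \pi_h^k y = 0$, the graph-norm correction $\aip[h]{d_h v_h, d_h \pi_h^k y}$ vanishes as well, so $\aip[V_h]{v_h, \pi_h^k y} = 0$. Pythagoras applied to $v_h = \pi_h^k y + \pi_h^k y^{\perp}$ then gives
\[
\|\pi_h^k y^{\perp}\|_{V_h}^2 = \|v_h\|_{V_h}^2 + \|\pi_h^k y\|_{V_h}^2 \geq \|v_h\|_{V_h}^2.
\]

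The remaining steps are quantitative. Bound the left-hand side using the operator norm of $\pi_h^k : V^k \to V_h^k$: $\|\pi_h^k y^{\perp}\|_{V_h} \leq \|\pi_h^k\| \, \|y^{\perp}\|_V$. Since $y^{\perp} \in \f Z^{k\perp}$, the continuous abstract Poincaré inequality gives $\|y^{\perp}\|_V \leq c_P \|d y^{\perp}\|_V$. Now, $d y = 0$ and the morphism property $d \circ i_h = i_h \circ d_h$ imply
\[
d y^{\perp} = d(i_h^k v_h) = i_h^{k+1} d_h v_h,
\]
and for closed complexes $\|d y^{\perp}\|_V = \|d y^{\perp}\|_W$, so $\|d y^{\perp}\|_V \leq \|i_h^{k+1}\| \, \|d_h v_h\|_{V_h}$. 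Chaining the three bounds yields $\|v_h\|_{V_h} \leq c_P \|\pi_h^k\| \|i_h^{k+1}\| \|d_h v_h\|_{V_h}$, which is the desired estimate.

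The main obstacle I anticipate is handling the orthogonality carefully in the $V_h$-inner product rather than the $W_h$-inner product: one must observe that $\pi_h^k y$ lies in $\f Z_h^k$, which makes the graph-norm contribution automatically vanish. Beyond that, the argument is essentially bookkeeping, but one should remain careful to distinguish $V$- from $W$-norms in the passage from $y^{\perp}$ to $dy^{\perp}$ (where closedness of the complex guarantees they agree on images).
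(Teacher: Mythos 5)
Your proof is correct and follows essentially the same route as the original Holst--Stern argument that the paper cites (the paper itself only states this result without reproducing the proof): decompose $i_h^k v_h$ via the continuous Hodge decomposition, use $\pi_h\circ i_h=\operatorname{id}$ and the morphism property to see that $v_h$ is $V_h$-orthogonal to $\pi_h^k y$, and then chain the operator-norm bounds with the continuous Poincar\'e inequality. The only cosmetic remark is that the identity $\vn[V]{dy^{\perp}}=\vn{dy^{\perp}}$ follows from $d^2=0$ (so the graph-norm correction vanishes) rather than from closedness of the complex, exactly as noted in the final sentence of the theorem statement.
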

In other words, the norm of the injective morphisms $i_{h}$ also contributes to the stability constant for this discrete problem. Analysis of this method results in two additional error terms (along with now having to explicitly reference the injective morphisms $i_{h}$ which may no longer be inclusions), due to the inner products in the space $V_{h}$ no longer necessarily being the restriction of that in $V$: the need to approximate the data $f$, and the failure of the morphisms $i_{h}$ to be unitary. 
\begin{theorem}[Holst and Stern~\cite{HoSt10a}, Corollary 3.11]\label{thm:main-hs-64-errest}\index{error estimates!for the elliptic problem!for variational crimes} Let $(V,d)$ be the domain complex of a closed Hilbert complex $(W,d)$, and $(V_{h},d_{h})$ the domain complex of $(W_{h},d_{h})$ with morphisms $i_{h} : W_{h}\to W$ and $\pi_{h} : V \to V_{h}$ as above. Then if we have solutions $(\sigma,u,p)$ and $(\sigma_{h},u_{h},p_{h})$ to \eqref{eqn:mixed-hodge-laplacian} and \eqref{eqn:mixed-hodge-laplacian-discrete-vcs} respectively, the following error estimate holds:
\begin{multline}\label{eqn:main-hs-64-errest}
\vn[V]{\sigma-i_{h}\sigma_{h}} + \vn[V]{u-i_{h}u_{h}} + \vn{p-i_{h}p_{h}} \\
\leq C( \inf_{\tau \in i_{h}V_{h}^{k-1}} \vn[V]{\sigma-\tau} + \inf_{v \in i_{h}V_{h}^{k}} \vn[V]{u-v}+ \inf_{q \in i_{h}V_{h}^{k}} \vn[V]{p-q} + \mu \inf_{v \in i_{h}V_{h}^{k}} \vn[V]{P_{\f B}u-v}\\
+ \vn[h]{f_{h} - i_{h}^{*} f} + \mn{I-J_{h}}\,\vn{f}),
\end{multline}
where $J_{h} = i_{h}^{*} i_{h}$, and $\mu = \mu_{h}^{k} = \sup\limits_{\substack{r \in \f H^{k} \\ \vn{r} = 1}}\left\|\left(I - i_{h}^{k} \pi_{h}^{k} \right)r\right\|$.
\end{theorem}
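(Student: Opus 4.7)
The strategy is a Strang-type decomposition: insert an auxiliary intermediate solution living in the genuine subcomplex $i_h V_h \subseteq V$, bound its distance to the continuous solution via the clean theorem \ref{thm:afw-main-mixed-errest}, then bound its distance to $i_h(\sigma_h, u_h, p_h)$ by an inf-sup argument in which the variational crimes appear as consistency defects. Define $(\tilde{\sigma}_h, \tilde{u}_h, \tilde{p}_h) \in i_h V_h$ to be the solution of the mixed Hodge Laplacian problem \eqref{eqn:mixed-hodge-laplacian} with test and trial spaces restricted to $i_h V_h$, retaining the original bilinear form $B$ and right-hand side $\aip{f, \cdot}$. Because $i_h$ is an injective morphism with $\pi_h \circ i_h = \operatorname{id}$, the operator $i_h \circ \pi_h : V \to V$ is idempotent onto $i_h V_h$ and commutes with $d$, hence is a bounded cochain projection onto the subcomplex $i_h V_h$. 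Applying Theorem \ref{thm:afw-main-mixed-errest} to this subcomplex yields
\begin{equation*}
\vn[V]{\sigma - \tilde{\sigma}_h} + \vn[V]{u - \tilde{u}_h} + \vn{p - \tilde{p}_h} \leq C\bigl(T_1 + T_2 + T_3 + \mu\, T_4\bigr),
\end{equation*}
where $T_1, T_2, T_3, T_4$ denote the four infima on the right-hand side of \eqref{eqn:main-hs-64-errest}, and $\mu$ reduces to the stated $\mu_h^k$ after identifying the harmonic space of $i_h V_h$ via $i_h \pi_h$.

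Next, bound the remaining piece $(\tilde{\sigma}_h - i_h \sigma_h, \tilde{u}_h - i_h u_h, \tilde{p}_h - i_h p_h)$, which lies entirely in the subcomplex $i_h V_h$. By the inf-sup condition for $B$ on $i_h V_h$ (a pullback consequence of Theorem \ref{thm:disc-vcs-wellposed-poincare}), it suffices to bound, uniformly in $(\tau_h, v_h, q_h) \in \f X_h^k$ of unit norm, the residual
\begin{equation*}
B(i_h \sigma_h, i_h u_h, i_h p_h;\, i_h \tau_h, i_h v_h, i_h q_h) - \aip{f, i_h v_h}.
\end{equation*}
The discrete Galerkin identity $B_h(\sigma_h, u_h, p_h; \tau_h, v_h, q_h) = \aip[h]{f_h, v_h}$ allows us to rewrite this residual as the sum of two consistency defects. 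For the bilinear forms, applying the identity $\aip{i_h a, i_h b} = \aip[h]{J_h a, b}$ pairing-by-pairing to $B$ expresses the difference $B(i_h \cdot;\, i_h \cdot) - B_h(\cdot;\, \cdot)$ as a sum of terms of the form $\aip[h]{(J_h - I)\,\cdot,\, \cdot}$, each bounded by $\vn{I - J_h}$ times norms of $(\sigma_h, u_h, p_h)$ and $(\tau_h, v_h, q_h)$; the discrete a priori bound combined with $\vn[h]{f_h} \leq \vn[h]{f_h - i_h^* f} + \vn{i_h^*}\vn{f}$ then yields the $\vn{I - J_h}\,\vn{f}$ contribution. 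For the linear functionals, $\aip{f, i_h v_h} - \aip[h]{f_h, v_h} = \aip[h]{i_h^* f - f_h, v_h}$ by definition of the adjoint, contributing exactly $\vn[h]{f_h - i_h^* f}$. A triangle inequality combining both steps delivers \eqref{eqn:main-hs-64-errest}.

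The principal obstacle is bookkeeping the $h$-uniformity of all constants. One must verify that the Poincar\'e constant and cochain projection norm governing inf-sup stability on the subcomplex $i_h V_h \subset V$ are controlled by the assumed bounds $\vn{\pi_h}, \vn{i_h} \leq C$, and that the harmonic parameter emerging from Theorem \ref{thm:afw-main-mixed-errest} (built from the projection $i_h \pi_h$ acting on the harmonic space of $i_h V_h$) genuinely coincides with, or is controlled by, the $\mu_h^k$ stated in \eqref{eqn:main-hs-64-errest}. This last identification leans on Theorem \ref{thm:harmonic-approx} and the already-tracked quantity $\vn{I - J_h}$, so no new error sources are introduced and the clean estimate \eqref{eqn:main-hs-64-errest} follows.
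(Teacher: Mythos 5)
Your strategy is essentially the paper's: the intermediate solution you define on the genuine subcomplex $i_h V_h$ (original bilinear form $B$, right-hand side $\aip{f,\cdot}$) is exactly the paper's $(\sigma_h',u_h',p_h')$ pushed forward by the isometry $i_h$ from $W_h$ with the modified inner product $\aip[h]{J_h\cdot,\cdot}$; the first half is Theorem \ref{thm:afw-main-mixed-errest} applied to that subcomplex with cochain projection $i_h\pi_h$; and the second half is an inf-sup/consistency-defect argument in which the identity $\aip{i_h a, i_h b} = \aip[h]{J_h a, b}$ produces the $\mn{I-J_h}\,\vn{f}$ term and the functional mismatch produces $\vn[h]{f_h - i_h^* f}$. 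That organization is sound.

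There is, however, one concrete gap in the second step: the two discrete problems do not share a harmonic space. The harmonic space of the subcomplex $i_h V_h\subseteq W$ is $i_h\f H_h^{\prime k}$, where $\f H_h^{\prime k}$ is defined by orthogonality in the \emph{modified} inner product, and this generally differs from $i_h\f H_h^{k}$ because the two inner products induce different Hodge decompositions (different adjoints $d_h^{*\prime}$ versus $d_h^{*}$). Consequently the difference triple $(\tilde\sigma_h - i_h\sigma_h,\ \tilde u_h - i_h u_h,\ \tilde p_h - i_h p_h)$ does \emph{not} lie in the product space $i_h V_h^{k-1}\times i_h V_h^{k}\times i_h\f H_h^{\prime k}$ on which the inf-sup condition for $B$ restricted to the subcomplex holds (its third component straddles the two harmonic spaces), and likewise your test triples $(i_h\tau_h, i_h v_h, i_h q_h)$ with $q_h\in\f H_h^{k}$ are not admissible test functions for that inf-sup condition. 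The paper repairs this by extending the bilinear forms in the last argument to all of $V_h^{k}$, invoking inf-sup only on the non-extended versions, comparing against $q = P_{\f H_h} p_h'$ rather than $p_h'$ itself, and estimating the resulting discrepancies $\vn[h]{p_h' - P_{\f H_h}p_h'} = \vn[h]{P_{\f B_h}(I-J_h)p_h'}$ and $\vn[h]{P_{\f H_h}u_h'} = \vn[h]{P_{\f H_h}(I-J_h)u'_{\perp}}$, both of which are again of size $\mn{I-J_h}\,\vn{f}$, so no new error terms appear in the final bound. Your proof needs an analogous reconciliation; as written, the inf-sup step does not apply. (Your closing remark about identifying harmonic spaces addresses only the parameter $\mu$ arising in the first half via Theorem \ref{thm:harmonic-approx}, not this mismatch in the second half.)
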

The extra terms (in the third line of the inequality) are analogous the terms described in the Strang lemmas \cite[\S III.1]{Brae07}. The main idea of the proof of Theorem \ref{thm:main-hs-64-errest} (which we will recall in more detail below, because we will need to prove a generalization of it as part of our main results) is to form an intermediate complex by pulling the inner products in the complex $(W,d)$ back to $(W_{h},d_{h})$ back by $i_{h}$, construct a solution to the problem there, and compare that solution with the solution we want. This modified inner product does not coincide with the given one on $W_{h}$ precisely when $i_{h}$ is not unitary:
\[
\aip[i_{h}^{*} W]{v,w} = \aip[h]{i_{h} v, i_{h} w} = \aip[h]{i_{h}^{*} i_{h} v, w} = \aip[h]{J_{h}v,w}.
\]
Unitarity is simply the condition $J_{h} = I$. The complex $W_{h}$ with the modified inner product now may be identified with a true subcomplex of $W$, for which the theory of \cite{AFW2010} directly applies, yielding a solution $(\sigma'_{h},u'_{h},p'_{h}) \in V_{h}^{k-1}\times V_{h}^{k} \times \f{H}_{h}^{\prime k}$, where $\f{H}_{h}^{\prime k}$ is the discrete harmonic space associated to the space with the modified inner product. This generally does not coincide with the discrete harmonic space $\f H^{k}_{h}$, since the discrete codifferential $d^{*\prime}_{h}$ in that case is defined to be the adjoint with respect to the modified inner product, yielding a different Hodge decomposition. The estimate of $\vn[V]{i_{h} \sigma_{h}'-\sigma} + \vn[V]{i_{h} u_{h}' - u} + \vn{i_{h}p'_{h}- p}$ then proceeds directly from the preceding theory for subcomplexes \eqref{eqn:afw-main-mixed-error-est}.  The variational crimes, on the other hand, arise from comparing the solution $(\sigma_{h},u_{h},p_{h})$ with $(\sigma_{h}',u_{h}',p_{h}')$. Finally, the error estimate \eqref{eqn:main-hs-64-errest} proceeds by the triangle inequality (and the boundedness of the morphisms $i_{h}$).

%%%%%%%%%%%%%%%%%%%%%%%%%%%%%%%%%%%%%%%%%%%%%%%%%%%%%
%%%%%%%%%%%%%%%%%%%%%%%%%%%%%%%%%%%%%%%%%%%%%%%%%%%%%
%%
%%  First Main Result: Extension of Elliptic Error Estimates
%%
%%%%%%%%%%%%%%%%%%%%%%%%%%%%%%%%%%%%%%%%%%%%%%%%%%%%%
%%%%%%%%%%%%%%%%%%%%%%%%%%%%%%%%%%%%%%%%%%%%%%%%%%%%%
\subsection{Elliptic Error Estimates for a Nonzero Harmonic Part}\label{sec:our-extension}
Our objective in the remainder of this section is to prove one of our main results, a generalization of Theorem \ref{thm:main-hs-64-errest} which allows the possibility of the solution $u$ having a nonzero harmonic part $w$. We first need a few lemmas.

\begin{lemma}\label{thm:afw-main-mixed-errest-extended} Theorem \ref{thm:afw-main-mixed-errest} continues to apply when we have $\aip{u,q} = \aip{w,q}$ for all $q \in \f H^{k}$, where $w \in \f H^{k}$ is prescribed (i.e., $P_{\f H} u = w$, which may generally not be zero).
\end{lemma}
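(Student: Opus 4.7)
The plan is to reduce the prescribed-harmonic-part problem to the zero-harmonic-part problem of Theorem~\ref{thm:afw-main-mixed-errest} by subtracting off the harmonic components on both the continuous and discrete sides. Set $\tilde u := u - w$. Since $w \in \f H^{k}$, we have $dw = 0$, and by the Hodge decomposition $\f H^{k} \perp \f B^{k}$, so $\aip{w, d\tau} = 0$ for every $\tau \in V^{k-1}$. Substituting $u = \tilde u + w$ into the three equations of the modified mixed system then shows that $(\sigma, \tilde u, p)$ satisfies the original mixed Hodge Laplacian problem \eqref{eqn:mixed-hodge-laplacian} with the same source $f$ and the standard harmonic constraint $P_{\f H}\tilde u = 0$. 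On the discrete side, letting $w_{h} := P_{\f H_{h}^{k}} w \in \f H_{h}^{k}$ and $\tilde u_{h} := u_{h} - w_{h}$, the same algebra, using $d w_{h} = 0$ and $w_{h} \perp \f B_{h}^{k}$, shows that $(\sigma_{h}, \tilde u_{h}, p_{h})$ solves \eqref{eqn:mixed-hodge-laplacian-discrete}.

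Next, I would apply Theorem~\ref{thm:afw-main-mixed-errest} directly to the tilded triples, yielding the standard estimate for $\vn[V]{\sigma - \sigma_{h}} + \vn[V]{\tilde u - \tilde u_{h}} + \vn{p - p_{h}}$ controlled by best approximations of $\sigma$, $\tilde u$, $p$, and $P_{\f B}\tilde u$. The estimate for $u - u_{h}$ then follows from the triangle inequality
\[
\vn[V]{u - u_{h}} \leq \vn[V]{\tilde u - \tilde u_{h}} + \vn[V]{w - w_{h}},
\]
the last term being bounded by Theorem~\ref{thm:harmonic-approx}; since the $V$- and $W$-norms coincide on $\f H^{k}$, this harmonic approximation error is of the same order as the $\mu$-type quantity already appearing in \eqref{eqn:afw-main-mixed-error-est}.

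Finally I would translate the infima involving $\tilde u$ back to infima involving $u$. Because $P_{\f B}\tilde u = P_{\f B} u$ (harmonic forms are $\f B$-orthogonal), the term $\inf_{v} \vn[V]{P_{\f B}\tilde u - v}$ is unchanged. For $\inf_{v \in V_{h}^{k}} \vn[V]{\tilde u - v}$, replacing a test function $v$ by $v + w_{h}$ and invoking the triangle inequality gives
\[
\inf_{v \in V_{h}^{k}} \vn[V]{\tilde u - v} \leq \inf_{v \in V_{h}^{k}} \vn[V]{u - v} + \vn[V]{w - w_{h}},
\]
and the extra term is again absorbable into the harmonic-approximation contribution on the right-hand side of \eqref{eqn:afw-main-mixed-error-est}.

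The main delicate point I expect is the compatibility of the discrete harmonic constraint: one must choose $w_{h}$ simultaneously in $\f H_{h}^{k}$ and close to $w$ in the $V$-norm, and then verify that the resulting harmonic error genuinely fits into the existing $\mu$-type term rather than enlarging the estimate in an essential way. Both checks reduce, via Theorem~\ref{thm:harmonic-approx}, to the quantitative comparison of continuous and discrete harmonic spaces already available in the variational-crime framework, so no new abstract ingredient beyond what is stated above should be required.
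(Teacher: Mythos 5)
Your reduction is a genuinely different route from the paper's: you subtract the harmonic parts on both sides and invoke Theorem~\ref{thm:afw-main-mixed-errest} for the shifted solutions, whereas the paper reruns the inf-sup argument of \cite{AFW2010} directly on $(\sigma,u,p)$ and $(\sigma_{h},u_{h},p_{h})$, tracking the consistency defect $\aip{u,q_{h}}-\aip{w,q_{h}}=\aip{P_{\f H_{h}}(u-P_{\f H}u),q_{h}}$ through the bilinear form. The algebra of your reduction is correct: $(\sigma,u-w,p)$ does solve \eqref{eqn:mixed-hodge-laplacian}, and with $w_{h}=P_{\f H_{h}}w$ (which is forced, since the discrete constraint reads $P_{\f H_{h}}u_{h}=P_{\f H_{h}}w$), the triple $(\sigma_{h},u_{h}-w_{h},p_{h})$ does solve \eqref{eqn:mixed-hodge-laplacian-discrete}.

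The gap is in the final step, where you claim the leftover term $\vn[V]{w-w_{h}}=\vn[V]{(I-P_{\f H_{h}})w}$ is absorbable into the harmonic-approximation contribution of \eqref{eqn:afw-main-mixed-error-est}. The only $\mu$-type quantity there is $\mu\inf_{v}\vn[V]{P_{\f B}u-v}$, which involves the \emph{coboundary} part of $u$ and vanishes, for instance, whenever $P_{\f B}u=0$; your leftover term instead involves the \emph{harmonic} part and is bounded (via Theorem~\ref{thm:harmonic-approx}, the identity of the $V$- and $W$-norms on cocycles, and quasi-optimality) only by $\mu\,\vn{w}$ or by $C\inf_{v\in V_{h}^{k}}\vn[V]{w-v}$ --- a single factor of smallness times the fixed size $\vn{w}$, or a best-approximation error for $w$ itself --- neither of which appears on the right-hand side of \eqref{eqn:afw-main-mixed-error-est}, and neither of which is dominated in general by the terms that do. The same leftover term also contaminates your conversion of $\inf_{v}\vn[V]{(u-w)-v}$ back to $\inf_{v}\vn[V]{u-v}$. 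So your argument proves the estimate of Theorem~\ref{thm:afw-main-mixed-errest} \emph{plus} an extra term $C\inf_{v\in V_{h}^{k}}\vn[V]{P_{\f H}u-v}$, which is strictly weaker than the Lemma as stated (though it would still suffice for the later Theorem~\ref{thm:main-estimate-nonzero-harmonic}, whose right-hand side contains $\inf_{\xi}\vn[V]{w-\xi}$ anyway). The paper avoids the issue precisely by never separating $w$ from $u$: the defect entering the inf-sup bound is $\vn{P_{\f H_{h}}(u-P_{\f H}u)}$, whose argument has no harmonic component, so the original argument of \cite{AFW2010} bounds it by $\mu\inf_{v}\vn[V]{P_{\f B}u-v}$ with no new terms.
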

\begin{proof}

We closely follow the proof, in \cite{AFW2010}, of Theorem \ref{thm:afw-main-mixed-errest} above, noting where the modifications must occur. Let $B$ be the bounded bilinear form \eqref{eqn:afw-bilin-form}; then $(\sigma,u,p)$ satisfies, for all $(\tau_{h},v_{h},q_{h}) \in \f X^{k}_{h}$,
\[
B(\sigma,u,p;\tau_{h},v_{h},q_{h}) = \aip{f,v_{h}} - \aip{u,q_{h}}.
\]
We $V$-orthogonally project $(\sigma, u, p)$ in each factor to $(\tau, v, q) \in \f X^{ k}_{h}$. Then for any $(\tau_{h},v_{h},q_{h}) \in \f X^{k}_{h}$,
\begin{multline}\label{eqn:afw-bilin-derivation}	
B(\sigma_{h}-\tau,u_{h}-v,p_{h}-q; \tau_{h},v_{h},q_{h})\\
= B(\sigma-\tau,u-v,p-q;\tau_{h},v_{h},q_{h}) + \aip{u,q_{h}} - \aip{w, q_{h}}\\
= B(\sigma-\tau,u-v,p-q;\tau_{h},v_{h},q_{h}) + \aip{P_{\f H_{h}}(u -w),q_{h}}\\
\leq C\left(\vn[V]{\sigma-\tau} + \vn[V]{u-v} +\vn{p-q} + \bvn{P_{\f H_{h}}(u - w)}\right)(\vn[V]{\tau_{h}} + \vn[V]{v_{h}} + \vn{q_{h}}).
\end{multline}
Noticing that the factor $p_{h}-q$ in the bilinear form above is in the original domain $\f H^{ k}_{h}$, we can now choose the appropriate $(\tau_{h},v_{h},q_{h})$ that verifies $\inf$-$\sup$ condition of $B$:
\begin{multline*}
B(\sigma_{h}-\tau,u_{h}-v,p_{h}-q; \tau_{h},v_{h},q_{h}) \\
\geq \gamma (\vn[V]{\sigma_{h}-\tau} + \vn[V]{u_{h}-v} + \vn{p_{h}-q})(\vn[V]{\tau_{h}} + \vn[V]{v_{h}} + \vn{q_{h}}).
\end{multline*}
Comparing this to \eqref{eqn:afw-bilin-derivation} above, we may cancel the common factor, and divide by $\gamma$ to arrive at
\begin{multline}
\vn[V]{\sigma_{h}-\tau} + \vn[V]{u_{h}-v} + \vn{p_{h}-q}\\ \leq C\gamma^{-1}\left(\vn[V]{\sigma-\tau} + \vn[V]{u-v} +\vn{p-q} + \bvn{P_{\f H_{h}}(u - w)}\right).
\end{multline}
This differs (aside from the notation) from \cite{AFW2010} in that we now have, rather than $P_{\f H_{h}} u$, instead $P_{\f H_{h}} (u -w)$, with the harmonic part subtracted off. Removing the harmonic part allows us to continue as in \cite{AFW2010}: the Hodge decomposition $u-w=u -  P_{\f H} u$ consists only of coboundary and perpendicular terms $u_{\f B} + u_{\perp} \in \f B^{k} \oplus \f Z^{k\perp}$. With $\f H^{ k}_{h}$ contained in $\f Z^{k}$, it follows $P_{\f H_{h}} u_{\perp} = 0$, and $P_{\f H_{h}}\pi_{h} u_{\f B} = 0$. Also, $(I-\pi_{h})u_{\f B}$ is perpendicular to $\f H^{k}$. Therefore, for all $q \in \f H^{k}_{h}$, 
\begin{multline*}
\aip{P_{\f H_{h}} (u - P_{\f H} u),q} = \aip{P_{\f H_{h}} u_{\f B},q}= \aip{P_{\f H_{h}} (u_{\f B} - \pi_{h} u_{\f B}),q}\\ =\aip{u_{\f B} - \pi_{h} u_{\f B} ,q} =\aip{u_{\f B} - \pi_{h} u_{\f B} ,(I- P_{\f H})q} .
\end{multline*}
Now, setting \[
q = \frac{P_{\f H_{h}}(u - P_{\f H} u)}{\vn{P_{\f H_{h}}(u - P_{\f H} u)}} \in \f H^{ k}_{h},
\]
we have
\begin{multline*}
 \vn{P_{\f H_{h}}(u - P_{\f H} u)}= \aip{P_{\f H_{h}} (u - P_{\f H} u),q} =\aip{u_{\f B} - \pi_{h} u_{\f B},(I- P_{\f H})q}\\ \leq \vn{u_{\f B} - \pi_{h} u_{\f B}}\, \vn{(I- P_{\f H}) q} \leq C\vn{(I-P_{\f H})q} \inf_{v \in V^{k}_{h}} \vn[V]{u_{\f B} - v}. 
\end{multline*}
Finally, by the second estimate of Theorem \ref{thm:harmonic-approx} above, we can bound $\vn{(I - P_{\f H})q}$ by $\vn{(I -\pi_{h}) P_{\f H} q}$, giving us
\[
\vn{(I-P_{\f H})q} \leq \vn{(I -\pi_{h}) P_{\f H} q} \leq \sup_{\substack{\vn{r}=1\\ r\in \f H^{k}}}\vn{(I - \pi_{h})r}\,\vn{P_{\f H}q} \leq \mu.
\]
From the triangle inequality, we derive the estimate 
\begin{multline*}
\vn[V]{\sigma - \sigma_{h}} + \vn[V]{u-u_{h}} +\vn{p-p_{h}} \\ \leq \vn[V]{\sigma - \tau}  + \vn[V]{u-v}+\vn{p-q} +\vn[V]{\tau-\sigma_{h}} + \vn[V]{u_{h}-v}   + \vn{q-p_{h}} \\
\leq (1+ C\gamma^{-1})\left( \vn[V]{\sigma - \tau}  + \vn[V]{u-v}+\vn{p-q} + \mu \inf_{v \in i_{h} V^{h}_{k}} \vn[V]{P_{\f B} u - v}\right).
\end{multline*}
Using best approximation property of orthogonal projections, we can express the remaining terms with the infima, and this gives the result.
\end{proof}
We also need a technical lemma which enables us to identify the orthogonal projection onto the identified subcomplex $i_{h} \f X^{\prime k}_{h}$ in order to be able to make additional estimates of the variational crimes in terms of the operator norms $\mn{I-J_{h}}$. It is the infinite-dimensional analogue of taking a \index{Moore-Penrose pseudoinverse}Moore-Penrose pseudoinverse~\cite[\S 3.3]{Strang88} for infinite-dimensional spaces:
\begin{lemma} Let $i_{h} : W_{h} \to W$ be an injective map of Hilbert spaces, and $J = i_{h}^{*}i_{h}$. Then $J_{h}$ is invertible, and $J_{h}^{-1}i_{h}^{*}$ is the Moore-Penrose pseudoinverse of $i_{h}$, i.e. it maps $i_{h} W_{h}$ isometrically back to $W_{h}$ with the modified inner product.
\end{lemma}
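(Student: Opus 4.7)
The plan is to establish the three claims in sequence: invertibility of $J_h$, identification of $J_h^{-1} i_h^*$ as the Moore-Penrose pseudoinverse, and the isometry statement.

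\textbf{Step 1 (Invertibility of $J_h$).} First I would note that $J_h = i_h^* i_h$ is self-adjoint and nonnegative on $W_h$, since
\[
\aip[h]{J_h v, v} = \aip[h]{i_h^* i_h v, v} = \aip{i_h v, i_h v} = \vn{i_h v}^2 \geq 0.
\]
Injectivity of $i_h$ immediately upgrades this to strict positivity: if $\aip[h]{J_h v, v} = 0$ then $i_h v = 0$, forcing $v = 0$. In the applications of this paper $W_h$ is finite-dimensional, so a positive definite self-adjoint operator is invertible. For completeness I would also note the general Hilbert-space version: if $i_h$ has closed range (e.g., whenever $W_h$ is complete and $i_h$ is bounded below), the open mapping theorem applied to $i_h : W_h \to i_h W_h$ yields a constant $c > 0$ with $\vn{i_h v} \geq c \vn[h]{v}$, whence $J_h \geq c^2 I$ and $J_h$ is boundedly invertible.

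\textbf{Step 2 (Pseudoinverse identification).} Recall that for an injective bounded map with closed range, the Moore-Penrose pseudoinverse is characterized by agreeing with $i_h^{-1}$ on $i_h W_h$ and vanishing on $(i_h W_h)^\perp$. Both are immediate: for $u = i_h v \in i_h W_h$,
\[
J_h^{-1} i_h^* u = J_h^{-1} i_h^* i_h v = J_h^{-1} J_h v = v,
\]
so $J_h^{-1} i_h^*$ inverts $i_h$ on its range. For $u \in (i_h W_h)^\perp$, the adjoint relation gives $\aip[h]{i_h^* u, v} = \aip{u, i_h v} = 0$ for every $v \in W_h$, hence $i_h^* u = 0$ and therefore $J_h^{-1} i_h^* u = 0$.

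\textbf{Step 3 (Isometry).} The modified inner product on $W_h$ pulled back from $W$ is, by the computation already recorded in the excerpt, $\aip[i_h^* W]{v,w} = \aip[h]{J_h v, w}$. For $u = i_h v \in i_h W_h$ equipped with the restriction of $\aip{\cdot,\cdot}$ from $W$, one has
\[
\vn{u}^2 = \aip{i_h v, i_h v} = \aip[h]{J_h v, v} = \vn[i_h^* W]{v}^2,
\]
and since Step 2 shows $J_h^{-1} i_h^* u = v$, the map $J_h^{-1} i_h^*$ carries $i_h W_h$ isometrically back onto $W_h$ with its modified inner product.

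The only subtle point is the invertibility in Step 1 outside finite dimensions, but in the setting where the lemma is applied $W_h$ is finite-dimensional and the argument reduces to the elementary linear algebra version; the remaining steps are then direct algebraic manipulations using only the definitions of adjoint and of the Moore-Penrose pseudoinverse.
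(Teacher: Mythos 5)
Your proof is correct, and it follows the same overall skeleton as the paper's: invertibility of $J_h$ from injectivity of $i_h$ via positive-definiteness of $\aip[h]{J_h\cdot,\cdot}$, then the algebraic identity $(J_h^{-1}i_h^*)i_h = \operatorname{id}_{W_h}$ for the left-inverse property. Where you diverge is in how the Moore--Penrose property is certified: the paper argues via the least-squares characterization, showing that $w = J_h^{-1}i_h^* b$ minimizes $\tfrac{1}{2}\vn{i_h w - b}^2$ over $W_h$, whereas you use the equivalent characterization that the pseudoinverse of an injective map inverts $i_h$ on its range and annihilates $(i_h W_h)^\perp$ (via $i_h^* u = 0$ there). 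Both are standard and equivalent; yours is arguably more transparent for the way the lemma is actually used downstream (identifying $P_{i_h \f H'_h}$ and transporting projections), while the paper's minimization argument connects more directly to the "least squares" intuition it cites. You also add two things the paper omits: an explicit verification of the isometry claim with respect to the modified inner product $\aip[h]{J_h\cdot,\cdot}$, and a correct caveat that positive-definiteness alone does not give bounded invertibility of $J_h$ in infinite dimensions --- one needs $i_h$ bounded below (equivalently, closed range), or finite-dimensionality of $W_h$ as in the paper's applications. The paper's one-line invertibility claim silently relies on this, so your added care is a genuine improvement rather than pedantry.
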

We write $i_{h}^{+}$ for $J_{h}^{-1} i_{h}^{*}$.
\begin{proof}
The invertibility of $J_{h}$ follows directly from the injectivity of $i_{h}$, which makes $\aip[h]{J_{h}\cdot,\cdot}$ a positive-definite form. Now, $(J_{h}^{-1} i^{*}_{h})i_{h} = J^{-1}_{h}J_{h} = \operatorname{id}_{W_{h}}$, which shows that it is in fact a left inverse, as required for pseudoinverses. To show the orthogonality, minimizing $\frac{1}{2} \vn{i_{h}w - b}^{2}$ for any $b \in W$ yields, by the completeness of $W_{h}$, the solution $w=J_{h}^{-1}i_{h}^{*}b$, showing that it is a least squares solution, therefore the Moore-Penrose pseudoinverse.
\end{proof}
We are now ready to prove our main elliptic error estimate, an extension of Theorem \ref{thm:main-hs-64-errest}. 
\begin{theorem}[Extension of elliptic error estimates to allow for a harmonic part]\label{thm:main-estimate-nonzero-harmonic}\index{error estimates!for the elliptic problem!extension to handle nonzero harmonic} Consider the problems \eqref{eqn:mixed-hodge-laplacian} and \eqref{eqn:mixed-hodge-laplacian-discrete-vcs} but instead with now with prescribed, possibly nonzero harmonic part $w$: Given $f \in W^{k}$ and $w \in \f H^{k}$, we seek $(\sigma,u,p) \in \f X^{k}$ such that
\begin{equation}\label{eqn:mixed-hodge-laplacian-nonzero-harm}
\begin{tabular}{rll}
$\aip{\sigma, \tau} - \aip{u, d\tau}$ & $=0$ & $\forall \tau \in V^{k-1}$\\
$\aip{d\sigma, v} + \aip{du,dv} + \aip{p,v}$ & $=\aip{f,v}$ & $\forall v\in V^{k}$\\
$\aip{u, q} $ & $=\aip{w,q}$ & $\forall q \in \f H^{k}$.
\end{tabular}
\end{equation}
The solution to this problem exists and is unique, with $w$ indeed equal to $P_{\f H} u$, and is bounded by $c(\vn{f} + \vn{w})$, with $c$ depending only on the Poincar\'e constant. Now, consider the discrete problem, with $f_{h}, w_{h} \in V^{k}_{h}$:
\begin{equation}\label{eqn:mixed-hodge-laplacian-discrete-vcs-nonzero-harm}
\begin{tabular}{rll}
$\aip[h]{\sigma_{h}, \tau_{h}} - \aip[h]{u_{h}, d_{h}\tau_{h}}$ & $=0$ & $\forall \tau_{h} \in V_{h}^{k-1}$\\[2mm]
$\aip[h]{d_{h}\sigma_{h}, v_{h}} + \aip[h]{d_{h}u_{h},d_{h}v_{h}} + \aip[h]{p_{h},v_{h}}$ & $=\aip[h]{f_{h},v_{h}}$ & $\forall v_{h}\in V_{h}^{k}$\\[2mm]
$\aip[h]{u_{h}, q_{h}} $ & $=\aip[h]{w_{h},q_{h}}$ & $\forall q_{h} \in \f H_{h}^{k}$.
\end{tabular}
\end{equation}
This problem is also well-posed, with the modified Poincar\'e constant in Theorem \ref{thm:disc-vcs-wellposed-poincare}. Then we have the following generalization of the error estimate \eqref{eqn:main-hs-64-errest} above:
\begin{multline}\label{eqn:main-elliptic-errest}
\vn[V]{\sigma-i_{h}\sigma_{h}} + \vn[V]{u-i_{h}u_{h}} + \vn{p-i_{h}p_{h}} \\
\leq C\left( \inf_{\tau \in i_{h}V_{h}^{k-1}} \vn[V]{\sigma-\tau} + \inf_{v \in i_{h}V_{h}^{k}} \vn[V]{u-v}+ \inf_{q \in i_{h}V_{h}^{k}} \vn[V]{p-q} + \mu \inf_{v \in i_{h}V_{h}^{k}} \vn[V]{P_{\f B}u-v}\right.\\
\left.\vphantom{\inf_{\tau \in i_{h}V_{h}^{k-1}}}+\inf_{\xi \in i_{h} V^{k}_{h}}\vn[V]{w-\xi}+ \vn[h]{f_{h} - i_{h}^{*} f} +  \vn[h]{w_{h} - i_{h}^{*} w}+\mn{I-J_{h}}\,(\vn{f}+\vn{w})\right),
\end{multline}
where, as before, $J_{h} = i_{h}^{*} i_{h}$, and $\mu = \mu_{h}^{k} = \sup\limits_{\substack{r \in \f H^{k} \\ \vn{r} = 1}}\left\|\left(I - i_{h}^{k} \pi_{h}^{k} \right)r\right\|$.
\end{theorem}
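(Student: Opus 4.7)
The plan is to follow closely the architecture of the Holst--Stern proof of Theorem~\ref{thm:main-hs-64-errest}, substituting Lemma~\ref{thm:afw-main-mixed-errest-extended} wherever Theorem~\ref{thm:afw-main-mixed-errest} was invoked, and then tracking the extra data term produced by the nonzero harmonic prescription. The two new ingredients on top of the Holst--Stern argument are: (i) the infimum $\inf_{\xi \in i_h V_h^k}\vn[V]{w - \xi}$ and the data error $\vn[h]{w_h - i_h^* w}$ arising because the third equation in \eqref{eqn:mixed-hodge-laplacian-discrete-vcs-nonzero-harm} now carries nontrivial data; and (ii) an additional $\mn{I-J_h}\,\vn{w}$ contribution from pulling back the inner product on the intermediate complex.

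\textbf{Step 1: well-posedness.} The bilinear form $B$ of \eqref{eqn:afw-bilin-form} is unchanged, so the continuous inf-sup constant $\gamma$ is unchanged; only the linear functional on the right-hand side is enlarged, and it has norm at most $\vn{f}+\vn{w}$. Well-posedness with stability bound $c(\vn{f}+\vn{w})$ follows immediately. Setting $q = P_{\f H}u - w$ in the third equation forces $P_{\f H}u = w$. The discrete problem is handled identically using the modified Poincar\'e constant of Theorem~\ref{thm:disc-vcs-wellposed-poincare}.

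\textbf{Step 2: intermediate complex.} Following Holst--Stern, pull back the inner product on $W$ to $W_h$ via $i_h$, giving $\aip[i_h^* W]{v,w} = \aip[h]{J_h v, w}$. The image $i_h W_h$ is then a genuine subcomplex of $(W,d)$, and $\pi_h$ provides a cochain projection in the sense of~\cite{AFW2010}. In this intermediate complex I solve the mixed problem with data $i_h^* f$ and prescribed harmonic part (orthogonally projected into $i_h\f H_h^{\prime k}$, the harmonic space of the modified complex) corresponding to $i_h^* w$; denote the solution $(\sigma_h',u_h',p_h')$. Applying Lemma~\ref{thm:afw-main-mixed-errest-extended} to this true-subcomplex setting yields
\[
\vn[V]{\sigma - i_h\sigma_h'} + \vn[V]{u - i_h u_h'} + \vn{p - i_h p_h'}
 \leq C\Bigl(\textstyle\inf_\tau\vn[V]{\sigma-\tau} + \inf_v\vn[V]{u-v} + \inf_q\vn[V]{p-q} + \mu\inf_v\vn[V]{P_{\f B}u - v}\Bigr),
\]
with all infima taken over $i_h V_h^{\bullet}$. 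This accounts for the first four terms in \eqref{eqn:main-elliptic-errest}, and the extra infimum $\inf_\xi\vn[V]{w-\xi}$ is picked up because the prescribed harmonic part itself must be approximated in $i_h V_h^k$ when specifying the intermediate problem.

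\textbf{Step 3: variational crime.} Now compare $(\sigma_h,u_h,p_h)$ solving \eqref{eqn:mixed-hodge-laplacian-discrete-vcs-nonzero-harm} with $(\sigma_h',u_h',p_h')$ from Step~2, both living in the \emph{same} finite-dimensional spaces $V_h^{k-1}\times V_h^k$, but tested against the bilinear forms $B_h$ and the pulled-back $B\circ(i_h\otimes i_h)$, respectively, and against different right-hand sides. Subtracting the two systems and invoking the inf-sup condition for $B_h$, the difference of the bilinear forms applied to a fixed pair is controlled by $\mn{I-J_h}$ times norms of the solutions, which by Step~1 are bounded by $\vn{f}+\vn{w}$; the data differences yield $\vn[h]{f_h - i_h^* f}$ and $\vn[h]{w_h - i_h^* w}$. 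One delicate point is that $\f H_h^k$ (adjoint in the $h$-inner product) and $\f H_h^{\prime k}$ (adjoint in the pulled-back inner product) do not in general coincide, so the third equation must be tested with an element of $\f H_h^k$ and the discrepancy between the two projections must be controlled; by Theorem~\ref{thm:harmonic-approx} applied within $W_h$ to the two inner products, this discrepancy is again $O(\mn{I-J_h})$. Combining these ingredients gives
\[
\vn[V]{i_h(\sigma_h-\sigma_h')} + \vn[V]{i_h(u_h-u_h')} + \vn{i_h(p_h-p_h')}
\leq C\bigl(\vn[h]{f_h - i_h^* f} + \vn[h]{w_h - i_h^* w} + \mn{I-J_h}(\vn{f}+\vn{w})\bigr).
\]
A triangle inequality against Step~2 yields \eqref{eqn:main-elliptic-errest}.

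The main obstacle I expect is Step~3, specifically tracking the mismatch between the two discrete harmonic spaces $\f H_h^k$ and $\f H_h^{\prime k}$ when the third equation carries nontrivial right-hand side. In the original Holst--Stern argument this mismatch could be absorbed silently because the right-hand side was zero; here we must verify that testing $\aip[h]{u_h - u_h', q_h}$ for $q_h \in \f H_h^k$ produces a term of order $\mn{I-J_h}$ rather than of order one, and that this estimate chains correctly with the inf-sup bound. Everything else is essentially bookkeeping on top of the Holst--Stern template.
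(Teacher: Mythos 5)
Your overall architecture is the same as the paper's: well-posedness by enlarging only the right-hand functional, an intermediate complex with the pulled-back inner product $\aip[h]{J_h\cdot,\cdot}$ identified with a true subcomplex, Lemma~\ref{thm:afw-main-mixed-errest-extended} for the \textsc{pde} approximation term, and a variational-crimes comparison for the rest. Steps 1 and 2 are essentially correct, except for one misattribution: on the identified subcomplex the third equation of the intermediate problem reads $\aip[h]{J_h u_h', q_h'} = \aip[h]{i_h^* w, q_h'}$, i.e.\ $\aip{i_h u_h', \tilde q} = \aip{w, \tilde q}$ for all $\tilde q \in i_h\f H_h^{\prime k}$, so Lemma~\ref{thm:afw-main-mixed-errest-extended} applies verbatim with the \emph{same} $w$ and produces only the four infima on the second line of \eqref{eqn:main-elliptic-errest}. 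No approximation of $w$ is needed to pose the intermediate problem, and the term $\inf_{\xi\in i_h V_h^k}\vn[V]{w-\xi}$ does not arise in Step~2 as you claim.

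The genuine gap is in Step~3. Your claimed bound
$\vn[V]{i_h(\sigma_h-\sigma_h')} + \vn[V]{i_h(u_h-u_h')} + \vn{i_h(p_h-p_h')} \leq C\bigl(\vn[h]{f_h - i_h^* f} + \vn[h]{w_h - i_h^* w} + \mn{I-J_h}(\vn{f}+\vn{w})\bigr)$
omits the term $\inf_{\xi\in i_h V_h^k}\vn[V]{w-\xi}$, and the justification that the mismatch between $\f H_h^k$ and $\f H_h^{\prime k}$ is ``again $O(\mn{I-J_h})$'' is not correct. Tracking the third equations, one must estimate $\vn[h]{P_{\f H_h}(P_{\f H_h'}u_h' - w_h)}$, and since $P_{\f H_h'}u_h' = P_{\f H_h'}i_h^+ w = i_h^+ P_{i_h\f H_h'} w$, a triangle inequality leaves the piece $\vn{(I-P_{i_h\f H_h'})w}$. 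This quantity measures the distance from the \emph{continuous} harmonic form $w$ to the discrete modified harmonic space; it does not tend to zero as $J_h \to I$ and is not controlled by $\mn{I-J_h}$. It must instead be bounded via the first estimate of Theorem~\ref{thm:harmonic-approx} and quasi-optimality of $\pi_h'$, which is precisely what produces the $\inf_{\xi\in i_h V_h^k}\vn[V]{w-\xi}$ term in the paper's proof. Your two errors happen to cancel in the final tally of terms, but as written Step~3 asserts an estimate that is strictly stronger than what is proved (and than what is true in general), so the argument needs to be repaired by moving the best-approximation term for $w$ from Step~2 to the harmonic-space analysis in Step~3.
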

We see that three new error terms arise from the approximation of the harmonic part, one being the data interpolation error (but measured in the $V_{h}$-norm, which partially captures how $d$ fails to commute with $i_{h}^{*}$ and how $w_{h}$ may not necessarily be a discrete harmonic form), another best approximation term, and finally another term from the non-unitarity. The relation of $f_{h}$ to $f$ and $w_{h}$ to $w$ need not be further specified, because the theorem directly expresses such a dependence in terms of their relation to $i_{h}^{*} f$ and $i_{h}^{*}w$; it has been isolated as a separate issue. However as mentioned in the introduction, and following~\cite{HoSt10a}, we often take $f_{h} = \Pi_{h}f$, where $\Pi_{h}$ is some family of linear interpolation operators with $\Pi_{h} \circ i_{h} = \operatorname{id}$. Another seemingly obvious choice is $i_{h}^{*}$ itself (thus making those corresponding error terms zero), but as mentioned in~\cite{HoSt10a}, this can be difficult to compute, so we do not restrict ourselves to this case. Various choices of interpolation will be crucial in deciding which estimates to make in the parabolic problem. We split the proof of this theorem into two parts, the first of which derives the quantities on the second line of \eqref{eqn:main-elliptic-errest}, and the second part, we derive the quantities on the third line of \eqref{eqn:main-elliptic-errest}. Generally, we follow the pattern of proof in \cite[Theorem 3.9]{AFW2010} and \cite[Theorem 3.10]{HoSt10a}, noting the necessary modifications, as well as a similar technique given for the improved error estimates by Arnold and Chen~\cite{AC2012}.
\begin{proof}[First part of the proof of Theorem \ref{thm:main-estimate-nonzero-harmonic}]
First, following Holst and Stern~\cite{HoSt10a} as above, we construct the complex $W_{h}$ but with the modified inner product $\aip{J_{h}\cdot,\cdot}$ (the associated domain complex $V_{h}$ remains the same). This gives us a discrete Hodge decomposition with another type of orthogonality and corresponding discrete harmonic forms and orthogonal complement (due to a different adjoint $d^{*\prime}_{h}$):
\[
V_{h}^{k} = \f B_{h}^{k} \oplus \f H^{\prime k}_{h} \oplus \f Z^{k\perp\prime}_{h}
\]
(generally, primed objects will represent the corresponding objects defined with the modified inner product; the discrete coboundaries are in fact the same as before, because $d$ and $d_{h}$ do not depend on the choice of inner product). The main complications arise in having to keeping careful track of the different harmonic forms involved, because their nonequivalence and possible non-preservation by the operators contribute directly to the error. We then define, as in \cite{HoSt10a}, the intermediate solution $(\sigma'_{h},u'_{h},p'_{h}) \in V_{h}^{k-1}\times V_{h}^{k} \times \f H_{h}^{\prime k}$ (which we abbreviate as $ \f X^{\prime k}_{h}$):
\begin{equation}\label{eqn:mixed-hodge-laplacian-discrete-intermediate-nonzero-harm}
\begin{tabular}{rll}
$\aip[h]{J_{h}\sigma'_{h}, \tau_{h}} - \aip[h]{J_{h}u'_{h}, d_{h}\tau_{h}}$ & $=0$ & $\forall \tau_{h} \in V_{h}^{k-1}$\\[2mm]
$\aip[h]{J_{h}d_{h}\sigma'_{h}, v_{h}} + \aip[h]{J_{h}d_{h}u'_{h},d_{h}v_{h}} + \aip[h]{J_{h}p'_{h},v_{h}}$ & $=\aip[h]{i_{h}^{*}f,v_{h}}$ & $\forall v_{h}\in V_{h}^{k}$\\[2mm]
$\aip[h]{J_{h}u'_{h}, q'_{h}} $ & $=\aip[h]{i^{*}_{h}w,q'_{h}}$ & $\forall q'_{h} \in \f H_{h}^{\prime k}$,
\end{tabular}
\end{equation}
and the corresponding bilinear form $B'_{h} : \f X'_{h} \times \f X'_{h} \to \R$ given by
\begin{multline}\label{eqn:hs-64-intermediate-bilin-form}
B'_{h}(\sigma'_{h},u'_{h},p'_{h};\tau_{h},v_{h},q'_{h}) :=\aip[h]{J_{h}\sigma'_{h}, \tau_{h}} - \aip[h]{J_{h}u'_{h}, d_{h}\tau_{h}} \\
+\aip[h]{J_{h}d_{h}\sigma'_{h}, v_{h}} + \aip[h]{J_{h}d_{h}u'_{h},d_{h}v_{h}} + \aip[h]{J_{h}p'_{h},v_{h}} 
-\aip[h]{J_{h}u'_{h},q'_{h}}.
\end{multline}
This satisfies the inf-sup condition with Poincar\'e constant $c_{P}\mn{\pi_{h}}$. Note that we will need to extend all the bilinear forms $B_{h}$, and $B'_{h}$ in the last factor to all of $V_{h}^{k}$ in order to compare the two, since they are initially only defined on the respective, \emph{differing} harmonic form spaces. This is not a problem so long as we remember to invoke the inf-sup condition only when using the non-extended versions. The idea is, again, to use the triangle inequality:
\begin{align}
\vn[V]{\sigma-i_{h}\sigma_{h}} &+ \vn[V]{\tau - i_{h}\tau_{h}} + \vn{p - i_{h}p_{h}} \leq \\
&  \vn[V]{\sigma-i_{h}\sigma'_{h}} + \vn[V]{\tau - i_{h}\tau'_{h}} + \vn{p - i_{h}p'_{h}} \label{eqn:afw-intermediate-est} \\
+&\vn[V]{i_{h}(\sigma'_{h}-\sigma_{h})} + \vn[V]{i_{h}(\tau'_{h} - \tau_{h})} + \vn{i_{h}(p'_{h} - p_{h})}\label{eqn:hs-vc-est}.
\end{align}
These quantities can be estimated using only geometric properties of the domain; we have no need to actually explicitly compute $(\sigma'_{h},u'_{h},p'_{h})$. To estimate the term \eqref{eqn:afw-intermediate-est} (which we shall refer to as the \textsc{pde} approximation term, whereas \eqref{eqn:hs-vc-est} will be called variational crimes), we recall that $i_{h}$ is an isometry of $W_{h}$ with the modified inner product to its image, which is a subcomplex.

Thus, Lemma \ref{thm:afw-main-mixed-errest-extended} above applies, with the approximation $(i_{h}\sigma'_{h},i_{h}u'_{h},i_{h}p'_{h})$ on identified subcomplex $i_{h} \f X^{\prime k}_{h}$. This gives us the terms on the second line of \eqref{eqn:main-elliptic-errest}.
\end{proof}
To finish our main proof, we need to consider the variational crimes \eqref{eqn:hs-vc-est}. Since the maps $i_{h}$ are bounded, and we eventually absorb their norms into the constant $C$ above, it suffices to consider $\vn[V_{h}]{\sigma_{h}-\sigma'_{h}} + \vn[V_{h}]{u_{h}-u'_{h}} + \vn[h]{p_{h}-p'_{h}}$, which we shall state as a separate theorem.
\begin{theorem}\label{thm:main-elliptic-estimates-part2} Let $(\sigma_{h},u_{h},p_{h}) \in \f X^{k}_{h}$ be a solution to \eqref{eqn:mixed-hodge-laplacian-discrete-vcs-nonzero-harm}, $(\sigma'_{h},u'_{h},p'_{h}) \in \f X^{\prime k}_{h}$ a solution to \eqref{eqn:mixed-hodge-laplacian-discrete-intermediate-nonzero-harm}, and $w =  P_{\f H} u$, the prescribed harmonic part of the continuous problem. Then
\begin{multline}
\vn[V_{h}]{\sigma_{h}-\sigma'_{h}} + \vn[V_{h}]{u_{h}-u'_{h}} + \vn[h]{p_{h}-p'_{h}} \\
\leq C(\vn[h]{f_{h}- i_{h}^{*} f} + \vn[V_{h}]{ w_{h}- i_{h}^{*} w}+ \mn{I-J_{h}}(\vn{f} + \vn{w}) + \inf_{\xi \in i_{h}V_{h}^{k}} \vn[V]{w - \xi}),
\end{multline}
i.e., they are bounded by the terms on the third line in \eqref{eqn:main-elliptic-errest}.
\end{theorem}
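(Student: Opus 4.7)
The plan is to adapt the variational-crime half of the proof of Theorem \ref{thm:main-hs-64-errest} (Holst and Stern), tracking carefully the additional dependence on the prescribed harmonic part $w$ and its discrete surrogate $w_h$. The starting point is the discrete inf-sup condition for $B_h$ on $\f X_h^k = V_h^{k-1}\times V_h^k \times \f H_h^k$, which follows from Theorem \ref{thm:disc-vcs-wellposed-poincare}. Because $p_h \in \f H_h^k$ but $p'_h \in \f H_h^{\prime k}$, the triple $(\sigma_h - \sigma'_h, u_h - u'_h, p_h - p'_h)$ does not lie in $\f X_h^k$, so I would replace it with $(\sigma_h - \sigma'_h,\ u_h - u'_h,\ p_h - P_{\f H_h} p'_h) \in \f X_h^k$ and pay a separate triangle-inequality penalty of $\vn{p'_h - P_{\f H_h} p'_h}$.

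With a test triple $(\tau_h,v_h,q_h)\in\f X_h^k$ realizing the inf-sup bound, I would compute $B_h(\sigma_h-\sigma'_h,\,u_h-u'_h,\,p_h - P_{\f H_h}p'_h;\,\tau_h,v_h,q_h)$ by subtracting the two systems \eqref{eqn:mixed-hodge-laplacian-discrete-vcs-nonzero-harm} and \eqref{eqn:mixed-hodge-laplacian-discrete-intermediate-nonzero-harm}. The $(\sigma_h,u_h,p_h)$ contribution is $\aip[h]{f_h,v_h}-\aip[h]{w_h,q_h}$ directly. For the $(\sigma'_h,u'_h,p'_h)$ contribution I would write $B_h = B'_h + (B_h - B'_h)$: the $B'_h$ term reproduces $\aip[h]{i_h^*f,v_h}-\aip[h]{i_h^*w,q_h}$ after replacing $q_h$ by its $W_h$-projection to $\f H_h^{\prime k}$, while each piece of $B_h-B'_h$ has the form $\aip[h]{(I-J_h)\cdot,\cdot}$ and is bounded by $\mn{I-J_h}$ times the $V_h$-norm of $(\sigma'_h,u'_h,p'_h)$, itself controlled by $c(\vn{f}+\vn{w})$ via well-posedness of the intermediate problem. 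Collecting and cancelling the common $\vn[\f X_h]{(\tau_h,v_h,q_h)}$ factor against the inf-sup quotient produces the first three groups of contributions $\vn[h]{f_h-i_h^*f}$, $\vn[h]{w_h-i_h^*w}$, and $\mn{I-J_h}(\vn{f}+\vn{w})$.

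The main obstacle, and the source of the new best-approximation term $\inf_{\xi\in i_hV_h^k}\vn[V]{w-\xi}$, is to control both $\vn{p'_h - P_{\f H_h}p'_h}$ and the mismatch introduced when one tests the intermediate third equation, which lives over $\f H_h^{\prime k}$, against a $q_h\in\f H_h^k$. For both, I would invoke Theorem \ref{thm:harmonic-approx} applied to the identified Hilbert subcomplex $(i_hV_h,d)$ under the two different ambient inner products, which quantifies the gap between $\f H_h^k$ and $\f H_h^{\prime k}$ in terms of $I-\pi_h$ acting on harmonic forms and hence by $\mn{I-J_h}$ acting on the relevant data. Since the nontrivial harmonic behavior of $u'_h$ is prescribed through $i_h^*w$, pairing the harmonic-space discrepancy with $w$ produces, after inserting and subtracting an arbitrary $\xi \in i_h V_h^k$ and using self-adjointness of $P_{\f H}$ on the complement, precisely $\inf_{\xi\in i_hV_h^k}\vn[V]{w-\xi}$ modulo terms already subsumed in $\mn{I-J_h}\vn{w}$. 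Summing the pieces, applying boundedness of $i_h$, and absorbing constants yields the stated estimate.
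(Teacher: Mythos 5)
Your proposal is correct and follows essentially the same route as the paper: the $B_h$ versus $B'_h$ comparison yielding the $\aip[h]{(I-J_h)\cdot,\cdot}$ terms, the inf-sup bound after replacing $p'_h$ by $P_{\f H_h}p'_h$, and the use of Theorem \ref{thm:harmonic-approx} on the identified subcomplex to convert the $\f H_h^k$ versus $\f H_h^{\prime k}$ mismatch (driven by $P_{\f H'_h}u'_h = P_{\f H'_h}i_h^+ w$) into the best-approximation term $\inf_{\xi\in i_hV_h^k}\vn[V]{w-\xi}$. The only cosmetic difference is that the paper controls $\vn[h]{p'_h - P_{\f H_h}p'_h}$ by the direct orthogonality identity $P_{\f B_h}J_hp'_h=0$ rather than by Theorem \ref{thm:harmonic-approx}, but this lands on the same $\mn{I-J_h}(\vn{f}+\vn{w})$ bound.
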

%\mnote{possibly express the last term in something more familiar. however, in view of HS-64 \cite{HoSt10a}, Thm 3.12, this may be unnecessary.}
\begin{proof}[Proof of Theorem \ref{thm:main-elliptic-estimates-part2} and second part of the proof of Theorem \ref{thm:main-estimate-nonzero-harmonic}]
We follow the proof of Holst and Stern~\cite[Theorem 3.10]{HoSt10a} and note the modifications. Let $(\tau,v,q)$ and $(\tau_{h},v_{h},q_{h}) \in \f X_{h}^{k}$. Consider the bilinear form $B_{h}$, \eqref{eqn:hs-64-bilin-form} above, and write
\begin{multline*}
B_{h}(\sigma_{h}-\tau,u_{h}-v,p_{h}-q; \tau_{h},v_{h},w_{h}) = B_{h}(\sigma_{h}- \sigma'_{h}, u_{h}-u'_{h},p_{h}-p'_{h};\tau_{h},v_{h},q_{h})\\
+ B_{h}(\sigma'_{h}-\tau,u'_{h}-v,p'_{h}-q;\tau_{h},v_{h},q_{h}).
\end{multline*}
We then have, recalling the modified bilinear form $B'_{h}$, \eqref{eqn:hs-64-intermediate-bilin-form} above,  and extending it in the last factors to all of $V_{h}^{k}$,
\begin{multline*}
B_{h}(\sigma_{h}',u_{h}',p_{h}';\tau_{h},v_{h},q_{h}) \\= B_{h}'(\sigma_{h}',u_{h}',p_{h}';\tau_{h},v_{h},q_{h})
+ \aip[h]{(I-J_{h}) \sigma_{h}',\tau_{h}} - \aip[h]{(I-J_{h})u_{h}',d_{h}\tau_{h}}\\
+\aip[h]{(I-J_{h}) d_{h}\sigma_{h}',v_{h}} +\aip[h]{(I-J_{h}) d_{h}u_{h}',d_{h}v_{h}} +\aip[h]{(I-J_{h}) p_{h}',v_{h}}\\
-\aip[h]{(I-J_{h}) u_{h}',q_{h}}.
\end{multline*}
Substituting the respective solutions \eqref{eqn:mixed-hodge-laplacian-discrete-vcs-nonzero-harm} and \eqref{eqn:mixed-hodge-laplacian-discrete-intermediate-nonzero-harm} (and noting the slight discrepancy in the use of different harmonic forms), we have
\begin{align*}
B_{h}'(\sigma_{h}',u_{h}',p_{h}';\tau_{h},v_{h},q_{h}) &= \aip[h]{i_{h}^{*}f,v_{h}}-\aip[h]{J_{h}u_{h}',q_{h}}\\
B_{h}(\sigma_{h},u_{h},p_{h};\tau_{h},v_{h},q_{h}) &= \aip[h]{f_{h},v_{h}} - \aip[h]{w_{h},q_{h}},
\end{align*}
so
\begin{multline*}
B_{h}(\sigma_{h}-\sigma_{h}',u_{h}-u_{h}',p_{h}-p_{h}';\tau_{h},v_{h},q_{h}) \\
 = \aip[h]{f_{h} - i_{h}^{*} f,v_{h}} +\aip[h]{u_{h}', q_{h}} -\aip[h]{w_{h},q_{h}}\\
- \aip[h]{(I-J_{h}) \sigma_{h}',\tau_{h}} + \aip[h]{(I-J_{h})u_{h}',d_{h}\tau_{h}}\\
-\aip[h]{(I-J_{h}) d_{h}\sigma_{h}',v_{h}} -\aip[h]{(I-J_{h}) d_{h}u_{h}',d_{h}v_{h}} -\aip[h]{(I-J_{h}) p_{h}',v_{h}}.
\end{multline*}
As before, we bound the form above and below. For the upper bound, using Cauchy-Schwarz to estimate the extra inner product terms, we arrive at
\begin{multline*}
B_{h}(\sigma_{h}-\tau,u_{h}-v,p_{h}-q;\tau_{h},v_{h},q_{h})\\
\leq C\left(\vn[h]{f_{h} - i_{h}^{*} f} + \vn[h]{P_{\f H_{h}}( u'_{h} - w_{h})} + \mn{I-J_{h}} (\vn[V_{h}]{\sigma'_{h}} + \vn[V_{h}]{u'_{h}} + \vn[h]{p'_{h}})\right.\\
\left.+\vn[V_{h}]{\sigma_{h}' -\tau} + \vn[V_{h}]{u_{h}'-v} + \vn[h]{p_{h}'-q}\right)\left(\vn[V_{h}]{\tau_{h}} + \vn[V_{h}]{v_{h}} + \vn[h]{q_{h}}\right).
\end{multline*}

For the lower bound, we again choose $(\tau_{h},\sigma_{h},q_{h}) \in \f X^{k}_{h}$ to verify the $\inf$-$\sup$ condition this time for $B_{h}$:
\begin{multline*}
B_{h}(\sigma_{h}-\tau,u_{h}-v,p_{h}-q;\tau_{h},v_{h},q_{h})\\
\geq \gamma_{h} \left(\vn[V_{h}]{\sigma_{h}-\tau}+\vn[V_{h}]{u_{h}-v}+\vn[h]{p_{h}-q}\right)\left(\vn[V_{h}]{\tau_{h}} + \vn[V_{h}]{v_{h}} + \vn[h]{q_{h}}\right)
\end{multline*}
and $\gamma_{h}$ depends only on the Poincar\'e constant $c_{P} \mn{i_{h}}\,\mn{\pi_{h}}$, uniformly bounded in $h$. Comparing with the upper bound and dividing out the common factor as before,
this leads to:
\begin{multline*}
\vn[V_{h}]{\sigma_{h}-\tau}+\vn[V_{h}]{u_{h}-v}+\vn[h]{p_{h}-q}\\
\leq C\gamma_{h}^{-1}\left(\vn[h]{f_{h} - i_{h}^{*} f} + \vn[h]{P_{\f H_{h}}( u'_{h} - w_{h})} + \mn{I-J_{h}} (\vn[V_{h}]{\sigma'_{h}} + \vn[V_{h}]{u'_{h}} + \vn[h]{p'_{h}})\right.\\
\left.+\vn[V_{h}]{\sigma_{h}' -\tau} + \vn[V_{h}]{u_{h}'-v} + \vn[h]{p_{h}'-q}\right).
\end{multline*}
Choosing $(\tau,v,q)=(\sigma_{h}',u'_{h},P_{\f H_{h}}p'_{h})$, applying the triangle inequality with $p_{h}'$ to account for the mismatch in the harmonic spaces, and using the well-posedness of the continuous problem \eqref{eqn:mixed-hodge-laplacian-discrete-intermediate-nonzero-harm},
\begin{multline*}
\vn[V_{h}]{\sigma_{h}-\sigma_{h}'}+\vn[V_{h}]{u_{h}-u_{h}'} + \vn[h]{p_{h}-p'_{h}}\\
\leq C\left(\vn[h]{f_{h} - i_{h}^{*} f} + \vn[h]{P_{\f H_{h}}( u'_{h} - w_{h})} + \mn{I-J_{h}} (\vn{f}+\vn{w}) + \vn[h]{p_{h}'-q}\right).
\end{multline*}
This differs from \cite{HoSt10a} in that we have the bound in terms of $\vn{f} +\vn{w}$, and that we must estimate $\vn[h]{P_{\f H_{h}}( u'_{h} - w_{h})}$ rather than $\vn[h]{P_{\f H_{h}}u'_{h}}$ alone. First, we use the modified Hodge decomposition to uniquely write $u_{h}'$ as  $u'_{\f B} + P_{\f H'_{h}} u'_{h}+ u'_{\perp} $ with $u'_{\f B} \in \f B^{k}_{h}$ and $u'_{\perp} \in \f Z^{k\perp\prime}_{h}$, and
\[
\vn[h]{P_{\f H_{h}}( u'_{h} - w_{h})}  \leq \vn[h]{P_{\f H_{h}} (u'_{\f B}+u'_{\perp})} + \vn[h]{P_{\f H_{h}}(P_{\f H'_{h}} u'_{h} - w_{h})}.
\]
(The projection $P_{\f H'_{h}}$ is respect to the \emph{modified} inner product). For the first term, we proceed exactly as in \cite{HoSt10a}: we have $P_{\f H_{h}} u'_{\f B} = 0$ since the coboundary space is still the same, and thus only the term $u'_{\perp}$  contributes. Now $u'_{\perp} \in \f Z_{h}^{k\perp\prime}$ so, using $J_{h}$ to express it in terms of $V$-orthogonality, we have $J_{h} u'_{\perp} \perp \f Z^{k}_{h}$, and thus $P_{\f H_{h}}J_{h} u'_{\perp}=0$. Therefore, we have 
\[
\vn[h]{P_{\f H_{h}} (u'_{\f B}+u'_{\perp})}  = \vn[h]{P_{\f H_{h}} u'_{\perp}}=\vn[h]{P_{\f H_{h}} (I-J_{h})u'_{\perp}} \leq C\mn{I-J_{h}}(\vn{f}+\vn{w}).
\]
For the $p_{h}'$ term, this also proceeds as in \cite{HoSt10a} unchanged (except for, of course, the extra $\vn{w}$ term): using the (unmodified) discrete Hodge decomposition, we have $p_{h}' = P_{\f B_{h}} p_{h}' + P_{\f H_{h}}p_{h}' = P_{\f B_{h}}p_{h}' + q$. Since $p_{h}' \in \f H_{h}^{\prime k}$, a similar argument gives $J_{h}p'_{h} \perp \f B_{h}^{k}$, so $P_{\f B_{h}} J_{h}p'_{h} = 0$ and
\[
\vn[h]{p'_{h}-q} = \vn[h]{P_{\f B_{h}} p_{h}'} = \vn[h]{P_{\f B_{h}} (I-J_{h})p_{h}'}\leq C\mn{I-J_{h}}(\vn{f}+\vn{w}). 
\]
Finally, we must consider the term $\vn[h]{P_{\f H} (P_{\f H'_{h}} u'_{h} - w_{h})}$. Expressing $u_{h}'$ in terms of $w$, the terms do not combine as easily as the analogous terms involving $f_{h}$ and $i_{h}^{*}f$, because their action as linear functionals operate on different harmonic spaces. 

Continuing with the proof of the theorem, we recall the third equation of \eqref{eqn:mixed-hodge-laplacian-discrete-intermediate-nonzero-harm}:
 \[
\aip[h]{J_{h} u'_{h},q'} = \aip[h]{i_{h}^{*} w,q'} = \aip[h]{J_{h}(J_{h}^{-1} i_{h}^{*} w),q'}
\]
which therefore says $P_{\f H_{h}'} u_{h}' = P_{\f H_{h}'} i_{h}^{+}w$. This enables us to properly work with the modified orthogonal projection $P_{\f H'_{h}}$. Because $i_{h}^{+}$ is an isometry of the subspace $i_{h} W_{h}$ to $W_{h}$ , we have
\[
P_{\f H'_{h}}i_{h}^{+} w =  i_{h}^{+} P_{i_{h}\f H'_{h}} w.
\]
where now $P_{i_{h}\f H'_{h}}$ is the orthogonal projection onto the identified image harmonic space. Then, using the triangle inequality again,
\begin{multline*}
\vn[h]{P_{\f H_{h}}(P_{\f H_{h}'} u'_{h} - w_{h})}\\ \leq \bvn[h]{P_{\f H_{h}}\left(P_{\f H'_{h}}i_{h}^{+} w  - i_{h}^{+} w\right)} + \vn[h]{P_{\f H_{h}}(J_{h}^{-1} i_{h}^{*} w - i_{h}^{*}w)}+ \vn[h]{P_{\f H_{h}}(i_{h}^{*}w - w_{h})}\\
\leq \mn{P_{\f H_{h}}}\left( \mn{i_{h}^{+}}\bvn{\left(I-P_{i_{h}\f H'_{h}}\right) w} + \mn{J_{h}^{-1}} \;\mn{I-J_{h}}\, \vn[h]{i^{*}_{h}w} + \vn[h]{i^{*}_{h} w - w_{h}}\right)\\
\leq C\left(\bvn {\left(I- P_{i_{h} \f H'_{h}}\right)w} + \mn{I-J_{h}} \;\vn{w} + \vn[h]{i_{h}^{*} w - w_{h}}\right).
\end{multline*}
The last term is the data approximation error for $w$, and the second term combines with the previous errors that reflect the non-unitarity of the operator. So, all that remains is to estimate the first term. Since it is in the subcomplex $i_{h} W_{h}$, the first estimate of Theorem \ref{thm:harmonic-approx} applies:
\begin{equation}\label{eqn:I-PH-to-inf}
\bvn{\left(I-P_{i_{h}\f H'_{h}}\right)w}\leq \vn{(I-\pi_h')w}\leq C \inf_{\xi\in i_h V^{k}_h} \vn[V]{w-\xi},
\end{equation}
by quasi-optimality.
\end{proof}

\begin{proof}[Concluding remarks of the proof of Theorem \ref{thm:main-estimate-nonzero-harmonic}]To summarize, we proved our Main Theorem \ref{thm:main-estimate-nonzero-harmonic} by defining an intermediate solution on a modified complex that we identify with a subcomplex, and analyzing the result via the Arnold, Falk, and Winther~\cite{AFW2010} framework. That theorem holds, with the estimate unchanged, though now $u$ and $u_{h}$ no longer are perpendicular to their respective harmonic spaces. The place where the extra terms all show up is in the variational crimes. In the process of arriving at a term that looks like $i_{h}^{*}w - w_{h}$, working with the different harmonic forms produces two more non-unitarity terms $\mn{I-J_{h}} (\vn{f} + \vn{w})$, and finally, using Theorem \ref{thm:harmonic-approx} yields a direct estimate of how $w$ fails to be a modified discrete harmonic form, giving the last best approximation term $\inf_{\xi\in i_h V^{k}_h} \vn[V]{w-\xi}$.
\end{proof}

We also note for future reference that in spaces where we have improved error estimates (which means $\pi_{h}$ are $W$-bounded maps) that we can replace that last $V$-norm in \eqref{eqn:I-PH-to-inf} to be the $W$-inner product. Finally, we remark that, for a certain types of data interpolation, the errors $\vn{f_{h} - i_{h}^{*} f}$ and $\vn{w_{h} - i_{h}^{*} w}$ can be rewritten in terms of the other errors and another best approximation term. This will be useful for us in our examples.
\begin{theorem}[Holst and Stern \cite{HoSt10a}, Theorem 3.12]\label{thm:fam-of-projections}
If $\Pi_{h}:W^{k} \to W_{h}^{k}$ is a family of linear projections uniformly bounded with respect to $h$, then for all $f \in W^{k}$,
\begin{equation}
\vn{\Pi_{h} f- i_{h}^{*} f}\leq C\left(\mn{I-J_{h}} \, \vn{f } + \inf_{\phi \in i_{h}W_{h}^{k}}\vn{f-\phi}\right).
\end{equation}

\end{theorem}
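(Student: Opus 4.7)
The plan is to establish a telescoping identity that isolates both the best-approximation defect and the non-unitarity of $i_h$, then pick a convenient $\phi_h \in W_h^k$ to balance the two contributions. Specifically, for any $\phi_h \in W_h^k$, using $\Pi_h \circ i_h = \operatorname{id}$ on the first piece and $i_h^* i_h = J_h$ on the second, one computes
\[
\Pi_h(f - i_h\phi_h) = \Pi_h f - \phi_h, \qquad i_h^*(f - i_h\phi_h) = i_h^* f - J_h \phi_h,
\]
so subtracting gives the key identity
\[
\Pi_h f - i_h^* f \;=\; \Pi_h(f - i_h\phi_h) \;-\; i_h^*(f - i_h\phi_h) \;+\; (I - J_h)\phi_h.
\]

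With this identity in hand, the strategy is to choose $\phi_h = \Pi_h f$. Taking $\|\cdot\|_h$-norms and invoking the uniform boundedness of $\Pi_h$ (by hypothesis) and of $i_h^*$ (dual to the $V$-bounded morphism $i_h$), the first two terms are controlled by $C\,\|f - i_h\Pi_h f\|$, while the third is bounded by $\|I - J_h\|\,\|\Pi_h f\|_h \leq C\,\|I - J_h\|\,\|f\|$, which is exactly the first term on the right-hand side of the claimed estimate.

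It therefore remains to show the quasi-optimality $\|f - i_h\Pi_h f\| \leq C \inf_{\phi \in i_h W_h^k}\|f - \phi\|$. The composition $i_h \Pi_h \colon W^k \to W^k$ is a (non-orthogonal) linear projection onto $i_h W_h^k$, since $(i_h\Pi_h)(i_h\Pi_h) = i_h(\Pi_h i_h)\Pi_h = i_h\Pi_h$ and $i_h\Pi_h$ fixes every element of $i_h W_h^k$. Hence for each $\phi \in i_h W_h^k$ we have $f - i_h\Pi_h f = (I - i_h\Pi_h)(f - \phi)$, and taking infima over $\phi$ yields the desired bound with $C = 1 + \|i_h\|\,\|\Pi_h\|$, uniformly bounded in $h$. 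Combining these estimates with the identity completes the argument.

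The only step requiring any care is reconciling the norm conventions: the displayed inequality really controls $\|\Pi_h f - i_h^* f\|_h$, and one must track that $\|\phi_h\|_h = \|\Pi_h f\|_h$ (not an ambient $W$-norm) is what appears when bounding the $(I-J_h)\phi_h$ term. The main obstacle, such as it is, lies in verifying that $i_h \Pi_h$ inherits quasi-optimality from the projection property; once that is established, the whole proof is essentially algebraic.
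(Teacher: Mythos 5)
Your proof is correct, and it follows essentially the same route as the cited argument in Holst--Stern (the paper itself states this result without reproducing the proof): telescope through an element of $i_hW_h^k$, use $\Pi_h\circ i_h=\operatorname{id}$ and $J_h=i_h^*i_h$ to produce the $(I-J_h)$ term, and bound the remainder by the best-approximation error --- your only variation is fixing $\phi_h=\Pi_hf$ and then invoking quasi-optimality of the projection $i_h\Pi_h$, rather than taking the infimum over arbitrary $\phi\in i_hW_h^k$ directly. One trivial correction: the boundedness of $i_h^*$ in the relevant norms comes from the assumed \emph{$W$-boundedness} of $i_h$ (as stated in Section~\ref{sec:results-par}), not its $V$-boundedness.
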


%%%%%%%%%%%%%%%%%%%%%%%%%%%%%%%%%%%%%%%%%%%%%%%%%%%%%
%% Abstract Evolution Problems: the Bochner space approach
%%%%%%%%%%%%%%%%%%%%%%%%%%%%%%%%%%%%%%%%%%%%%%%%%%%%%
\section{Abstract Evolution Problems}
\label{sec:abs-evol}
In order to solve and approximate linear evolution problems, we introduce the framework of Bochner spaces (also following Gillette, Holst, and Zhu \cite{GiHo11a}), which realizes time-dependent functions as curves in Banach spaces (which will correspond to spaces of spatially-dependent functions in our problem). We follow mostly \cite{RR2004} and \cite{Ev98} for this material.
\subsection{Overview of Bochner Spaces and Abstract Evolution Problems}
Let $X$ be a Banach space and $I:=[0,T]$ an interval in $\R$ with $T>0$. We define
\[C(I,X) := \{u: I\raw X\;\;|\;\;\text{$u$ bounded and continuous}\}.\]
In analogy to spaces of continuous, real-valued functions, we define a supremum norm on $C(I,X)$,
making $C(I, X)$ into a Banach space:
\[\vn{u}_{C( I,X)} := \sup_{t\in  I}\vn{u(t)}_X.\]

We will of course need to deal with norms other than the supremum norm, which motivates us to define \keyterm{Bochner spaces}: to define $\e L^p( I,X)$, we complete $C(I,X)$ with the norm
\[\vn{u}_{L^p( I,X)} := \left(\int_{ I}\vn{u(t)}^p_X dt\right)^{1/p}.\]
Similarly, we have the space $H^1(I,X)$, the completion of $C^{1}(I,X)$ with the norm
\[\vn{u}_{H^1( I,X)} := \left(\int_{ I}\vn{u(t)}^2_X + \bvn{\frac d{dt}u(t)}^2_X dt\right)^{1/2}.\]
There are methods of formulating this in a more measure-theoretic way (\cite[Appendix E]{Ev98}), considering Lebesgue-measurable subsets of $I$.

As mentioned before, for our purposes, $X$ will be some space of spatially-dependent functions, and the time-dependence is captured as being a curve in this function space (although this interpretation is only correct when we are considering $C(I,X)$---we must be careful about evaluating our functions at single points in time without an enclosing integral). Usually, $X$ will be a space in some Hilbert complex, such as $L^2\Omega^k(M)$ or $H^s\Omega^k(M)$ where the forms are defined over a Riemannian manifold $M$.

We introduce this  framework in order to be able to formulate parabolic problems more generally. It turns out to be useful to consider the concept of \emph{rigged Hilbert space} or \emph{Gelfand triple}, which consists of a triple of separable Banach spaces \[
V \subseteq H \subseteq V^{\ast}
\] such that $V$ is continuously and densely embedded in $H$. For example, if $(V,d)$ is the domain complex of some Hilbert complex $(W,d)$, setting $V = V^{k}$ and $H = W^{k}$ works, as well as various combinations of their products (so that we can use mixed formulations). $H$ is also continuously embedded in $V^\ast$. The standard isomorphism (given by the Riesz representation theorem) between $V$ and $V^\ast$, is not generally the composition of the inclusions, because the primary inner product of importance for weak formulations is the $H$-inner product. It coincides with the notion of distributions acting on test functions. Writing $\aip{\cdot ,\cdot}$ for the inner product on $H$, the setup is designed so that when it happens that some $F \in V^\ast$ is actually in $H$, we have $F(v) = \aip{F,v}$ (which is why we will often write $\aip{F,v}$ to denote the action of $F$ on $v$ even if $F$ is not in $H$). In fact, in most cases of interest, the $H$-inner product is the restriction of a more general bilinear form between two spaces, in which elements of the left (acting) space are of less regularity than elements of $H$, while elements of the right space have more regularity.

Given $A \in C(I,\Lin(V,V^\ast))$, a time-dependent linear operator, we define the bilinear form
\begin{equation}
\label{eq:bilinear}
a(t,u,v) := \aip{-A(t)u,v},
\end{equation}
for $(t,u,v)\in\R\times V\times V$. To proceed, as in elliptic problems, we need $a$ to satisfy some kind of coercivity condition, although it need not be as strong. It turns out that G\r{a}rding's Inequality is the right condition to use here:
\begin{equation}
\label{eq:coercivity}
a(t,u,u)\geq c_1\vn{u}^2_V-c_2\vn{u}^2_H,
\end{equation}
with $c_1$, $c_2$ constants independent of $t\in I$.  Then the following problem is the abstract version of linear, parabolic problems:
\begin{align}
\displaystyle u_t & = A(t)u + f(t)
\label{eq:absparab} \\
\displaystyle u(0) & = u_0.
\label{eq:absparab-ic}
\end{align}
This problem is well-posed:
\begin{theorem}[Existence of Unique Solution to the Abstract Parabolic Problem, \cite{RR2004}, Theorem 11.3]
\label{thm:parabwp}
Let $f\in L^2(I,V^\ast)$ and $u_0\in H$, and~$a$ the time-dependent quadratic form in~\eqref{eq:bilinear}.
Suppose~\eqref{eq:coercivity} holds.
Then the abstract parabolic problem (\ref{eq:absparab}) has a unique solution 
\[u\in L^2( I,V)\cap H^1( I,V^\ast).\]
Moreover, the Sobolev embedding theorem implies $u\in C(I,H)$, which allows us to unambiguously evaluate the solution at time zero, so the initial condition makes sense, and the solution indeed satisfies it: $u(0)=u_0$.
\end{theorem}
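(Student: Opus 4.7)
The plan is to prove the theorem by the Faedo--Galerkin method in the classical style of \cite{Ev98,RR2004}. Treating $V$ as a separable Hilbert space, I would first select a countable orthonormal basis $(w_j)_{j\ge 1}$ of $H$ whose elements lie in $V$ and whose span is dense in $V$ (for instance, the eigenfunctions of a symmetric coercive form on $V$). For each $m\ge 1$ I would seek a Galerkin approximant $u_m(t) = \sum_{j=1}^{m} c^{j}_{m}(t)\,w_j$ determined by
\begin{equation*}
\aip{\partial_t u_m(t), w_j} \;=\; \aip{A(t)u_m(t), w_j} + \aip{f(t), w_j},\qquad j=1,\ldots,m,
\end{equation*}
with $u_m(0) = P_m u_0$, where $P_m$ is the $H$-orthogonal projection onto $\operatorname{span}\{w_1,\ldots,w_m\}$. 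In coordinates this is a linear first-order ODE system with identity mass matrix and Carath\'eodory-measurable right-hand side, so standard ODE theory produces a unique $u_m$ on all of $I$.

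Next, I would derive two \emph{a priori} estimates uniform in $m$. Testing against $u_m(t)$ and applying G\r{a}rding's inequality \eqref{eq:coercivity} followed by Young's inequality in the $V\times V^{\ast}$ duality gives
\begin{equation*}
\tfrac{1}{2}\tfrac{d}{dt}\vn[H]{u_m}^{2} + c_1\vn[V]{u_m}^{2} \;\le\; c_2\vn[H]{u_m}^{2} + \tfrac{1}{2c_1}\vn[V^{\ast}]{f}^{2} + \tfrac{c_1}{2}\vn[V]{u_m}^{2},
\end{equation*}
and a Gr\"onwall argument on $I$ yields $\vn[L^{\infty}(I,H)]{u_m}^{2} + \vn[L^{2}(I,V)]{u_m}^{2} \le C(\vn[H]{u_0}^{2} + \vn[L^{2}(I,V^{\ast})]{f}^{2})$. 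For the dual estimate I would write $\partial_t u_m = P_m A(\cdot)u_m + P_m f$ and invoke the uniform $V$-boundedness of $P_m$ to bound $\vn[L^{2}(I,V^{\ast})]{\partial_t u_m}$ by the same data. Banach--Alaoglu then extracts a subsequence converging weakly in $L^{2}(I,V)$ and in $H^{1}(I,V^{\ast})$ to a limit $u$; passing to the limit in the Galerkin equation against each fixed $w_j$ and using density of $\bigcup_m \operatorname{span}\{w_1,\ldots,w_m\}$ in $V$ shows that $u$ satisfies \eqref{eq:absparab} as a $V^{\ast}$-valued identity for a.e.\ $t\in I$.

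The membership $u\in C(I,H)$ and the meaning of $u(0)$ come from the Lions--Magenes embedding $L^{2}(I,V)\cap H^{1}(I,V^{\ast}) \hookrightarrow C(I,H)$, together with the distributional product rule $\tfrac{d}{dt}\vn[H]{u}^{2} = 2\aip{\partial_t u, u}$ valid in the rigged triple. To identify $u(0)$ with $u_0$, I would integrate the variational equation against a scalar $\varphi\in C^{1}([0,T])$ with $\varphi(0)=1$, $\varphi(T)=0$, integrate by parts in $t$ at both the Galerkin and limit levels, and compare the boundary contributions at $t=0$; the term $\aip{P_m u_0 - u(0), w_j}$ is forced to vanish for every $j$, which by density gives $u(0)=u_0$ in $H$. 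Uniqueness is then immediate: if $u_1,u_2$ are two solutions, their difference $w$ satisfies $w_t = A(t)w$ and $w(0)=0$, and the identical G\r{a}rding--Gr\"onwall energy estimate forces $w\equiv 0$.

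The main obstacle is the dual bound on $\partial_t u_m$ in $L^{2}(I,V^{\ast})$. One cannot simply test the Galerkin equation against an arbitrary $v\in V$, since the identity only holds on the finite-dimensional span; one must instead verify that $P_m$ is uniformly $V$-bounded so that $\vn[V^{\ast}]{P_m(Au_m+f)} \le C\vn[V^{\ast}]{Au_m+f}$ independently of $m$. This is exactly what dictates the careful choice of Galerkin basis (eigenfunctions of an auxiliary coercive form on $V$, rather than an arbitrary Schauder basis). Once that uniform boundedness is secured, the remaining compactness, weak limit-passing, and Lions--Magenes continuity arguments are routine.
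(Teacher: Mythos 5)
Your proposal is correct and follows essentially the same route the paper itself indicates: the paper defers the proof to \cite{RR2004} and sketches exactly this Faedo--Galerkin argument (a basis orthonormal in $H$ and simultaneously orthogonal in $V$, \emph{a priori} bounds from G\r{a}rding's inequality, and extraction of a weakly convergent subsequence). Your additional care about the uniform $V$-boundedness of $P_m$ for the dual estimate on $\partial_t u_m$ is precisely the point that forces the simultaneous-orthogonality choice of basis mentioned in the paper, so the two accounts agree in substance.
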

This theorem is proved via standard methods (\cite[p. 382]{RR2004}); we take an orthonormal basis of $H$ that is simultaneously orthogonal for $V$ (a frequent situation occurring when, say, it is an orthonormal basis of eigenfunctions of the Laplace operator), formulate the problem in the finite-dimensional subspaces, and use \emph{a priori} bounds on such solutions to extract a weakly convergent subsequence. With this framework, we can show that a wide class of \textsc{PDE} problems, particularly ones that are suited to finite element approximations, are well-posed.
\subsection{Recasting the Problem as an Abstract Evolution Equation}
Let us now see how these results apply in the case of the Hodge heat equation (\ref{eq:par-cnts}) on manifolds. We take a slightly different approach from what is done in \cite{GiHo11a} and \cite{AC2012}, solving an equivalent problem. This sets things up for our modified numerical method detailed in later sections.

Let $(W,d)$ be a closed Hilbert complex, with domain complex $(V,d)$, the standard setup in the above---in particular, we have the Poincar\'e inequality and the well-posedness of the continuous Hodge Laplacian problem. We consider the space $\f Y^{k} := V^{k-1} \times V^{k}$ and its dual $\f Y'=(V^{k-1})'\times (V^{k})'$ with the obvious product norms (we use primes to denote dual spaces so as not to conflict with the dual complex with respect to the Hodge star defined earlier, though these uses are related). This, along with $H = W^{k-1}\times W^{k}$, gives rigged Hilbert space structure
\[
\f Y \subseteq H \subseteq \f Y'.
\]
The embeddings are dense and continuous by definition of the graph inner product and that the operators $d$ have dense domain. We consider the \keyterm{Bochner mixed weak parabolic problem}: to seek a weak solution $(u,\sigma) \in L^2(I,\f Y) \cap H^1 (I, \f Y')$ to the mixed problem
\begin{equation}
\label{eq:boch-mixedweak}
\begin{tabular}{rllll}
$\aip{\sigma,\omega} - \aip{u,d\omega}$ & $= 0,$ & $\forall~\omega\in V^{k-1},$ & $t\in I$,  \\[2mm]
$\aip{u_{t},\varphi} + \aip{du,d\varphi} + \aip{d\sigma,\varphi}$ & $= \aip{f,\varphi},$ & $\forall~ \varphi\in V^k,$ & $t\in I$, \\[2mm]
$u(0)$ & $= g$,
\end{tabular}
\end{equation}
this makes it suitable for approximation using finite-dimensional subspaces of $\f Y '$ (e.g. degrees of freedom for finite element spaces). We see that (\ref{eq:boch-mixedweak}) is the mixed form of (\ref{eq:par-cnts}), which amounts to defining a \emph{system} of differential equations, introducing the variable $\sigma$ defined by $\sigma=d^{*} u$, where $d^{*}$ is the adjoint of the operator $d$. We write the equation weakly (namely, moving $d^{*}$ back to the other side), which makes no difference at the continuous level, but will make a significant difference when discretizing.

In order to use the abstract machinery above, we need a term with $\sigma_t$. Formally differentiating the first equation of \eqref{eq:par-mixedweak}, and substituting $\varphi = d\omega$ in the second equation, we obtain 
\[
0=\aip{\sigma_t, \omega} - \aip{u_t,d\omega} = \aip{\sigma_t, \omega}  - \aip{f,d\omega}+ \aip{d\sigma, d\omega} + \aip{du,dd\omega}.
\]
Since $d^2 = 0$, that last term vanishes, and so, together with the equation for $u_t$, we have the following system:
\begin{equation}
\label{eq:boch-mixedweak-time}
\begin{tabular}{rllll}
$\aip{\sigma_t,\omega}  + \aip{d\sigma, d\omega}$ & $= \aip{f,d\omega},$ & $\forall~\omega\in V^{k-1},$ & $t\in I$,  \\[2mm]
$\aip{u_{t},\varphi} + \aip{d\sigma,\varphi} + \aip{du,d\varphi}$ & $= \aip{f,\varphi},$ & $\forall~ \varphi\in V^k,$ & $t\in I$, \\[2mm]
$u(0)$ & $= g$.
\end{tabular}
\end{equation}
\begin{theorem} Suppose the initial condition $g$ is in the domain of the adjoint $V^{*}$ and $f \in L^{2}(I,(V^{k})')$. Then the problem \eqref{eq:boch-mixedweak-time} is well-posed: there exists a unique solution $(\sigma,u) \in L^2 (I,\f Y  ) \cap H^1 (I,\f Y') \cap C(I, H)$ with $(\sigma(0),u(0)) = (d^{*} g,g)$.
\end{theorem}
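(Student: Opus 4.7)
The plan is to apply the abstract parabolic existence theorem \ref{thm:parabwp} twice, exploiting a structural observation that the derivation in the text has already done most of the work for us. Specifically, because the step producing \eqref{eq:boch-mixedweak-time} used $d^{2}=0$ to kill the $\aip{du,dd\omega}$ term, the first equation of \eqref{eq:boch-mixedweak-time} no longer involves $u$ at all and is a self-contained parabolic equation for $\sigma$. Thus I would solve for $\sigma$ first, then feed $d\sigma$ as a source term into the second equation and solve for $u$.

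\emph{Step 1 (solving for $\sigma$).} Apply Theorem \ref{thm:parabwp} with the Gelfand triple $V^{k-1} \subseteq W^{k-1} \subseteq (V^{k-1})'$, time-independent form $a_{1}(\sigma,\omega) := \aip{d\sigma,d\omega}$, load functional $f_{1}(\omega) := \aip{f,d\omega}$, and initial datum $\sigma(0) = d^{*}g$, which is a well-defined element of $W^{k-1}$ precisely because of the hypothesis $g \in V^{*}_{k}$. The continuity estimate $|f_{1}(\omega)| \le \vn[(V^{k})']{f}\,\vn[V^{k-1}]{\omega}$ shows $f_{1} \in L^{2}(I,(V^{k-1})')$. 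For G\r{a}rding's inequality, no Poincar\'e argument is needed: one simply notes
\[
a_{1}(\sigma,\sigma) = \vn{d\sigma}^{2} = \vn[V^{k-1}]{\sigma}^{2} - \vn{\sigma}^{2},
\]
so \eqref{eq:coercivity} holds with $c_{1}=c_{2}=1$. Theorem \ref{thm:parabwp} then yields a unique $\sigma \in L^{2}(I,V^{k-1}) \cap H^{1}(I,(V^{k-1})') \hookrightarrow C(I,W^{k-1})$ with $\sigma(0)=d^{*}g$.

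\emph{Step 2 (solving for $u$).} Using the $\sigma$ from Step 1, set $\tilde f(\varphi) := \aip{f,\varphi} - \aip{d\sigma,\varphi}$. Since $\sigma \in L^{2}(I,V^{k-1})$ gives $d\sigma \in L^{2}(I,W^{k}) \hookrightarrow L^{2}(I,(V^{k})')$, we have $\tilde f \in L^{2}(I,(V^{k})')$. Apply Theorem \ref{thm:parabwp} a second time, now with triple $V^{k} \subseteq W^{k} \subseteq (V^{k})'$, form $a_{2}(u,\varphi) := \aip{du,d\varphi}$, load $\tilde f$, and initial datum $u(0)=g$ (well-defined in $W^{k}$ since $V^{*}_{k} \subseteq W^{k}$). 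The same G\r{a}rding-type identity $a_{2}(u,u) = \vn[V^{k}]{u}^{2} - \vn{u}^{2}$ applies, producing a unique $u \in L^{2}(I,V^{k}) \cap H^{1}(I,(V^{k})') \hookrightarrow C(I,W^{k})$ with $u(0)=g$. Taking products gives $(\sigma,u) \in L^{2}(I,\f Y) \cap H^{1}(I,\f Y') \cap C(I,H)$, and uniqueness of the pair is inherited from the uniqueness at each stage.

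The main obstacle is essentially cosmetic rather than analytic: one has to verify that the hypothesis $g \in V^{*}_{k}$ is exactly strong enough to make $\sigma(0) = d^{*}g$ a legal initial condition in $H = W^{k-1}$, and one has to be comfortable that G\r{a}rding's inequality in this setting does \emph{not} require the abstract Poincar\'e inequality on $V^{k-1}$ (which only holds on $\f Z^{(k-1)\perp}$). The form $a_{1}$ controls exactly $\vn{d\sigma}^{2}$, which is the $V$-seminorm; because G\r{a}rding allows us to absorb $\vn{\sigma}^{2}$ into the $-c_{2}\vn{\sigma}_{H}^{2}$ term on the right, no coercivity on the $\f Z^{k-1}$ component of $\sigma$ is ever needed, and the zero modes of $\sigma$ simply propagate unchanged in time---consistent with the decoupled structure noted above.
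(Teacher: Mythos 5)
Your proposal is correct, but it takes a genuinely different route from the paper. The paper treats \eqref{eq:boch-mixedweak-time} as a single coupled evolution equation on $\f Y = V^{k-1}\times V^{k}$, defines one operator $A:\f Y \to \f Y'$ whose bilinear form contains the cross term $\aip{d\sigma,\varphi}$, and verifies G\r{a}rding's inequality for the pair by absorbing that cross term via Cauchy--Schwarz and the AM--GM inequality, arriving at $a(\sigma,u;\sigma,u)\geq \tfrac12\vn[\f Y]{(\sigma,u)}^{2}-\vn[H]{(\sigma,u)}^{2}$; Theorem~\ref{thm:parabwp} is then invoked once. You instead exploit the observation that the cancellation $d^{2}=0$ has left the system lower triangular: the first equation involves only $\sigma$, so you solve a scalar parabolic problem for $\sigma$ on the triple $V^{k-1}\subseteq W^{k-1}\subseteq (V^{k-1})'$, then feed $d\sigma\in L^{2}(I,W^{k})$ into the second equation as part of the load and solve for $u$. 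Your verifications are sound: $\omega\mapsto\aip{f,d\omega}$ is bounded on $V^{k-1}$ because $d\omega\in V^{k}$ with $\vn[V]{d\omega}=\vn{d\omega}\leq\vn[V]{\omega}$; the G\r{a}rding inequalities become exact identities with $c_{1}=c_{2}=1$ and require no Poincar\'e input; the hypothesis $g\in V^{*}_{k}$ is used exactly where you say, to make $\sigma(0)=d^{*}g$ a legal element of $H$; and uniqueness of the pair follows from uniqueness at each stage since any solution's $\sigma$-component must solve the decoupled first equation. What your approach buys is a cleaner coercivity argument (no cross term to absorb) and an explicit exposure of the triangular structure; what the paper's approach buys is a single evolution operator on $\f Y$ that mirrors the coupled semidiscrete ODE system \eqref{eq:discrete-evolution-system} used later, and a template that would survive if the off-diagonal coupling had not vanished. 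Both yield the same solution space $L^{2}(I,\f Y)\cap H^{1}(I,\f Y')\cap C(I,H)$ and the same initial data.
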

\begin{proof}
We see that given $f \in L^2(I, (V^{k})^{\prime})$, we have that the functional $F : (\tau, v) \mapsto \aip{f,d\tau} + \aip{f,v}$ is in $L^2(I, \f Y ')$, since $d$ maps $V^{k-1}$ to $V^k$. For an initial condition on $\sigma$, we can demand that $\sigma(0)$ be the unique $\sigma_0$ statisfying $\aip{\sigma_0,\tau} - \aip{g,d\tau} = 0$. For this to reasonably hold, we must actually have at least $u_0 \in V^{*}_{k}$, the domain of the adjoint operator $d^{*}$, that is, $\sigma_0 = d^{*} g$. We equip the spaces with the standard inner products for product spaces:
\begin{align}
\baip[H]{(\sigma,u ) ,( \tau , v)} & := \aip{\sigma,\tau} + \aip{u,v}\\
\baip[\f Y]{(\sigma,u ) ,( \tau , v)} & := \aip[V] {\sigma,\tau} + \aip[V]{u,v}.
\end{align}
Consider the operator $A : \f Y  \to \f Y '$ defined by 
\[
a(\sigma,u; \omega,\varphi) = \aip[]{-A(\sigma,u), (\omega,\varphi)} = \aip{d\sigma,d\omega} + \aip{d\sigma,\varphi} + \aip{du,d\varphi}.
\]
With the functional $F$ defined as above, we have $F \in L^2 (I , \f Y ')$, and so \eqref{eq:boch-mixedweak-time} is equivalent to the problem
\begin{equation}
(\sigma,u)_t  = A(\sigma,u) + F.
\end{equation}

We now need to verify that the bilinear form $a$ satisfies G\r arding's Inequality:
\begin{align*}
a(\sigma,u;\sigma,u) &=  \vn{d\sigma}^{2} + \aip{d\sigma,u} + \vn{du}^{2} \\
&= \vn{\sigma}^2_V - \vn{\sigma}^{2} + \aip{d\sigma,u} + \vn[V]{u}^{2} - \vn{u}^2 \\
&\geq \vn{\sigma}^2_V - \vn{\sigma}^{2} - \vn{d\sigma}\,\vn{u} + \vn{u}^2_V - \vn{u}^{2}   \\
&\geq  \vn{\sigma}^2_V - \vn{\sigma}^{2} - \frac{1}{2} \vn{\sigma}^2_V - \frac{1}{2}\vn{u}^2_V + \vn{u}^2_V- \vn{u}^{2}  \\
&= \frac{1}{2} \vn{(\sigma,u)}^2_{\f Y} - \vn{(\sigma,u)}_{H}^2.
\end{align*}
Thus, the abstract theory applies, and noting that the initial conditions $(d^{*}g,g) \in H$, we have that \[(\sigma,u) \in L^2 (I,\f Y  ) \cap H^1 (I,\f Y') \cap C(I, H)\] is the unique solution to  \eqref{eq:boch-mixedweak-time} with initial conditions given by $u(0) = g \in V^{*}_{k}$ and $\sigma(0) = d^{*} g$.
\end{proof}
Given this, however, we must still establish that we also have a solution to the original mixed problem (which will be crucial in our error estimates):
\begin{theorem} Let $(\sigma,u)  \in L^2 (I,\f Y  ) \cap H^1 (I,\f Y') \cap C(I, H)$ solve \eqref{eq:boch-mixedweak-time} above with the initial conditions. Then, in fact, $(\sigma,u)$ also solves \eqref{eq:boch-mixedweak}.
\end{theorem}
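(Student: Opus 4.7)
The plan is to observe that the second equation of \eqref{eq:boch-mixedweak} is already literally the second equation of \eqref{eq:boch-mixedweak-time}, and the initial condition $u(0)=g$ is also shared. So the entire content of the theorem is the recovery of the \emph{first} equation, namely
\[
\aip{\sigma(t),\omega} - \aip{u(t),d\omega} = 0, \qquad \forall\,\omega \in V^{k-1},\ t\in I,
\]
starting only from the time-differentiated first equation. The natural strategy is to fix $\omega\in V^{k-1}$, introduce the scalar function
\[
e_\omega(t) := \aip{\sigma(t),\omega} - \aip{u(t),d\omega},
\]
and prove that $e_\omega \equiv 0$ by verifying that $e_\omega(0)=0$ and that $e_\omega'(t)=0$ for a.e.\ $t\in I$.

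For the initial value, I would invoke the prescribed initial conditions $u(0)=g$ with $g\in V^{\ast}_{k}$ and $\sigma(0)=d^{\ast}g$. Then by the very definition of the adjoint,
\[
e_\omega(0) = \aip{d^{\ast}g,\omega} - \aip{g,d\omega} = 0.
\]
For the derivative, I would test the first equation of \eqref{eq:boch-mixedweak-time} against the fixed $\omega$, giving $\aip{\sigma_t,\omega} = -\aip{d\sigma,d\omega} + \aip{f,d\omega}$, and test the second equation of \eqref{eq:boch-mixedweak-time} against $d\omega \in V^{k}$, using $d^{2}\omega=0$ to kill the $\aip{du,d(d\omega)}$ term; this gives $\aip{u_t,d\omega} = -\aip{d\sigma,d\omega} + \aip{f,d\omega}$. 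Subtracting yields $e_\omega'(t) = \aip{\sigma_t,\omega} - \aip{u_t,d\omega} = 0$, so $e_\omega(t)=e_\omega(0)=0$ for all $t\in I$, and the first equation of \eqref{eq:boch-mixedweak} is recovered.

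The main obstacle is a regularity issue rather than an algebraic one: the pointwise pairings $\aip{\sigma(t),\omega}$ and $\aip{u(t),d\omega}$ must make sense and $e_\omega$ must be absolutely continuous in $t$ with derivative equal to the distributional expression computed above. Since $(\sigma,u)\in C(I,H)$ by the well-posedness theorem and $\omega$, $d\omega$ are fixed elements of the ambient Hilbert spaces, the pointwise evaluations are meaningful. The identification of $\frac{d}{dt}\aip{\sigma(t),\omega}$ with $\aip{\sigma_t(t),\omega}$ (and analogously for $u$) follows from the standard characterization of the weak time-derivative for elements of $H^{1}(I,\f Y')$ against fixed test functions in $\f Y$; this is exactly the tool used in the Lions--Magenes / Evans framework cited just before the statement. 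Once this justification is in place, the argument above runs pointwise a.e.\ in $t$, and continuity of $e_\omega$ promotes the a.e.\ identity to everywhere on $I$, completing the proof.
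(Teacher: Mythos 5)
Your proposal is correct and follows essentially the same route as the paper: the paper likewise observes that the second equation and the initial condition carry over verbatim, tests the differentiated system against the pair $(\omega,-d\omega)$ (equivalently, your testing of the two equations against $\omega$ and $d\omega$ and subtracting) to get $\aip{\sigma_t,\omega}-\aip{u_t,d\omega}=0$ via $d^2=0$, and then concludes from the validity of the first mixed equation at $t=0$. Your additional care about the meaning of the pointwise pairings and the identification of $\frac{d}{dt}\aip{\sigma(t),\omega}$ with the weak derivative is a welcome refinement of a step the paper leaves implicit, but it does not constitute a different argument.
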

\begin{proof} The second equation already holds, as it is incorporated unchanged into the equations \eqref{eq:boch-mixedweak-time}. To show the first equation, we show \[
\aip{\sigma_t, \omega} - \aip{u_t,d\omega} = 0
\]
for all time $t$. Then, since the original mixed equation holds at the initial time, standard uniqueness ensures it holds for all $t \in I$. We simply realize it is setting $\varphi = -d\omega$:
\begin{multline*}
\aip{\sigma_t, \omega} - \aip{u_t,d\omega}  = \aip[H]{(\sigma,u)_t, (\omega, -d\omega)} = a(\sigma_t,u_t;\omega, -d\omega) + \aip{f,d\omega} + \aip{f,-d\omega} \\
= \aip{d\sigma, d\omega} + \aip {d\sigma,-d\omega} + \aip{du,dd\omega} = 0.
\end{multline*}
\end{proof}

%%%%%%%%%%%%%%%%%%%%%%%%%%%%%%%%%%%%%%%%%%%%%%%%%%%%%
%%%%%%%%%%%%%%%%%%%%%%%%%%%%%%%%%%%%%%%%%%%%%%%%%%%%%
%%
%% Main Error Estimates section
%%
%%%%%%%%%%%%%%%%%%%%%%%%%%%%%%%%%%%%%%%%%%%%%%%%%%%%%
%%%%%%%%%%%%%%%%%%%%%%%%%%%%%%%%%%%%%%%%%%%%%%%%%%%%%

\section{Error Estimates for the Abstract Parabolic Problem}\label{sec:results-par}
We now combine all the preceding abstract theory (the Holst-Stern~\cite{HoSt10a} framework recalled in \S\ref{sec:approx-hilb-complex}, and the abstract evolution problems framework recalled in \S\ref{sec:abs-evol}) to extend the error estimates of Gillette, Holst, and Zhu~\cite{GiHo11a} and in particular, recover the case of approximating parabolic equations on compact, oriented\footnote{Using differential pseudoforms (\cite[\S2.8]{Fra04}, \cite{We98}), we can eliminate this restriction. However, more theory needs to be developed for that case; the normal projection, in particular. We consider this in future work.}  Riemannian hypersurfaces in $\R^{n+1}$ with triangulations in a tubular neighborhood. The key equation in the derivation of the estimates are the generalizations of Thom\'ee's evolution equations for the error terms. We shall see that these equations lead most naturally to the use of certain Bochner norms for the error estimates that are different for each component in the equation.

Let $(W,d)$ be a closed Hilbert complex with domain $(V,d)$, and the Gelfand triple $\f Y \subseteq H \subseteq \f Y'$ on this complex as above. Now consider our previous standard setup of finite-dimensional approximating complexes $(W_{h},d)$ with domain $(V_{h},d)$, with corresponding spaces $\f Y^{k}_{h} = V^{k-1}_{h} \times V^{k}_{h}$ (it is $\f X^{k}_{h}$ missing the harmonic factor), $i_{h} : V_{h} \hookrightarrow V$ injective morphisms (that are $W$-bounded), $\pi_{h} : V_{h} \to V$ projection morphisms (which may be merely $V$-bounded), and $\pi_{h}\circ i_{h} = \operatorname{id}$. Finally, we consider data interpolation operators $\Pi_{h} : W \to W_{h}$, such that $\Pi_{h} \circ i_{h} =\operatorname{id}$ that realize which projections for the inhomogeneous and prescribed harmonic terms ($f_{h}$ and $w_{h}$ in the abstract theory above) that we use.

\subsection{Discretization of the weak form}\label{rem:semidiscrete}
Suppose we have $f \in L^{2}(I,(V^{k})')$ and $g \in V^\ast_{k}$. Let $(\sigma,u)\in L^2( I, \f Y) \cap H^1( I,\f Y')\cap C(I, H)$ be the unique (continuous) solution to \eqref{eq:boch-mixedweak}, as covered in \S\ref{sec:abs-evol}. As in \cite{GiHo11a}, we can consider approximations to this solution as functionals on finite-dimensional spaces $\f Y_{h}$, e.g. finite element spaces. With the above considerations, we formulate the \keyterm{semi-discrete Bochner parabolic problem}: Find $(\sigma_h,u_h):I\raw \f Y_h$ such that
\begin{equation}
\label{eq:par-semidisc}
\begin{tabular}{rllll}
$\aip[h]{\sigma_h,\omega_h} - \aip[h]{u_h,d\omega_h}$ & $= 0,$ & $\forall~\omega_h\in V^{k-1}_h,$ & $t\in I$\\[2mm]
$\aip[h]{u_{h,t},\varphi_h} + \aip[h]{d\sigma_h,\varphi_h}+\aip[h]{du_h,d\varphi_h}$ & $= \aip[h]{\Pi_h f,\varphi_h},$ & $\forall~ \varphi_h\in V^k_h,$ & $t\in I$ \\[2mm]
$u_h(0)$ & $=g_h$.
\end{tabular}
\end{equation}
(We use the notation of Thom\'ee for the test forms.) We define $g_h$, the projected initial data, shortly. A similar argument as in \S\ref{sec:abs-evol} above, differentiating the first equation with respect to time, considering the Gelfand triple $\f Y^{k}_{h} \subseteq W^{k-1}_{h} \times W^{k}_{h} \subseteq (\f Y^{k}_{h})'$ gives that this problem is well-posed (or more simply, we choose bases and reduce to standard ODE theory as in \eqref{eq:par-semidisc-first} above). Following Gillette, Holst, and Zhu~\cite{GiHo11a}, we define the \keyterm{time-ignorant discrete problem}, using the idea of elliptic projection~\cite{W1973} which we use to define a discrete solution via elliptic projection of the continuous solution at each time $t_0 \in I$: We seek $(\tilde \sigma_h, \tilde u_h,\tilde p_h) \in \f X^{k}_{h}$ such that\index{elliptic projection}
\begin{equation}
\label{eq:par-to-ellip}
\begin{tabular}{rll}
$\aip[h]{\tilde\sigma_h,\omega_h} - \aip[h]{\tilde u_h,d\omega_h}$ & $= 0,$ & $\forall~\omega_h\in V^{k-1}_h$  \\[2mm] 
$\aip[h]{d\tilde \sigma_h,\varphi_h} +\aip[h]{d\tilde u_{h}, d\varphi_{h}}+\aip[h]{\tilde{p}_h, \varphi_h}$ & $= \aip[h]{\Pi_h (-\Delta u(t_0)),\varphi_h},$  & $\forall~ \varphi_h\in V^k_h$  \\[2mm] 
$\aip[h]{\tilde u_h, q_h}$ & $= \aip[h]{\Pi_h (P_{\f H}u(t_0)), q_h}$ & $\forall~q_h \in \mathfrak{H}^k_h$. \\[2mm]
\end{tabular}
\end{equation}
Note that we have included a prescribed harmonic form given by the harmonic part of $u$ (following \cite{AC2012}). We then take the initial data $g_{h}$ to be $\tilde u_{h}(0)$; it is just the solution to the elliptic problem with load data $\Pi_{h}(-\Delta g)$, since $u(0) = g$. Note we do not directly interpolate $g$ itself via $\Pi_h$ for the data; the reason for this will be seen shortly. This discrete problem is well-posed, i.e., a unique solution $u_{h}(t_{0})$ always exists for every time $t_{0} \in I$, by the first part of Theorem \ref{thm:main-estimate-nonzero-harmonic} above. The presence of an additional term $\tilde{p}_{h}$ and equation involving harmonic forms departs from Gillette, Holst, and Zhu~\cite{GiHo11a}, because the theory there is facilitated by the fact that there are no harmonic $n$-forms on open domains in $\R^n$ (the \emph{natural} boundary conditions for such spaces are Dirichlet boundary conditions, in contrast to the more classical example of $0$-forms, i.e. functions). Here, however, we must consider harmonic forms, since we may not be working at the end of an abstract Hilbert complex. For our model problem, namely differential forms on compact orientable manifolds (without boundary), even in the case of $n$-forms, the theory is completely symmetric (by Poincar\'e duality \cite{BoTu82,Jo11,Pet06}).\footnote{Despite this, there are a number of reasons why one should still prefer to continue to phrase problems in terms of $n$-forms if the problem calls for it (\cite{Fra04} describes how it affects the interpretation of certain quantities); and we shall see that it does in fact still make a difference at the discrete level.} In addition, the linear projections $\Pi_{h}$ may not preserve the harmonic space, which gives the possibility of a nonzero $\tilde p_{h}$, despite $-\Delta u$ having zero harmonic part (so it is its own error term). 

\subsection{Determining the error terms and their evolution}
Continuing the method of Thom\'ee~\cite{T2006}, we use the time-ignorant discrete solution as an intermediate reference, and estimate the total errors by comparing to this reference and using the triangle inequality. Roughly speaking, we try to estimate as follows:
\begin{align}
\vn[V]{i_h \sigma_h(t) - \sigma(t)} &\leq \vn[V]{i_h \sigma_h(t) - i_h \tilde \sigma_h(t)} + \vn[V]{i_h \tilde \sigma_h(t) - \sigma(t)} \label{eqn:tot-error-sig}\\
\vn[V]{i_h u_h(t) - u(t)} &\leq \vn[V]{i_h u_h(t) - i_h \tilde u_h(t)} + \vn[V]{i_h \tilde u_h(t) - u(t)}\label{eqn:tot-error-u}.
\end{align}
It turns out that this grouping of the terms is not the most natural for our purposes. We shall see it is the structure of the error evolution equations that groups the terms more naturally as:
\begin{align}
&\vn{i_h u_h(t) - u(t)}\label{eqn:L2-error-u}\\
&\vn{i_{h}\sigma_{h}(t) - \sigma(t)} + \vn{d(i_{h} u_{h}(t) - u(t))}\label{eqn:L2-error-du-sig} \\
&\vn{d(i_{h} \sigma_{h}(t) - \sigma(t))}\label{eqn:L2-error-dsig}.
\end{align}
The sum of these three terms is the sum of the two $V$-norms above. In addition, we shall see in our application to hypersurfaces that this particular grouping of the error terms also corresponds more precisely to the order of approximations in the improved estimates for the elliptic projection (namely, they are of orders $h^{r+1}$, $h^{r}$, and $h^{r-1}$, respectively, for degree-$r$ polynomial differential forms).

The plan is to use the theory of Holst and Stern~\cite{HoSt10a} reviewed in \S\ref{sec:approx-hilb-complex} above to estimate the sum of the two second terms in \eqref{eqn:tot-error-sig} and \eqref{eqn:tot-error-u}; the elliptic projection simply is an approximation, at each fixed time, of the trivial case of $u$ being the solution of the continuous problem with data given by its own Laplacian, $-\Delta u$. The harmonic form portion will come up naturally as part of the calculuation. Using the notation of Thom\'ee~\cite{T2006}, we define the error functions
\begin{align}
\rho(t) &:= \tilde{u}_h(t) - i_{h}^{*}u(t)\label{eq:error-funcs-rho}\\
\theta(t) &:= u_h(t) - \tilde{u}_h(t)\label{eq:error-funcs-theta}\\
\psi(t) &= \sigma_{h}(t) - i_{h}^{*}\sigma(t)\label{eq:error-funcs-psi}.\\
\varepsilon(t) &:= \sigma_h(t) - \tilde{\sigma}_h(t)\label{eq:error-funcs-sig}
\end{align}
(Thom\'ee does not define the third term $\psi$; we have added it for convenience.) In the case that there are no variational crimes (i.e., $J_{h}$ is unitary), the error terms $\rho$ and $\psi$ are bounded above by the elliptic projection errors (because there, $i_{h}^{*}$ is the orthogonal projection, and $\mn{i_{h}^{*}} =\mn{i_{h}} =1$), so that we have, for example, that $\vn{i_{h} u_{h} - u} \leq \vn{\theta} + \vn{\rho} $, corresponding to the use of $\rho$ in \cite{T2006,GiHo11a}. For our purposes, however, the choice of $\rho$ here does not correspond as neatly, now being an intermediate quantity that helps us estimate $\theta$ in terms the elliptic projection error (the second term in \eqref{eqn:tot-error-u}). We find that it contributes more terms with $\vn{I -J_{h}}$. Similar remarks apply for $\sigma$ and $\psi$. We use the method of Thom\'ee to estimate the terms $\theta$ and $\varepsilon$ in terms of (the time derivatives of) $\rho$ and $\psi$, and the elliptic projection error; In order to do this, we need an analogue of Thom\'ee's error equations.
\begin{lemma}[Generalized Thom\'ee error equations]\index{Thom\'ee's error equations} Let $\theta$, $\rho$, and $\varepsilon$ be defined as above. Then for all $t \in I$,
\begin{equation}
\label{eq:par-error}
\begin{tabular}{rll}
$\aip[h]{\varepsilon,\omega_h}- \aip[h]{\theta, d\omega_h}$ &$= 0$ & $\forall \varphi_h \in V_{h}^{k-1}$, \\[2mm]
$\aip[h]{\theta_t,\varphi_h} + \aip[h]{d\varepsilon, \varphi_h} + \aip[h]{d\theta,d\varphi_{h}}$ & $=  \aip[h]{- \rho_t +\tilde{p}_h + (\Pi_{h}-i_{h}^{*})u_{t},\varphi_h}$ & $\forall \omega_h \in V_{h}^{k}$.
\end{tabular}
\end{equation}
\end{lemma}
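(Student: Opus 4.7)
The plan is to derive both error equations by directly subtracting the semidiscrete system \eqref{eq:par-semidisc} from the time-ignorant elliptic system \eqref{eq:par-to-ellip}, and then invoking the continuous PDE $u_t - \Delta u = f$ to rewrite the elliptic load term in favor of $u_t$. The calculation is essentially bookkeeping; the only nontrivial tasks are to justify time-differentiation of $\tilde u_h$ and to track how the harmonic-form correction $\tilde p_h$ and the interpolation discrepancy $\Pi_h - i_h^{*}$ propagate through the difference.

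For the first error equation, substituting $\varepsilon = \sigma_h - \tilde\sigma_h$ and $\theta = u_h - \tilde u_h$ splits $\aip[h]{\varepsilon,\omega_h} - \aip[h]{\theta,d\omega_h}$ into the difference of the first line of \eqref{eq:par-semidisc} and the first line of \eqref{eq:par-to-ellip}. Each of these two expressions vanishes identically for $\omega_h \in V_h^{k-1}$, so the difference is zero and the first equation of \eqref{eq:par-error} drops out immediately.

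For the second error equation, I would split $\aip[h]{\theta_t,\varphi_h} + \aip[h]{d\varepsilon,\varphi_h} + \aip[h]{d\theta,d\varphi_h}$ into the $(u_h,\sigma_h)$-terms minus the $(\tilde u_h,\tilde\sigma_h)$-terms. The $(u_h,\sigma_h)$-sum equals $\aip[h]{\Pi_h f,\varphi_h}$ by the second line of \eqref{eq:par-semidisc}. For the $(\tilde u_h,\tilde\sigma_h)$-side, the second line of \eqref{eq:par-to-ellip} (with $t_0 = t$) rearranges to $\aip[h]{d\tilde\sigma_h,\varphi_h} + \aip[h]{d\tilde u_h,d\varphi_h} = \aip[h]{\Pi_h(-\Delta u),\varphi_h} - \aip[h]{\tilde p_h,\varphi_h}$, leaving the term $\aip[h]{\tilde u_{h,t},\varphi_h}$ to be subtracted as well. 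Combining, the left hand side equals
\[
\aip[h]{\Pi_h(f+\Delta u) - \tilde u_{h,t} + \tilde p_h,\varphi_h}.
\]
Applying $u_t = f + \Delta u$ turns the first summand into $\Pi_h u_t$, and inserting $\pm i_h^{*}u_t$ together with the identity $-\rho_t = -\tilde u_{h,t} + i_h^{*}u_t$ rewrites $\Pi_h u_t - \tilde u_{h,t}$ as $-\rho_t + (\Pi_h - i_h^{*})u_t$, yielding exactly the claimed right hand side.

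The main subtlety, rather than a genuine obstacle, is the legitimacy of $\tilde u_{h,t}$: for each $t \in I$ the time-ignorant problem \eqref{eq:par-to-ellip} is a well-posed finite-dimensional system by Theorem~\ref{thm:main-estimate-nonzero-harmonic}, and its solution depends linearly and continuously on the data $\Pi_h(-\Delta u(t))$ and $\Pi_h(P_{\f H}u(t))$, which are as regular in $t$ as $u$ itself. Hence $\tilde u_h$ inherits the $t$-regularity of $u$, and termwise differentiation of $\theta$ in $t$ is valid. A conceptual point worth flagging is the appearance of $\tilde p_h$ on the right hand side: at the continuous level $-\Delta u$ is orthogonal to the harmonic space, yet $\Pi_h$ need not preserve this orthogonality, so a discrete harmonic residual survives and must be carried along in the evolution equation.
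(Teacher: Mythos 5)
Your proposal is correct and follows essentially the same route as the paper's proof: subtract the time-ignorant elliptic system from the semidiscrete system, invoke the strong form $u_t = \Delta u + f$, and insert $\pm i_h^{*}u_t$ to produce $-\rho_t + (\Pi_h - i_h^{*})u_t$, with $\tilde p_h$ surviving from the elliptic projection's harmonic correction. The only difference is organizational (the paper carries $\aip[h]{\rho_t,\varphi_h}$ on the left from the outset and calls the combination $B$), and your added remark on the time-regularity of $\tilde u_h$ is a reasonable point the paper addresses only implicitly elsewhere.
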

This differs from Thom\'ee \cite{T2006} and Gillette, Holst, and Zhu \cite{GiHo11a} with the harmonic term $\tilde{p}_h$, which accounts for the projections $\Pi_h$ possibly not sending the harmonic forms to the discrete harmonic forms, an extra $d\theta$ term which accounts for possibly working away from the end of the complex (for differential forms on an $n$-manifold, forms of degree $k<n$), and another data interpolation error term for $u_{t}$ (which also distinguishes it from Arnold and Chen \cite{AC2012}).

\begin{proof} The first equation is simply weakly expressing $\varepsilon$ as $d_{h}^{*}\theta$. This follows immediately from the corresponding equations in the semidiscrete problem and the time-ignorant discrete problem. For the second term, consider the expression 
\begin{equation}\label{eqn:b-thomee-error-derivation}
B:=\aip[h]{\theta_t,\varphi_h} + \aip[h]{d\varepsilon, \varphi_h} + \aip[h]{d\theta,d\varphi_h}  + \aip[h]{\rho_t, \varphi_h},
\end{equation}
and expand it using the definitions to obtain
\begin{multline*}
B=\aip[h]{u_{h,t},\varphi_h}- \aip[h]{\tilde{u}_{h,t},\varphi_h} \\+ \aip[h]{d\sigma_h-d\tilde\sigma_h,\varphi_h} + \aip[h]{du_{h}-d\tilde u_{h},d\varphi}+ \aip[h]{\tilde{u}_{h,t} ,\varphi_h}- \aip[h]{i_{h}^{*} u_t,\varphi_h}.
\end{multline*}
We cancel the $\tilde u_{h,t}$ terms, and apply the semidiscrete equation \eqref{eq:par-semidisc} to cancel the $d\sigma_{h}$ and $du_{h}$ terms, which gives us
\[
B=\aip[h]{\Pi_{h} f,\varphi_{h}} -\aip[h]{d\tilde \sigma_{h},\varphi_{h}} -\aip[h]{d\tilde u_{h},d\varphi_{h}} -\aip[h]{i_{h}^{*}u_{t},\varphi_{h}},
\]
and finally, using the second equation of \eqref{eq:par-to-ellip} to account for the middle terms, we have
\begin{multline*}
B=\aip[h]{\Pi_{h} f,\varphi_{h}} + \aip[h]{\Pi_{h}(\Delta u),\varphi} + \aip[h]{\tilde p_{h},\varphi_{h}} -\aip[h]{i_{h}^{*}u_{t},\varphi_{h}}\\
=\baip[h]{\Pi_{h}\left(\Delta u + f -u_{t}\right),\varphi_{h}} + \aip[h]{\tilde p_{h},\varphi_{h}} + \aip[h]{(\Pi_{h} -i_{h}^{*})u_{t},\varphi_{h}}.
\end{multline*}
But since $u_{t} = \Delta u + f$ is the strong form of the equation, which we know is satisfied by the uniqueness, it follows that $B = \aip[h]{\tilde p_{h}+(\Pi_{h} -i_{h}^{*})u_{t},\varphi_{h}}$. Subtracting the $\rho_{t}$ from both sides  gives the result.
\end{proof}

%%%%%% Main theorem
Now we present our main theorem.
\begin{theorem}[Main parabolic error estimates]\label{eq:main-parabolic-estimate}\index{error estimates!main parabolic estimates theorem}\index{main parabolic estimates theorem|textbf}
Let $(\sigma,u)$ be the solution to the continuous problem \eqref{eq:boch-mixedweak}, $(\sigma_{h},u_h)$ be the semidiscrete solution \eqref{eq:par-semidisc}, $(\tilde \sigma_{h},\tilde u_h)$ the elliptic projection \eqref{eq:par-to-ellip}, and the error quantities \eqref{eq:error-funcs-rho}-\eqref{eq:error-funcs-sig} be defined as above. Then we have the following error estimates:

\begin{align}
\vn[h]{\theta(t)}&\leq \vn[L^{1}(I,W_{h})]{\rho_{t}} + \vn[L^{1}(I,W_{h})]{\tilde p_{h}} + \vn[L^{1}(I,W_{h})] {(\Pi_{h} - i_{h}^{*})u_{t}}\label{eq:main-parabolic-estimate-u}\\
\vn[h]{d\theta(t)} + \vn[h]{\varepsilon(t)} & \leq C\left(\vn[L^{2}(I,W_{h})]{\rho_{t}} + \vn[L^{2}(I,W_{h})]{\tilde p_{h}} + \vn[L^{2}(I,W_{h})] {(\Pi_{h} - i_{h}^{*})u_{t}}\right)\label{eq:main-parabolic-estimate-du-sig}\\
\vn[h]{d\varepsilon(t)} & \leq C\left(\vn[L^{2}(I,W_{h})]{\psi_{t}} + \vn[L^{2}(I,W_{h})]{d^{*}_{h} (\Pi_{h}-i_{h}^{*}) u_{t} }\right),\label{eq:main-parabolic-estimate-dsig}
\end{align}
with
\begin{align}
\vn[L^{2}(I,W_{h})]{\rho_{t}}& \leq C\left( \vn[L^{2}(I,W)]{i_{h} \tilde u_{h,t} - u_{t} } + \mn{I-J_{h}}_{\Lin(W_{h})}\, \vn[L^{2}(I,W)]{u_{t}}\right)\\
\vn[L^{2}(I,W_{h})]{\psi_{t}} &\leq C\left(\vn[L^{2}(I,W)]{i_{h}\tilde \sigma_{h,t} - \sigma_{t}} +\mn{I-J_{h}}_{\Lin(W_{h})}\,\vn[L^{2}(I,W)]{\sigma_{t}}\right).
\end{align}

\end{theorem}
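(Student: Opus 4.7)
The plan is to use the generalized Thomée error equations as the backbone, combined with standard energy testing and the abstract Poincaré inequality. The initial conditions are clean: $u_h(0) = \tilde u_h(0) = g_h$ by construction, and $\sigma_h(0), \tilde\sigma_h(0)$ are both determined from this common datum by the same first equation, so $\theta(0) = 0$ and $\varepsilon(0) = 0$. Write $G := -\rho_t + \tilde p_h + (\Pi_h - i_h^*)u_t$ for the right-hand side of the second error equation.

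For the $\theta$ estimate \eqref{eq:main-parabolic-estimate-u}, test the second error equation with $\varphi_h = \theta$. Applying the first error equation with $\omega_h = \varepsilon$ converts the cross term into $\langle d\varepsilon,\theta\rangle_h = \|\varepsilon\|_h^2$, giving
\[
\tfrac12\tfrac{d}{dt}\|\theta\|_h^2 + \|\varepsilon\|_h^2 + \|d\theta\|_h^2 = \langle G,\theta\rangle_h \le \|G\|_h\,\|\theta\|_h.
\]
Dividing through by $\|\theta\|_h$ (discarding the two nonnegative dissipative terms) and integrating from $0$ with $\theta(0)=0$ yields the $L^1$-in-time bound. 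For \eqref{eq:main-parabolic-estimate-du-sig}, test instead with $\varphi_h = \theta_t$. Differentiating the first error equation in time and taking $\omega_h = \varepsilon$ produces $\langle d\varepsilon,\theta_t\rangle_h = \tfrac12\tfrac{d}{dt}\|\varepsilon\|_h^2$, and $\langle d\theta,d\theta_t\rangle_h = \tfrac12\tfrac{d}{dt}\|d\theta\|_h^2$. Applying Young's inequality to $\langle G,\theta_t\rangle_h$ absorbs $\|\theta_t\|_h^2$ into the left, and integrating from $0$ (using $\varepsilon(0)=d\theta(0)=0$) produces the claimed $L^2$-in-time estimate.

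For the $d\varepsilon$ estimate \eqref{eq:main-parabolic-estimate-dsig}, test the second error equation with $\varphi_h = d\varepsilon \in V_h^k$; the term $\langle d\theta,d(d\varepsilon)\rangle_h$ vanishes by $d^2=0$, and $\langle \tilde p_h,d\varepsilon\rangle_h = 0$ because $\tilde p_h \in \mathfrak{H}_h^k$ is orthogonal to discrete coboundaries. The key extra ingredient is a $\psi$-identity: subtracting the continuous first mixed equation (applied to $\omega=i_h\omega_h$ and using that $i_h$ is a cochain map so $d i_h\omega_h = i_h d\omega_h$) from the semidiscrete first equation, one obtains $\langle \psi,\omega_h\rangle_h = \langle \theta+\rho,d\omega_h\rangle_h$; differentiating and taking $\omega_h = \varepsilon$ gives
\[
\langle \theta_t,d\varepsilon\rangle_h = \langle \psi_t,\varepsilon\rangle_h - \langle \rho_t,d\varepsilon\rangle_h.
\]
Substituting and passing $(\Pi_h - i_h^*)u_t$ through the discrete adjoint yields the cancellation identity $\|d\varepsilon\|_h^2 = \langle -\psi_t + d_h^*(\Pi_h - i_h^*)u_t,\,\varepsilon\rangle_h$. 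Finally, the first error equation shows $\varepsilon$ is $W_h$-orthogonal to $Z_h^{k-1}$, so the abstract Poincaré inequality gives $\|\varepsilon\|_h \le c_P\|d\varepsilon\|_h$; Cauchy--Schwarz and division produce the desired pointwise (and hence $L^2$-in-time) bound.

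For the subsidiary bounds, use $\|w\|_h^2 - \|i_h w\|_W^2 = \langle (I-J_h)w,w\rangle_h$ for $w \in W_h$, which for small $\|I-J_h\|$ yields $\|\rho_t\|_h \lesssim \|i_h\rho_t\|_W$. Writing $i_h\rho_t = (i_h\tilde u_{h,t} - u_t) + (u_t - i_h i_h^* u_t)$ and noting that $i_h\tilde u_{h,t} \in \mathrm{range}(i_h)$ means the closest-point projection $P_{\mathrm{range}(i_h)}u_t$ is at least as close to $u_t$ as $i_h\tilde u_{h,t}$, so the second term splits as $(I-P_{\mathrm{range}(i_h)})u_t + i_h(J_h^{-1}-I)i_h^* u_t$, controlled by $\|i_h\tilde u_{h,t}-u_t\|_W$ and $\|I-J_h\|\,\|u_t\|_W$ respectively; the $\psi_t$ case is identical. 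The main obstacle is the bookkeeping: keeping track of which inner product is in play, ensuring the discrete harmonic cancellations survive composition with $\Pi_h$ (which need not preserve harmonic forms, hence the $\tilde p_h$ error term), and verifying that all adjoint manipulations $d^* \leftrightarrow d_h^*$ remain valid under the injective morphism $i_h$ rather than honest inclusion.
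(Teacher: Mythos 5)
Your arguments for \eqref{eq:main-parabolic-estimate-u}, \eqref{eq:main-parabolic-estimate-du-sig}, and the two subsidiary bounds are essentially the paper's: the same energy testing of the Thom\'ee error equations (with $\varphi_h=\theta$, $\omega_h=\varepsilon$ for the first, and $\varphi_h=\theta_t$ together with the time-differentiated first equation for the second), the same use of $\theta(0)=\varepsilon(0)=0$ coming from the choice $g_h=\tilde u_h(0)$, and the same pseudoinverse decomposition via $i_h^{+}=J_h^{-1}i_h^{*}$ for converting $\vn[h]{\rho_t}$ and $\vn[h]{\psi_t}$ into elliptic-projection errors plus $\mn{I-J_h}$ terms. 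These parts are fine (the paper additionally inserts Thom\'ee's $\delta$-regularization before dividing by $\vn[h]{\theta}$, a technicality you omit).

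For \eqref{eq:main-parabolic-estimate-dsig} you take a genuinely different route --- testing the second error equation directly with $\varphi_h=d\varepsilon$ and closing with the discrete Poincar\'e inequality, instead of the paper's time-differentiation with $(\omega_h,\varphi_h)=(2\varepsilon_t,2d\varepsilon_t)$ and the exact cancellation of $2\vn[h]{\varepsilon_t}^2$ --- and this is where the gap is. Your identity $\aip[h]{\psi,\omega_h}=\aip[h]{\theta+\rho,d\omega_h}$ is the one satisfied by $\psi=\sigma_h-i_h^{*}\sigma$, built from the \emph{semidiscrete} solution. But the $\psi$ the paper's proof actually uses, and the only one for which the subsidiary bound $\vn[L^{2}(I,W_h)]{\psi_t}\leq C(\vn[L^{2}(I,W)]{i_h\tilde\sigma_{h,t}-\sigma_t}+\mn{I-J_h}\,\vn[L^{2}(I,W)]{\sigma_t})$ can be proved ``identically to the $\rho_t$ case,'' is $\psi=d_h^{*}\rho=\tilde\sigma_h-i_h^{*}\sigma$, built from the \emph{elliptic projection} (the displayed definition \eqref{eq:error-funcs-psi} contains a typo; the proof and the final estimate both use the tilde version). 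The two differ by exactly $\varepsilon$, so the $\vn[h]{\psi_t}$ appearing in your bound on $\vn[h]{d\varepsilon}$ contains $\vn[h]{\varepsilon_t}$, which nothing else in your argument controls; the chain from \eqref{eq:main-parabolic-estimate-dsig} to the subsidiary estimate does not close. If you instead take $\psi=d_h^{*}\rho$, the relevant identity is $\aip[h]{\rho_t,d\varepsilon}=\aip[h]{\psi_t,\varepsilon}$, the leftover term $\aip[h]{\theta_t,d\varepsilon}=\tfrac12\tfrac{d}{dt}\vn[h]{\varepsilon}^{2}$ no longer cancels pointwise, and you are pushed back to integrating in time --- essentially the paper's argument. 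A second, smaller mismatch: your route produces the pointwise-in-$t$ bound $\vn[h]{d\varepsilon(t)}\leq C(\vn[h]{\psi_t(t)}+\vn[h]{d_h^{*}(\Pi_h-i_h^{*})u_t(t)})$, which neither implies nor is implied by the stated bound in terms of $L^{2}(I,W_h)$ Bochner norms; it would suffice for the downstream integrated estimates, but it is not \eqref{eq:main-parabolic-estimate-dsig} as written.
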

We may further combine these terms, which we shall do in a separate corollary, but it is useful to keep things separate, which allows terms to be analyzed individually when considering specific choices of $V$ and $V_{h}$. The error terms $i_{h} \tilde \sigma_{h} - \sigma$ and $i_{h} \tilde u_{h} - u$ and their time derivatives are furthermore estimated in terms of best approximation norms and variational crimes via the theory of Holst and Stern~\cite{HoSt10a}. The different Bochner norms involved arise from the structure of the error evolution equations.
\begin{proof}
We adapt the proof technique in \cite{T2006, GiHo11a} to our situation, and for ease of notation, unsubscripted norms will denote the $W$-norms and norms subscripted with just $h$ will denote norms on the approximating complex. We now assemble the estimates above separately by computing the $W$-norms of the errors and their differentials. We begin by estimating $\vn[h]{\theta(t)}$. We use the standard technique of using the solutions as their own test functions: Set $\varphi_h = \theta$ and $\omega_h = \varepsilon$ in \eqref{eq:par-error}. Adding the two equations together yields
\begin{equation}
\label{eq:norms-to-ip}
\frac 12 \frac{d}{dt}\vn[h]{\theta}^2 + \vn[h]{\veps}^2 +\vn[h]{d\theta}^{2} = \aip[h]{-\rho_t  +\tilde{p}_{h} +(\Pi_{h} - i_{h}^{*})u_{t},\theta} ,\;\; t\in I
\end{equation}
Following Thom\'ee \cite{T2006}, we introduce $\delta > 0$ to account for non-differentiability at $\theta = 0$, and observe that
\begin{multline*}(\vn[h]{\theta}^2+\delta^2)^{1/2}\frac d{dt}(\vn[h]{\theta}^2+\delta^2)^{1/2}=\frac 12\frac{d}{dt}(\vn[h]{\theta}^2+\delta^2)\\=\frac 12\frac{d}{dt}\vn[h]{\theta}^2 \leq(\vn[h]{\rho_t}  + \vn[h]{\tilde p_h} + \vn[h]{(\Pi_{h}-i_{h}^{*}) u_{t}}) \vn[h]{\theta},
\end{multline*}
using (\ref{eq:norms-to-ip}), the Cauchy-Schwarz inequality, and the definition of operator norms (our goal is to get all of those quantities on the right side of the equation close to zero, so we need not care too much about their sign). Thus, since $\vn[h]{\theta} \leq (\vn[h]{\theta}^2 + \delta^2)^{ 1/2}$, we have, canceling $\vn[h]{\theta}$,
\[
\frac d{dt}(\vn[h]{\theta}^2+\delta^2)^{1/2} \leq \vn[h]{\rho_t}  + \vn[h]{\tilde p_h} + \vn[h]{(\Pi_{h}-i_{h}^{*}) u_{t}}.
\]

Now, using the Fundamental Theorem of Calculus, we integrate from $0$ to $t$ to get
\begin{equation}\label{eqn:theta-est}
\vn[h]{\theta(t)} =  \vn[h]{\theta(0)} + \lim_{\delta \to 0} \int_0^t \frac d{dt}(\vn[h]{\theta}^2+\delta^2)^{1/2}  \leq \int_0^t ( \vn[h]{\rho_t}  + \vn[h]{\tilde p_h} + \vn[h]{(\Pi_{h}-i_{h}^{*}) u_{t}}).
\end{equation}
$\theta(0)$ vanishes by our choice of initial condition as the elliptic projection. 

Next, continuing to follow \cite{GiHo11a}, we consider $\vn[h]{\varepsilon(t)}$. We differentiate the first error equation and substitute $\varphi_{h} = 2\theta_{t}$ and $\omega_{h}=2\varepsilon$, so that
\begin{align}
\aip[h]{\varepsilon_{t},2\varepsilon} - \aip[h]{\theta_{t},2 d\varepsilon} &=0\\
\aip[h]{\theta_{t},2\theta_{t}}+\aip[h]{d\varepsilon,2\theta_{t}}+\aip[h]{d\theta,2d\theta_{t}} &=\aip[h]{-\rho_{t} + \tilde p_{h} + (\Pi_{h}-i_{h}^{*})u_{t},2\theta_{t}}.
\end{align}
Adding the two equations as before, we have, by Cauchy-Schwarz and the AM-GM inequality,
\begin{multline*}
\frac{d}{dt} \vn[h]{\varepsilon}^{2} + 2\vn[h]{\theta_{t}}^{2} + \frac{d}{dt} \vn[h]{d\theta}^{2}\\ \leq 2\vn[h]{\rho_t}\vn[h]{\theta_{t}}  + 2\vn[h]{\tilde p_h}\vn[h]{\theta_{t}} + 2\vn[h]{(\Pi_{h}-i_{h}^{*}) u_{t}}\vn[h]{\theta_{t}}\\ \leq 2\left(\vn[h]{\rho_t}^{2}  + \vn[h]{\tilde p_h}^{2} + \vn[h]{(\Pi_{h}-i_{h}^{*}) u_{t}}^{2}\right) + \tfrac{3}{2}\vn[h]{\theta_{t}}^{2}.
\end{multline*}
Again, dropping some positive terms (this time $\vn[h]{\theta_{t}}^{2}$), using the Fundamental Theorem of Calculus and noting the initial conditions vanish by the choice of elliptic projection, we have
\begin{equation}\label{eqn:eps-squared-d-theta-squared}
\vn[h]{\varepsilon}^{2} + \vn[h]{d\theta}^{2} \leq 2\int_{0}^{t} \left(\vn[h]{\rho_t}^{2}  + \vn[h]{\tilde p_h}^{2} + \vn[h]{(\Pi_{h}-i_{h}^{*}) u_{t}}^{2}\right).
\end{equation}
Finally, we estimate $\vn[h]{d\varepsilon}$. As in the estimate above, we differentiate the first equation with respect to time, and substitute $\omega = 2\varepsilon_{t}$, $\varphi = 2d\varepsilon_{t}$,

\begin{align}
\aip[h]{\varepsilon_{t},2\varepsilon_{t}} - \aip[h]{\theta_{t},2 d\varepsilon_{t}} &=0\\
\aip[h]{\theta_{t},2d\varepsilon_{t}}+\aip[h]{d\varepsilon,2d\varepsilon_{t}}+\aip[h]{d\theta,2dd\varepsilon_{t}} &=\aip[h]{-\rho_{t} + \tilde p_{h} + (\Pi_{h}-i_{h}^{*})u_{t},2d\varepsilon_{t}}.
\end{align}
Noting that $d^{2} = 0$, $\tilde p_{h}$ is perpendicular to the coboundaries, and $\psi = d_{h}^{*} \rho$, we add the equations to get
\begin{multline*}
2\vn[h]{\varepsilon}^{2} + \frac{d}{dt} \vn[h]{d\varepsilon}^{2}  = 2\aip[h]{-\rho_{t} + (\Pi_{h}-i_{h}^{*}) u_{t}, d\varepsilon_{t}} = 2\aip[h]{-\psi_{t}+ d_{h}^{*} (\Pi_{h}-i_{h}^{*}) u_{t}, \varepsilon_{t}}\\
\leq \vn[h]{\psi_{t}}^{2} + \vn[h] {d_{h}^{*} (\Pi_{h}-i_{h}^{*}) u_{t}}^{2} + 2\vn[h]{\varepsilon}^{2}.
\end{multline*}
By the Fundamental Theorem of Calculus, and noting vanishing initial conditions (and an exact cancellation of positive terms), we have
\begin{equation}\label{eqn:d-eps-squared}
\vn[h]{d\varepsilon}^{2} \leq \int_{0}^{t} \left(\vn[h]{\psi_{t}}^{2} + \vn[h] {d_{h}^{*} (\Pi_{h}-i_{h}^{*}) u_{t}}^{2} \right).
\end{equation}

We now estimate $\rho$ and $\psi$. We note that the time derivative of the solutions are also solutions to the mixed formulation, at least provided that $u_{t}$ and other associated quantities are sufficiently regular (in the domain of the Laplace operator) for the norms and derivatives to make sense. Then (recalling $i_{h}^{+} = J_{h}^{-1} i_{h}^{*}$), we have
\begin{multline}
\vn[h]{\rho(t)} = \vn{\tilde u_{h} - i_{h}^{*}u} \leq \vn{\tilde u_{h}- i_{h}^{+} u} +\vn{i_{h}^{+}u- i_{h}^{*} u}\\ \leq\mn{i_{h}^{+}}\left(\vn{i_{h}\tilde u_{h}- u} + \mn{I - J_{h}}\vn{ u}\right),
\end{multline}
and
\begin{multline}
\vn[h]{\psi(t)} = \vn{\tilde \sigma_{h} - i_{h}^{*}\sigma} \leq \vn{\tilde \sigma_{h}- i_{h}^{+} \sigma} +\vn{i_{h}^{+}\sigma- i_{h}^{*} \sigma} \\\leq\mn{i_{h}^{+}}\left(\vn{i_{h}\tilde \sigma_{h}-\sigma} + \mn{I - J_{h}}\,\vn{\sigma}\right).
\end{multline}
The same estimates hold for the time derivatives. The first terms are the estimates that allow us to use the theory of \S\ref{sec:approx-hilb-complex}. We note that the theory acutally uses $V$-norms, but it will work. We cannot improve this in the abstract theory; instead, we use theory for specific choices of $V$, $W$, and $V_{h}$, such as appropriately chosen de Rham complexes and approximations to improve the estimates (\cite[\S3.5]{AFW2010}, \cite[Theorem 3.1]{AC2012}). For these cases, it is helpful to keep the individual estimates on $\vn{\varepsilon}^{2}$, $\vn{\theta}^{2}$, etc. separated. We have combined terms because the abstract theory gives us all the variational crimes together, as it makes heavy use of the bilinear forms above. Additional improvement of estimates based on regularity as done in \cite{AFW2010} cannot made for the variational crimes, as discussed in \cite[\S3.4]{HoSt10a}. We give the relevant example and result in the next section.
\end{proof}
\begin{corollary}[Combined $L^{1}$ estimate] Let $\theta$, $\rho$, $\psi$, and $\varepsilon$ be as above. Then we have
\begin{multline}\label{eq:main-parabolic-estimate-combined}
\vn[L^{1}(I,V)]{i_{h} \sigma_{h} -\sigma} +\vn[L^{1}(I,V)]{i_{h} u_{h} -u}  \leq \\
C\left(\vn[L^{2}(I,W_{h})]{\rho_{t}}  + \vn[L^{2}(I,W_{h})] {(\Pi_{h} - i_{h}^{*})u_{t}} + \vphantom{\vn[L^{2}(I,W_{h})]{\rho_{t}}} \vn[L^{2}(I,W_{h})]{\psi_{t}} + \vn[L^{2}(I,W_{h})]{d^{*}_{h} (\Pi_{h}-i_{h}^{*}) u_{t} } \right. \\ \left. \vphantom{\vn[L^{2}(I,W_{h})]{\rho_{t}}} + \vn[L^{2}(I,W_{h})]{\tilde p_{h}} + \vn[L^{2}(I,V)]{i_{h} \tilde\sigma_{h}-\sigma} + \vn[L^{2}(I,V)]{i_{h}\tilde u_{h} - u}\right).
\end{multline}
Further expanding the time derivative terms, we have
\begin{align*}
\vn[L^{1}(I,V)]{i_{h} \sigma_{h} -\sigma} &+\vn[L^{1}(I,V)]{i_{h} u_{h} -u}  \leq \\
&C\left(\vn[L^{2}(I,W)]{i_{h} \tilde u_{h,t} - u_{t} } +  \vn[L^{2}(I,W)]{i_{h}\tilde \sigma_{h,t} - \sigma_{t}} \right.\\
  &+\mn{I-J_{h}}\, \vn[L^{2}(I,W)]{u_{t}}  +\mn{I-J_{h}}\,\vn[L^{2}(I,W)]{\sigma_{t}} \\
  & +\vn[L^{2}(I,W_{h})] {(\Pi_{h} - i_{h}^{*})u_{t}} + \vn[L^{2}(I,W_{h})]{d^{*}_{h} (\Pi_{h}-i_{h}^{*}) u_{t} }  \\ &\left. + \vn[L^{2}(I,W)]{i_{h} \tilde p_{h}} + \vn[L^{2}(I,V)]{i_{h} \tilde\sigma_{h}-\sigma} + \vn[L^{2}(I,V)]{i_{h}\tilde u_{h} - u}\right).
\end{align*}

\end{corollary}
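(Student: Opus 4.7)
The plan is to reduce the combined $L^1(I,V)$ estimate to the three pointwise-in-time estimates of Theorem~\ref{eq:main-parabolic-estimate} together with the elliptic error estimates from Theorem~\ref{thm:main-estimate-nonzero-harmonic}, by splitting each error via the time-ignorant elliptic projection and then integrating in $t$.

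First I would apply the triangle inequality with the elliptic projection $(\tilde\sigma_h,\tilde u_h)$ as the intermediate, so
\[
\vn[V]{i_h u_h - u} \leq \vn[V]{i_h(u_h - \tilde u_h)} + \vn[V]{i_h \tilde u_h - u},
\]
and analogously for $\sigma$. Using that $i_h$ is a $W$-bounded morphism (so $d$ commutes with $i_h$ up to the constants already absorbed in $C$) and the definition of the graph norm, I bound
\[
\vn[V]{i_h(u_h-\tilde u_h)} \leq C\bigl(\vn[h]{\theta} + \vn[h]{d\theta}\bigr),\qquad \vn[V]{i_h(\sigma_h-\tilde\sigma_h)} \leq C\bigl(\vn[h]{\veps} + \vn[h]{d\veps}\bigr).
\]
This isolates three families of terms that I will control by the three inequalities \eqref{eq:main-parabolic-estimate-u}--\eqref{eq:main-parabolic-estimate-dsig} of the main theorem, plus the two elliptic-projection errors $\vn[V]{i_h\tilde u_h - u}$ and $\vn[V]{i_h\tilde\sigma_h-\sigma}$.

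Next I would integrate in time. Since $I=[0,T]$ is bounded, Cauchy--Schwarz gives the continuous embedding $L^2(I,\cdot)\hookrightarrow L^1(I,\cdot)$ with constant $\sqrt{T}$, which I absorb into $C$. Applying \eqref{eq:main-parabolic-estimate-u} pointwise in $t$ and integrating over $I$ yields
\[
\vn[L^1(I,W_h)]{\theta} \leq C\bigl(\vn[L^1(I,W_h)]{\rho_t} + \vn[L^1(I,W_h)]{\tilde p_h} + \vn[L^1(I,W_h)]{(\Pi_h-i_h^*)u_t}\bigr),
\]
and the $L^1$ norms on the right are in turn dominated (up to $\sqrt{T}$) by the corresponding $L^2$ norms. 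For \eqref{eq:main-parabolic-estimate-du-sig} and \eqref{eq:main-parabolic-estimate-dsig}, the right-hand side is already an $L^2(0,t;W_h)$ quantity, independent of the extra outer $t$; integrating the square root in $t$ gives $L^1(I)$ bounds by the corresponding $L^2(I,W_h)$ norms. Combining these three integrated inequalities produces the first six terms on the right-hand side of \eqref{eq:main-parabolic-estimate-combined}, including the $\vn[L^2(I,W_h)]{\tilde p_h}$ contribution and the two data-interpolation terms in $(\Pi_h-i_h^*)u_t$.

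Finally, I would substitute the estimates for $\vn[L^2(I,W_h)]{\rho_t}$ and $\vn[L^2(I,W_h)]{\psi_t}$ supplied by Theorem~\ref{eq:main-parabolic-estimate}, which replace them by $\vn[L^2(I,W)]{i_h\tilde u_{h,t}-u_t}$, $\vn[L^2(I,W)]{i_h\tilde\sigma_{h,t}-\sigma_t}$, and the non-unitarity terms $\mn{I-J_h}\,\vn[L^2(I,W)]{u_t}$, $\mn{I-J_h}\,\vn[L^2(I,W)]{\sigma_t}$; the remaining elliptic-projection errors $\vn[L^2(I,V)]{i_h\tilde u_h-u}$ and $\vn[L^2(I,V)]{i_h\tilde\sigma_h-\sigma}$ come from the triangle-inequality step above. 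Combining everything yields the first displayed inequality, and the second displayed (expanded) inequality is simply the result of substituting the expressions for $\rho_t$ and $\psi_t$ in place.

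The main obstacle I anticipate is purely bookkeeping: tracking which quantities are naturally controlled in $L^2$ versus $L^1$ in time, and being careful that the application of Cauchy--Schwarz to pass from $L^1$ to $L^2$ is done exactly once per term (so that the constant $C$ only absorbs one factor of $\sqrt{T}$ per term). No new analytic estimates are needed beyond those already established in Theorems~\ref{eq:main-parabolic-estimate} and \ref{thm:main-estimate-nonzero-harmonic}.
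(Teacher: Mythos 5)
Your proposal is correct and follows essentially the same route as the paper: triangle inequality through the elliptic projection, bounding the discrete differences by $\vn[h]{\theta}+\vn[h]{d\theta}$ and $\vn[h]{\veps}+\vn[h]{d\veps}$, invoking the three pointwise estimates of Theorem~\ref{eq:main-parabolic-estimate}, integrating over $I$ with Cauchy--Schwarz to pass between $L^1$ and $L^2$ Bochner norms, and finally substituting the bounds for $\rho_t$ and $\psi_t$. The paper's own proof is just a terser version of exactly this argument, so no further comment is needed.
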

These terms are organized as follows: the $W$-error in the approximations of the time derivatives, the variational crimes with $\vn{I - J_{h}}$, the data approximation error for the time derivatives, and finally the $V$-approximation errors for the elliptic projection. These can be further expanded in terms of best approximation errors, but we will not have use for that outside of specific examples where the computation is easier done with the previous theorems. This corollary is simply stated for conceptual clarity and a qualitative sense of all the different individual contributions to the error.
\begin{proof}
First, we note that by the Cauchy-Schwarz inequality, the estimate for $\vn{d\theta}$ \eqref{eq:main-parabolic-estimate-u} can be rewritten as using $L^{2}(I,W)$ norms to match the squared terms \eqref{eqn:eps-squared-d-theta-squared} and \eqref{eqn:d-eps-squared}. Combining and absorbing constants, we arrive at
\begin{align*}
\vn[V]{i_{h} \sigma_{h}(t) -\sigma(t)} & +\vn[V]{i_{h} u_{h}(t) -u(t)}  \leq 
C\left(\vn[L^{2}(I,W_{h})]{\rho_{t}}  + \vn[L^{2}(I,W_{h})]{(\Pi_{h} - i_{h}^{*})u_{t}} \right.\\ 
& \left. +  \vn[L^{2}(I,W_{h})]{\psi_{t}} + \vn[L^{2}(I,W_{h})]{d^{*}_{h} (\Pi_{h}-i_{h}^{*}) u_{t}}  + \vn[L^{2}(I,W_{h})]{\tilde p_{h}} \right)  \\
&  + \vn[V]{i_{h} \tilde\sigma_{h}(t)-\sigma(t)} + \vn[V]{i_{h}\tilde u_{h}(t) - u(t)} .
\end{align*}
Integrating from $0$ to $T$, the latter two $V$-norm terms become $L^{1}(I,V)$ norms (and absorb the factor of $T$ from integrating the first into the constant). Finally, using Cauchy-Schwarz to change the $L^{1}(I,V)$ norm into an $L^{2}(I,V)$ norm, and substituting for $\rho_{t}$ and $\psi_{t}$ gives the result.
\end{proof}

%%%%%%%%%%%%%%%%%%%%%%%%%%%%%%%%%%%%%%%%%%%%%%%%%%%%%
% Application to parabolic equations on manifolds.

\section{Parabolic Equations on Compact Manifolds}\label{sec:par-eqns-riem}
As an application of the preceding results, we return to our original motivating example of de Rham complex to explore an example with the Hodge heat equation on hypersurfaces of Euclidean space, generalizing the discussion in \cite{HoSt10a,GiHo11a}. Let $M$ be compact hypersurface embedded in $\R^{n+1}$. $M$ inherits a Riemannian metric from the Euclidean metric of $\R^{n+1}$.
\subsection{The de Rham Complex on a Manifold}We define the $L^2$ differential $k$-forms on $M$ given by
\[L^2\Omega^k(M):=\left\{\sum_{1\leq i_{1}<\dots<i_{k}\leq n} a_{i_{1}\dots i_{k}} dx^{i_{1}}\wedge \dots \wedge dx^{i_{k}} \in\Omega^k(M)\;:\;a_{i_{1}\dots i_{k}}\in L^2(M)\right\},\]
the standard indexing of differential form basis elements, namely strictly increasing sequences from $\{1,\dots,n\}$. The inner product is given by $\aip{\omega,\eta} = \int \omega \wedge \star \eta$, where $\star$ is the Hodge operator corresponding to the metric.

The weak exterior derivative $d^{k}$ is defined on the domains $\HO k M$, and we have a Hilbert complex $(L^2\Omega,d)$ with domain complex $(\HO {} M,d)$, with $d^{k+1}\circ d^{k} = 0$:
\[
\xymatrix{
0 \ar[r] & H\Omega^0 \ar[r]^-{d^{0}} & H\Omega^1 \ar[r]^-{d^{1}} & \cdots \ar[r]^-{d^{n-1}} & H\Omega^n \ar[r] & 0.
}
\]
As required in the abstract Hilbert complex theory, each domain space carries the graph inner product:
\[\aip[\HO k M]{u,v}:=\aip[L^{2} \Omega^{k}(M)]{u,v}+ \aip[L^{2}\Omega^{k+1}(M)]{d^{k}u,d^{k}v}.\]
For open subsets $U \subseteq \R^{n}$, the ends ($k = 0$ and $k=n$) of this complex are familiar Sobolev spaces of vector fields with the traditional gradient, curl, and divergence operators of vector analysis:
\[
\xymatrix{
0 \ar[r] & H^1(U) \ar[r]^-{\text{grad}} & H(\operatorname{curl}) \ar[r]^-{\text{curl}}& \cdots \ar[r] &H(\operatorname{div}) \ar[r]^-{\Div} & L^2(U) \ar[r] & 0.
}
\]
Similarly, the dual complex is $\HO[*] {}M$ defined by $H^{*} \Omega^{k}(M) := \star \HO {n-k} M$, consisting of Hodge duals of $(n-k)$-forms. We have that the embedding $\HO k M \cap \HO[*] k M \hookrightarrow \e L^{2}\Omega^{k}(M)$ is compact, which enables a Poincar\'e Inequality to hold and the resulting Hilbert complex $(L^{2}\Omega^{k}(M),d)$ to be a closed complex \cite{Pic84,AFW2010}. To summarize, we have the following:
\begin{theorem} Let $M$ be a compact Riemannian hypersurface in $\R^{n+1}$. Then taking $W^{k} = L^{2} \Omega^{k}(M)$, with maps $d^{k}$ the exterior derivative defined on the domains $V^{k} = \HO k M$, $(W,d)$ is a closed Hilbert complex with domain $(V,d)$.
\end{theorem}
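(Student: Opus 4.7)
The plan is to verify each clause of the definition of a closed Hilbert complex in turn, with the bulk of the work being deferred to the compact embedding lemma and classical facts about the $L^{2}$ de Rham complex, both of which are recalled in the excerpt just before the theorem. First, each $W^{k} = L^{2} \Omega^{k}(M)$ is a Hilbert space by construction, with inner product $\aip{\omega,\eta} = \int_{M} \omega \wedge \star \eta$ descending from the Riemannian metric. The domain $V^{k} = \HO k M$ is dense in $W^{k}$ because the smooth forms $\Omega^{k}(M)$ are a common dense subspace of both; the $V$-inner product is by definition the graph inner product.

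Second, I would show that the weak exterior derivative $d^{k} : \HO k M \to L^{2}\Omega^{k+1}(M)$ is a closed operator. This is essentially built into the distributional definition: if $\omega_{n} \to \omega$ and $d\omega_{n} \to \eta$ in $L^{2}$, then for any compactly supported smooth test form $\varphi$, passing to the limit in the integration-by-parts identity $\int_{M} d\omega_{n} \wedge \star \varphi = \pm \int_{M} \omega_{n} \wedge \star \delta\varphi$ gives $\eta = d\omega$ weakly, so $\omega \in \HO k M$ with $d\omega = \eta$. The cochain property $d^{k+1} \circ d^{k} = 0$ is inherited from its validity on smooth forms by continuity, using the density of $\Omega^{k}(M)$ in $V^{k}$ in the graph norm together with the already-established closedness of $d^{k+1}$.

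Third, and this is the substantive step, I would establish that each image $\f B^{k} = d^{k-1} \HO {k-1} M$ is closed in $L^{2}\Omega^{k}(M)$. The plan is to invoke the compactness of the embedding $\HO k M \cap \HO[*] k M \hookrightarrow L^{2}\Omega^{k}(M)$ already cited in the excerpt (Picard~\cite{Pic84}, see also~\cite{AFW2010}). A standard Peetre-type contradiction argument, combined with the preliminary (unclosed) weak Hodge decomposition $W^{k} = \cl{\f B^{k}} \oplus \f H^{k} \oplus \cl{\f B^{*}_{k}}$, then yields the Poincar\'e inequality $\vn{v} \leq c_{P} \vn{dv}$ on $\f Z^{k\perp}$; this inequality is well known to be equivalent to closedness of the range of each $d^{k}$, and hence of $\f B^{k+1}$.

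The main obstacle is the compactness of the embedding itself, which relies on elliptic regularity for the Hodge Laplacian on the compact manifold $M$ (or equivalently, local Sobolev estimates patched via a partition of unity combined with Rellich--Kondrachov). Since this is precisely the content of the results cited immediately before the theorem statement, the theorem reduces to assembling the pieces, and no new analytic work is required beyond the routine verifications of the algebraic axioms indicated above.
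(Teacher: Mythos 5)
Your proposal is correct and follows the same route the paper takes: the paper offers no separate proof, but justifies the theorem by citing the compactness of the embedding $\HO k M \cap \HO[*] k M \hookrightarrow L^{2}\Omega^{k}(M)$ (Picard, Arnold--Falk--Winther), from which the Poincar\'e inequality and hence closedness of the ranges follow, exactly as in your third step. Your additional explicit verifications of density, closedness of the weak exterior derivative, and the cochain property are routine and consistent with what the paper leaves implicit.
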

We thus are able to define Hodge Laplacians, and see all the abstract theory for the continuous problems \eqref{eqn:mixed-hodge-laplacian-nonzero-harm} and \eqref{eq:boch-mixedweak-time} applies with these choices of spaces. 
\subsection{Approximation of a hypersurface in a tubular neighborhood}
In order to approximate the problems \eqref{eqn:mixed-hodge-laplacian-nonzero-harm} and \eqref{eq:boch-mixedweak-time}, we consider, following \cite{HoSt10a}, a family of approximating hypersurfaces $M_h$ to an oriented hypersurface $M$ all contained in a tubular neighborhood $U$ of $M$. The surfaces $M_{h}$ generally will be piecewise polynomial (say, of degree $s$); the case $s = 1$ corresponds to (piecewise linear) triangulations, studied in \cite{Dziuk88,DeDz06}, and generalized for $s > 1$ in \cite{Demlow2009}. However, the piecewise linear case still is instrumental in the analysis and indeed, the definition of the spaces (via Lagrange interpolation), and so we shall denote it by $T_{h}$ (the triangulation, i.e., set of simplices, will be correspondingly denoted by $\Tau_{h}$, and their images under the interpolation will be denoted $\hat{\Tau}_{h}$). It is convenient, also, to assume that the vertices of the both the triangulation and the higher-degree interpolated surfaces actually lie on the true hypersurface.

The normal vector $\nu$ to the $M$ allows us to define a signed distance function $\dist : U \to \R$ given by \[
\dist(x) = \pm \operatorname{dist}(x,M) = \pm \inf_{y\in M} |x-y|
\]
where the sign is chosen in accordance to which side of the normal $x$ lies on. By elementary theorems in Riemannian geometry \cite[Ch. 6]{doCarmoRG}, $\dist$ is smooth, provided $U$ is small enough; the maximum distance for which it exists is controlled by the sectional curvature of $M$. The normal $\nu$ can be extended to the whole neighborhood; in fact it is the gradient $\nabla \delta$. It is also convenient to define the normals $\nu_{h}$ to the approximating surfaces $M_{h}$. In most of the examples we consider, we assume the vertices of $M_h$ (and $T_{h}$) lie on $M$, but this is not a strict requirement. Instead, we need a condition to ensure that the hypersurfaces $M_{h}$ are diffeomorphic to $M$, eliminating the possibility of a double covering (e.g., as pictured in \cite[Fig. 1, p. 12]{DzEl06}). In particular, we want $M_{h}$ to have the same topology as $M$. This is again restriction on the size of the tubular neighborhood. In such a neighborhood $U$, every $x \in U$ decomposes \emph{uniquely} as
\begin{figure}
\centering
\def\svgwidth{.9\linewidth}
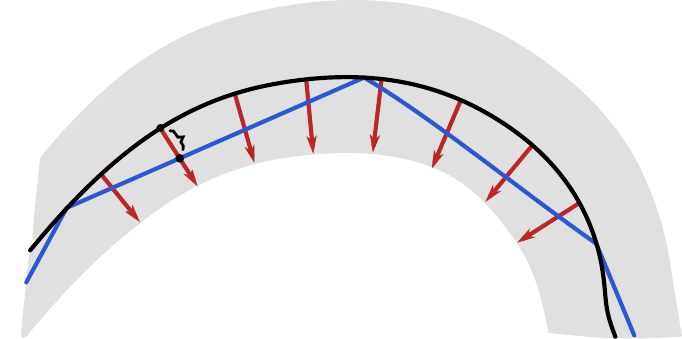
\caption{A curve $M$ with a triangulation (blue polygonal curve $M_{h}$) within a tubular neighborhood $U$ of $M$. Some normal vectors $\nu$ are drawn, in red; the distance function $\delta$ is measured along this normal. The intersection $x$ of the normal with $M_{h}$ defines a mapping $a$ from $x$ to its base point $a(x) \in M$.}\label{fig:tubular-nbhd}
\end{figure}
\begin{equation}
x = a(x) + \dist(x) \nu(x),
\end{equation}
where $a(x) \in M$, and $a : U \to M$ is in fact a smooth function, called the \keyterm{normal projection}. $a$ can then be used to define the degree-$s$ Lagrange interpolated hypersurfaces by considering the image of $T_{h}$ under the degree-$s$ Lagrange interpolation of $a$ over each simplex in $\e T_{h}$ (we write $a_{k} : T_{h} \to M_{h}$ for this) \cite[\S2.3]{Demlow2009}. Now, Holst and Stern~\cite{HoSt10a} show, for hypersurfaces, the following result for the variational crime $\mn{I-J_{h}}$:
\begin{theorem}[Holst and Stern~\cite{HoSt10a}, Theorem 4.4]\label{thm:var-crime-I-Jh-est} Let $M$ be an oriented, compact $m$-dimensional hypersurface in $\R^{m+1}$, and $M_{h}$ be a family of hypersurfaces lying in a tubular neighborhood $U$ of $M$ transverse to its fibers, such that $\vn[\infty]{\delta} \to 0$ and $\vn[\infty]{\nu-\nu_{h}} \to 0$ as $h \to 0$. Then for sufficiently small $h$,
\begin{equation}
\vn{I-J_{h}} \leq C(\vn[\infty] \delta + \vn[\infty]{\nu-\nu_{h}}^{2}).
\end{equation}

\end{theorem}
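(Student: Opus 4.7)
The plan is to express $J_h - I$ as a pointwise comparison of inner products on $M_h$ after pulling back the $M$-integral via the normal projection $a : M_h \to M$, then to bound the resulting integrand by $\vn[\infty]{\delta} + \vn[\infty]{\nu - \nu_h}^2$.

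First, I would fix $\omega_h, \eta_h \in W_h^k = L^2\Omega^k(M_h)$ and write
\[
\aip[h]{(J_h - I)\omega_h, \eta_h} = \aip{i_h \omega_h, i_h \eta_h}_M - \aip[h]{\omega_h, \eta_h}_{M_h}.
\]
Since $i_h \omega_h = (a^{-1})^* \omega_h$ by definition, the change-of-variables formula gives
\[
\aip{i_h \omega_h, i_h \eta_h}_M = \int_{M_h} \langle \omega_h, \eta_h \rangle_{a^{\ast} g_M} \, \mu_h \, d\mathrm{vol}_{g_h},
\]
where $\mu_h$ is the Jacobian of $a$ relating the two volume forms and $\langle \cdot, \cdot \rangle_{a^* g_M}$ is the pointwise inner product induced by the pulled-back metric. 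Thus it suffices to bound the pointwise operator
\[
T_h(x) := \mu_h(x) \langle \cdot, \cdot \rangle_{a^*g_M} - \langle \cdot, \cdot \rangle_{g_h}
\]
in the operator norm on $\Lambda^k T_x M_h$, uniformly in $x$, by $C(\vn[\infty]{\delta} + \vn[\infty]{\nu - \nu_h}^2)$.

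The estimate of $T_h$ splits into two geometric pieces. The first is $\mu_h - 1$: since $a(x) = x - \delta(x)\nu(a(x))$, the derivative $Da$ factors as the composition of $I - \delta \mathcal{W}$ (where $\mathcal{W}$ is the Weingarten map of $M$, pulled back) with an orthogonal identification of $T_x M_h$ with $T_{a(x)} M$ through the normals. Taking determinants one obtains $\mu_h = (\nu \cdot \nu_h)^{-1} \prod_i (1 - \delta \kappa_i)$ or an equivalent form, so Taylor expanding and using that the principal curvatures $\kappa_i$ are uniformly bounded on the compact $M$ yields $|\mu_h - 1| \leq C\vn[\infty]{\delta}$ for $h$ sufficiently small. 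The second piece is the difference of quadratic forms $\langle \cdot, \cdot \rangle_{a^*g_M} - \langle \cdot,\cdot \rangle_{g_h}$. On tangent vectors, both inner products are restrictions of the Euclidean one to $m$-planes whose normals $\nu, \nu_h$ are close; because inner products of tangent vectors are stationary under infinitesimal rotations fixing the plane, the leading-order error in the tangent-space identification comes in at \emph{second} order in $\nu - \nu_h$, while the factor $1 - \delta\mathcal{W}$ contributes a term of order $\delta$. Extending to $k$-forms via the exterior power gives the same scaling with an $h$-independent constant.

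Combining the two bounds, $\|T_h(x)\|_{\mathrm{op}} \leq C(\vn[\infty]{\delta} + \vn[\infty]{\nu - \nu_h}^2)$ pointwise, and inserting into the integral representation of $\aip[h]{(J_h - I)\omega_h, \eta_h}$ together with Cauchy-Schwarz gives the claimed operator-norm estimate. The main obstacle will be the first step of the second paragraph: setting up the decomposition of $Da$ and carefully bookkeeping how $\delta$ and $\nu - \nu_h$ interact so that the cross terms either cancel or are absorbed into one of the two target orders; in particular, one needs to exploit that a pure rotation of the tangent plane preserves the metric exactly, so that only the \emph{symmetric} (second-order) contribution of $\nu - \nu_h$ survives, since a linear bound in $\vn[\infty]{\nu - \nu_h}$ would be too weak for the theorem.
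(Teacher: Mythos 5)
The paper itself states this result without proof (it is imported verbatim from Holst--Stern \cite{HoSt10a}), and your proposal follows essentially the same route as the proof in that reference: pull the $M$-inner product back to $M_h$ via the normal projection, split the pointwise defect into the Jacobian factor and the metric distortion of $Da = P - \delta\mathcal{W}$ restricted to $T_xM_h$, and obtain the quadratic order in $\nu-\nu_h$ from the fact that for $v$ tangent to $M_h$ the normal defect is $\nu\cdot v = (\nu-\nu_h)\cdot v$, so the projection onto $T_{a(x)}M$ perturbs the inner product only at second order (equivalently, $1-\nu\cdot\nu_h = \tfrac12|\nu-\nu_h|^2$). The only slip is cosmetic: your bound $|\mu_h-1|\leq C\vn[\infty]{\delta}$ should read $|\mu_h-1|\leq C(\vn[\infty]{\delta}+\vn[\infty]{\nu-\nu_h}^2)$ because of the $(\nu\cdot\nu_h)^{\pm 1}$ factor, but since both terms appear in the target estimate this does not affect the conclusion.
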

A result of Demlow~\cite[Proposition 2.3]{Demlow2009} states that, in the case that $M_{h}$ is obtained by degree-$s$ Lagrange interpolation, that $\vn[\infty]{\delta} < Ch^{s+1}$ and $\vn[\infty]{\nu-\nu_{h}} < Ch^{s}$. Thus, putting these results together, we have that
\begin{equation}
\mn{I-J_{h}} \leq Ch^{s+1}.
\end{equation}
Now, the three best approximation error terms \eqref{eqn:main-elliptic-errest} for finite element approximation by polynomials of degree $r$ are bounded by $Ch^{r}$, $Ch^{r+1}$, or $Ch^{r-1}$, depending on the component chosen, so it is crucial to allow for this case, and the convergence rate is optimal when $r=s$. Figure \ref{fig:approx-circle} also dramatically demonstrates how much better a higher-order approximation can be with a given mesh size.\begin{figure}
\centering
\includegraphics[width=0.6\linewidth]{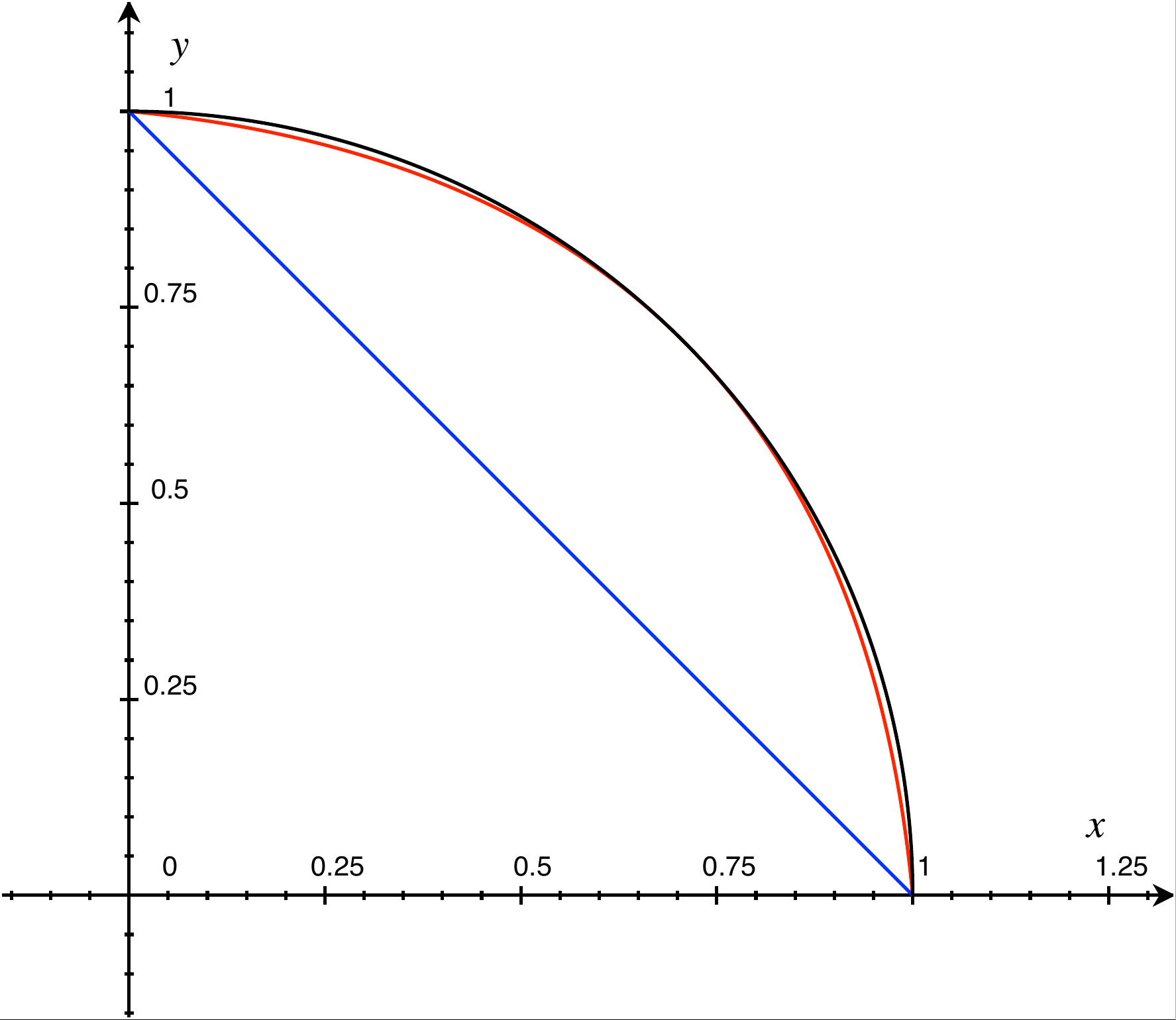}
\caption{Approximation of a quarter unit circle (black) with a segment (blue) and much better quadratic Lagrange interpolation for the normal projection (red), though the underlying triangulation is the same (and thus also the mesh size).}\label{fig:approx-circle}
\end{figure}

Restricting $a$ to the surfaces $M_{h}$ gives diffeomorphisms \[
\rst{a}{M_{h}} : M_{h} \to M.
\]
$a : M_h \to M$ is therefore  a diffeomorphism when restricted to each polyhedron (and is at least globally Lipschitz continuous, the maximum degree of regularity in the piecewise linear case. This is not a problem for Hodge theory, because the form spaces are at most $H^1$ where regularity is concerned; see \cite{Wlok92}). See Figure \ref{fig:tubular-nbhd}.

\subsection{Finite element spaces}
We  choose finite-dimensional subspaces $\Lambda^{k}_h$ of $H\Omega^{k}(M_h)$ for each $k$, satisfying the subcomplex property $d_{h}\Lambda^k_h \subseteq \Lambda^{k+1}_h$. We can then pull forms on $M_h$ back to forms on $M$ via the inverse of the normal projection, which furnishes the injective morphisms $i^k_h : \Lambda^k_h \hookrightarrow H\Omega^{k}(M)$ (since pullbacks commute with $d$) required by the theory above in Section \ref{sec:feec}. The main finite element spaces relevant for our purposes are two families of piecewise polynomials, discussed in detail in \cite{AFW2006,AFW2010}. We must choose these spaces for our equations in a specific relationship in order for the numerical methods and theory detailed above to apply, and for the approximations to work. This is why we prefer a piecewise polynomial approximation of $M$  as opposed to a curved triangulation of $M$ itself; these are shown to have these necessary properties.  
\begin{definition}[Polynomial differential forms]\index{differential form!polynomial finite elements}
Let $\poly_r$ denote polynomials of degree at most $r$, in $n$ variables, and $\mathcal H_r$ be the subspace of homogeneous polynomials. We define the first family, denoted $\poly_r\Lambda^k(\Tau)$, to consist of all $k$-forms with coefficients belonging to $\poly_r$ when restricted to each $n$-simplex of $\Tau$. The continuity condition is that the polynomials on two simplices having a common face must have the same trace to that face. The second family, denoted $\poly_r^-\Lambda^k(\Tau)$, are intermediate spaces, between the spaces of the first class:
\[\poly_{r-1}\Lambda^k(\Tau)\subsetneq \poly_r^-\Lambda^k(\Tau) \subsetneq \poly_r\Lambda^k(\Tau).\]
These are defined as follows: first, consider the radial vector field $X = x^{i} \frac{\partial}{\partial x^{i}}$, that is, at each $x$, it is a radially pointing vector of length $|x|$, and then define the \keyterm[Koszul differential]{Koszul operator} $\kappa \omega := X \lrcorner\, \omega$, the interior product with $X$. Then \[
\poly_r^-\Lambda^k(\Tau):=\poly_{r-1}\Lambda^k\oplus\kappa\mathcal H_{r-1}\Lambda^{k+1}.
\] This is a direct sum, since $\kappa$ always raises polynomial degree and decreases form degree, so yields homogeneous polynomials of degree $r$. $\kappa$ is in some ways dual to the operator $d$ (which, in particular, increases form degree and decreases polynomial degree), and by the properties of interior products, $\kappa^{2} = 0$.
\end{definition}

These polynomial spaces generalize existing finite element spaces, such as Whitney forms, N\'ed\'elec elements, and Raviart-Thomas elements (see \cite{GiHo11a,AFW2010} for these examples and more), realizing the collection and clarification of previous results respecting vector methods, as we have mentioned numerous times throughout this work.

The important property of these spaces is that they admit the cochain projections whose role we have seen is so important in the theory. First, we describe the case where $M = U$ is a domain in $\R^{n}$ with smooth or Lipschitz boundary.
\begin{equation}
\label{eq:coch-proj-def}
\pi_h^k:L^2\Omega^k\raw  \Lambda^k_h\quad\text{where~$\Lambda^k_h\in\{\poly_r\Lambda^k(\Tau),~\poly_r^-\Lambda^k(\Tau)\}$}.
\end{equation}
These operators, by virtue of their construction, are uniformly bounded (in $L^2\Omega^k$, not just $H\Omega^{k}$) with respect to $h$. 
Finally, the following theorem explicitly expresses the projection error (and hence, best approximation error) in terms of powers of the mesh size $h$ and the norms of the solution. \index{bounded cochain projection!for open subsets of Rn@for open subsets of $\R^{n}$}
\begin{theorem}[Arnold, Falk, and Winther \cite{AFW2010}, Theorem 5.9]\hfill
\label{thm:afw-5pt9} 
\renewcommand{\labelenumi}{(\roman{enumi}.)}
\begin{enumerate}
\item Let $\Lambda^k_h$ be one of the spaces $\poly^-_{r+1}\Lambda^k(\Tau)$ or, if $r\geq 1$, $\poly_r\Lambda^k(\Tau)$.  Then $\pi^k_h$ is a cochain projection onto $\Lambda^k_h$ and satisfies
\[\vn{\omega-\pi^k_h\omega}_{L^2\Omega^k(U)}\leq ch^s\vn{\omega}_{H^s\Omega^k(U)},\quad \omega\in H^s\Omega^k(U),\]
for $0\leq s\leq r+1$.  Moreover, for all $\omega\in L^2\Omega^k(U)$, $\pi^k_h\omega\raw\omega$ in $L^2$ as $h\raw 0$.
\item Let $\Lambda^k_h$ be one of the spaces $\poly_r\Lambda^k(\Tau)$ or $\poly^-_r\Lambda^k(\Tau)$ with $r\geq 1$.  Then 
\[\vn{d(\omega-\pi^k_h\omega)}_{L^2\Omega^k(U)}\leq ch^s\vn{d\omega}_{H^s\Omega^k(U)},\quad \omega\in H^s\Omega^k(U),\]
for $0\leq s\leq r$.
\end{enumerate}
\end{theorem}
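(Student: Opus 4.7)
The plan is to construct $\pi_h^k$ as a \emph{smoothed projection}: the canonical interpolation operator defined by the natural degrees of freedom of $\poly_r\Lambda^k(\Tau)$ or $\poly_r^-\Lambda^k(\Tau)$, composed with a suitable mollifier and then corrected to be idempotent on $\Lambda_h^k$. The canonical interpolant $I_h^k$ commutes with $d$ because its degrees of freedom (traces against appropriate dual polynomial forms on subsimplices) interact with $d$ via Stokes' theorem, but $I_h^k$ is not defined on all of $L^2\Omega^k$ since the trace moments require some regularity. To remedy this, I would introduce a mollification $R_\epsilon$ at scale $\epsilon = ch$, obtained by extending forms into a neighborhood of $U$ and convolving coefficients with a fixed compactly supported smooth kernel. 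Convolution commutes with $d$, and $R_\epsilon$ is $L^2$-bounded uniformly in $\epsilon$.

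The composition $Q_h := I_h^k R_\epsilon$ is then $L^2$-bounded, commutes with $d$, and maps into $\Lambda_h^k$, but is not yet a projection. A scaling and perturbation argument shows that, for $\epsilon = ch$ with $c$ small enough, $Q_h|_{\Lambda_h^k}$ differs from the identity by an operator of small norm, hence is invertible with a uniformly bounded inverse $J_h$. Setting $\pi_h^k := J_h \circ Q_h$ yields a uniformly $L^2$-bounded cochain projection onto $\Lambda_h^k$.

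For part (i), the error bound proceeds element by element. On a reference simplex $\hat T$, the operator $I_h^k$ preserves polynomial forms of the appropriate degree, so the Bramble--Hilbert lemma gives $\|\hat\omega - I_h^k \hat\omega\|_{L^2(\hat T)} \leq c\,|\hat\omega|_{H^s(\hat T)}$ for $0 \leq s \leq r+1$. Standard affine scaling transplants this to a physical simplex $T$ with factor $h_T^s$, and the mollification error satisfies $\|\omega - R_\epsilon \omega\|_{L^2} \leq c\epsilon^s |\omega|_{H^s}$ on a slightly enlarged patch. Combining these with the uniform boundedness of $J_h$ and summing over simplices yields $\|\omega - \pi_h^k \omega\|_{L^2\Omega^k(U)} \leq ch^s \|\omega\|_{H^s\Omega^k(U)}$. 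Strong $L^2$ convergence for general $\omega \in L^2\Omega^k(U)$ follows from density of smooth forms together with the uniform $L^2$-boundedness of $\pi_h^k$ via a standard three-epsilon argument. For part (ii), one exploits the cochain property: $d(\omega - \pi_h^k \omega) = d\omega - \pi_h^{k+1} d\omega$, so applying part (i) to $d\omega \in H^s\Omega^{k+1}$ with the target space $\Lambda_h^{k+1}$ (which, under the stated hypotheses on $r$, contains the requisite polynomial forms) immediately yields the bound.

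The main obstacle is the invertibility step: establishing that $Q_h|_{\Lambda_h^k}$ is close enough to the identity that $J_h = (Q_h|_{\Lambda_h^k})^{-1}$ exists with a bound uniform in $h$. This requires a delicate scaling analysis tying $\epsilon$ to $h$ so that $I - Q_h$ has small operator norm on $\Lambda_h^k$, while simultaneously preserving commutativity with $d$ across the whole complex. A secondary technical nuisance is the treatment of the boundary of $U$: the mollifier $R_\epsilon$ must be constructed using an extension operator that does not destroy either the regularity-preservation estimates or the commuting property. Once these ingredients are in place, the Bramble--Hilbert and scaling arguments deliver both parts of the theorem in a uniform way across the families $\poly_r\Lambda^k$ and $\poly_r^-\Lambda^k$.
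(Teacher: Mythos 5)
This theorem is imported verbatim from Arnold--Falk--Winther (their Theorem~5.9); the paper you are reading does not prove it, but only remarks afterwards that the operators are ``natural interpolation operators \dots combined with smoothings.'' Your sketch is essentially the construction in that cited source: canonical interpolant $I_h^k$ commuting with $d$ via Stokes' theorem, precomposed with an extension-plus-mollification $R_\epsilon$ at scale $\epsilon\sim h$, and corrected to a genuine projection by inverting the restriction of $I_h^kR_\epsilon$ to $\Lambda_h^k$ (a perturbation/scaling argument showing this restriction is uniformly close to the identity). The reduction of part (ii) to part (i) via $d\pi_h^k=\pi_h^{k+1}d$ is also exactly right. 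Two points deserve attention. First, a minor notational clash: you call the correction operator $J_h$, which in this paper already denotes $i_h^*i_h$. Second, and more substantively, your error estimate for part (i) splits the error into an interpolation error plus a mollification error $\vn{\omega-R_\epsilon\omega}_{L^2}\leq c\epsilon^s|\omega|_{H^s}$; for $s>1$ that bound requires a higher-order (moment-vanishing) kernel, which a plain mollifier does not provide. The cleaner and standard route, once uniform $L^2$-boundedness of the projection $\pi_h^k$ is in hand, is quasi-optimality: $\vn{\omega-\pi_h^k\omega}\leq(1+\mn{\pi_h^k})\inf_{v\in\Lambda_h^k}\vn{\omega-v}$, followed by the known $O(h^s)$ best-approximation property of $\poly_r\Lambda^k(\Tau)$ and $\poly_{r+1}^-\Lambda^k(\Tau)$ for $0\leq s\leq r+1$. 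With that substitution your argument is complete; the genuinely delicate step, as you correctly identify, is the uniform invertibility of $Q_h|_{\Lambda_h^k}$ and the boundary-respecting extension, both of which are the technical heart of the cited construction.
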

These bounded cochain operators are explicitly constructed in \cite{AFW2006,AFW2010}; they are the natural interpolation operators defined for continuous differential forms and analogous to polynomial interpolation operators on functions, but combined with smoothings to allow extension to $H^{s}$ differential forms which may not necessarily be continuous.

\begin{example}[The Mixed Hodge Laplacian problem on an open subset of $\R^{n}$]\index{finite element spaces!for domains in Rn@for domains in $\R^{n}$}
For the mixed Hodge Laplacian problem we considered above, we must choose $\Lambda_{h}^{k-1}$ and $\Lambda_{h}^{k}$ in such a manner such that $d\Lambda_{h}^{k-1} \subseteq \Lambda_{h}^{k}$; one cannot make the choices of spaces completely independent of one another for our mixed problem \cite[\S5.2]{AFW2010}. For example, if we choose $\Lambda_{h}^{k-1} = \poly_{r}\Lambda^{k-1}(\Tau_{h})$, we necessarily must choose \[\Lambda_{h}^{k} \in \left\{\poly_{r-1}\Lambda^{k}(\Tau_{h}),\ \poly_{r}^{-}\Lambda^{k}(\Tau_{h})\right\}.\] Similarly, for $\Lambda_{h}^{k-1} = \poly^{-}_{r}\Lambda^{k-1}(\Tau_{h})$, we choose \[
\Lambda_{h}^{k} \in \left\{ \poly_{r}^{-} \Lambda^{k}(\Tau_{h}),\ \poly_{r-1} \Lambda^{k}(\Tau_{h})\right\}.
\] Continuing in this manner down the complex, there are $2^{n}$ possible full cochain subcomplexes one can form with these choices of spaces. Of course, for one single Hodge Laplacian problem, we only need to work with three spaces in the chain, since the equations only involve $(k-1)$- and $k$-forms and their differentials.
\end{example}

\begin{example}[Finite Element Spaces on Riemannian manifolds]\index{finite element spaces!for Riemannian manifolds}
Now, suppose we are back in the situation with a Riemannian hypersurface $M \subseteq \R^{n+1}$, with a family of degree-$s$ Lagrange-interpolated surfaces $M_{h}$, over a triangulation $T_{h}$. We can still consider the polynomial finite element spaces on the triangulation $\Tau_{h}$ as before; the only difference here is that the simplices may not join up smoothly (i.e., as a manifold, it may have corners). This is not a problem, because the continuity conditions enforced by the finite element spaces also allow for discontinuities or non-classical-differentiability on the simplicial boundary faces. To define the analogous polynomial spaces on the possibly curved triangulations $M_{h}$, we simply say a form is in the analogous polynomial spaces $\poly_{r} \Lambda^{k}(\hat{\Tau}_{h})$ if its pullback by the inverse of the interpolated normal projections $a_{k} : T_{h} \to M_{h}$ to $T_{h}$ is in $\poly_{r}\Lambda^{k}(\Tau_{h})$ \cite[\S2.5]{Demlow2009}. Now, from $\poly_{r}\Lambda^{k}(\hat \Tau_{h})$, we pull these forms back to the surface $M$ via the normal projections $(\rst{a}{M_{h}})^{-1}$. This gives the injective morphisms $i_{h}^{k} : \Lambda_{h}^{k} \to \HO k M$; it commutes with the differentials, since the pullbacks do.

For the bounded cochain operators, the situation is similar. We have $\pi^{\prime k}_{h} : \HO k {M_{h}} \to \Lambda_{h}^{k}$ a cochain projection defined by pulling forms defined in neighborhoods back to the triangulations (using the trace theorem if necessary), as constructed in \cite{AFW2006,AFW2010}. Then we compose with the pullbacks $(\rst{a}{M_{h}})^{*}$. This gives us the cochain projections $\pi_{h}^{k} : H \Omega^{k}( M) \to \Lambda_{h}^{k}$ (by  \cite[Theorem 3.7]{HoSt10a}).
\end{example}
\subsection{Estimates for the Mixed Hodge Laplacian problem on manifolds}\index{error estimates!for the parabolic Hodge Laplacian problem}
With this, we can then integrate the terms from \cite[Example 4.6]{HoSt10a} to get the results for the parabolic equations (or, equivalently, add the variational crimes to \cite{GiHo11a,AC2012}). Let us consider now the mixed Hodge Laplacian problem on Riemannian hypersurfaces, considering the setup in the previous example. Namely, we consider $W^{k} = \e L^{2}\Omega^{k}(M)$, $V^{k} = \HO k M$ as above, the approximating spaces  $V_{h}^{k-1} = \poly_{r+1} \Lambda^{k-1}(\hat{\e T}_{h})$ and $V_{h}^{k} = \poly_{r}\Lambda^{k}(\hat{\e T}_{h})$, and finally the inclusion and projection morphisms as above (possibly with additional pullbacks for interpolation degree $s > 1$). Of course, as mentioned before, these are not the only ways of choosing the spaces, but we stay with, and make estimates based on, this choice for the remainder of this example (the same choice made in \cite[Example 4.6]{HoSt10a}). For a function $\tilde{f} \in L^{2} \Omega^{k}(M)$, we have an approximate solution $(\sigma'_{h},u'_{h},p'_{h}) \in i_{h} \f X'_{h}$ to the elliptic problem, on the true subcomplex $i_{h} W_{h}$ (with modified inner product, as in the theory of \S\ref{sec:our-extension}). For $\tilde f$ sufficiently regular, and $(\sigma,u,p)$ satisfying the regularity estimate \cite{AFW2010,GiHo11a}
\begin{equation}\label{eqn:regularity-estimate}
\vn[H^{s+2}]{u} +\vn[H^{s+2}]{p} + \vn[H^{s+1}]{\sigma} \leq C\vn[H^{s}]{\tilde f},
\end{equation}
for $0 \leq s \leq s_{\max}$, then, since we are in the de Rham complex, where the cochain projections are $W$-bounded, we have the improved error estimates of Arnold, Falk, and Winther~\cite[\S3.5 and p. 342]{AFW2010} for the elliptic problem:
\begin{align}
\vn{ u - i_{h}u_{h}'} +\vn{p-i_{h}p_{h}'} &\leq Ch^{r+1}\vn[H^{r-1}]{\tilde f} \label{eqn:improved-u}\\
\vn{ d(u - i_{h}u_{h}')} +\vn{\sigma-i_{h}\sigma_{h}'} &\leq Ch^{r}\vn[H^{r-1}]{\tilde f} \label{eqn:improved-du-sig}\\
\vn{ d(\sigma - i_{h}\sigma_{h}')} &\leq Ch^{r-1}\vn[H^{r-1}]{\tilde f}.
\end{align}
We should also note that Arnold and Chen~\cite{AC2012} prove that this also works for a nonzero harmonic part \cite[Theorem 3.1]{AC2012}.
Holst and Stern~\cite{HoSt10a} augment these estimates to include the variational crimes, so that (changing the notation to suit our problem) for $(\tilde \sigma_{h} , \tilde u_{h}, \tilde p_{h}) \in \f X_{h}$, the discrete solution to the elliptic problem now on the approximating complexes we have chosen, we have the estimates
\begin{multline}\label{eqn:combined-vcs-estimate}
\vn{u- i_{h}\tilde u_{h}} + \vn{p- i_{h}\tilde p_{h}} + h\left(\vn{d(u-i_{h}\tilde u_{h})} + \vn{\sigma - i_{h}\tilde\sigma_{h}} \right) \\
+ h^{2} \vn{d(\sigma - i_{h}\tilde\sigma_{h})} \leq C( h^{r+1}\vn[H^{r-1}]{\tilde f} + h^{s+1}\vn{\tilde f}).
\end{multline}
We note the terms associated to the different powers of $h$ above correspond exactly to the breakdown \eqref{eq:main-parabolic-estimate-u}-\eqref{eq:main-parabolic-estimate-dsig} above. For the elliptic projection in our problem, we also need to account for the nonzero harmonic part of the solution. Setting $\tilde w = P_{\f H} \tilde u$ and $\tilde w_{h} = \Pi_{h} \tilde w$, we have that our three additional terms (given by Theorem \ref{thm:main-estimate-nonzero-harmonic} above) are the corresponding best approximation error $\inf_{v \in V_{h}^{k}}\vn[V]{\tilde w-v}$,  the $\mn{I-J_{h}}$ term, and the data approximation $\vn[h]{\tilde w_{h} - i_{h}^{*}\tilde w}$. For the best approximation, we make use of our observation about the inequality \eqref{eqn:I-PH-to-inf}, in which we may instead use the $W$-norm instead of the $V$-norm in the case that the projections are $W$-bounded, as they are here in the de Rham complex. Because $\tilde w$ is harmonic, it is smooth (and in particular, in $H^{r+1}$), so we may apply Theorem \ref{thm:afw-5pt9} to find that it is of order $Ch^{r+1}\vn[H^{r+1}]{\tilde w}$. The $\mn{I-J_{h}}$ term has already been shown to be of order $Ch^{s+1}$ above in Theorem \ref{thm:var-crime-I-Jh-est}. Finally, by Theorem \ref{thm:fam-of-projections} above, we have that data approximation splits into the other two terms. Therefore, to summarize, we have
\begin{theorem}[Estimates for the elliptic projection]\index{error estimates!for the elliptic projection} Consider $(\sigma(t),u(t))$, the solution to the parabolic problem \eqref{eq:boch-mixedweak-time} and $(\sigma_{h}(t),u_{h}(t))$ the semidiscrete solution in \eqref{eq:par-semidisc} above. Then we have the following estimates for the elliptic projection $(\tilde \sigma_{h},\tilde u_{h},\tilde p_{h})$:
\begin{multline}\label{eqn:combined-vcs-estimate-elliptic-proj}
\vn{u- i_{h}\tilde u_{h}} + \vn{i_{h}\tilde p_{h}} + h\left(\vn{d(u-i_{h}\tilde u_{h})} + \vn{\sigma - i_{h}\tilde\sigma_{h}} \right) \\
+ h^{2} \vn{d(\sigma - i_{h}\tilde\sigma_{h})} \leq C\left( h^{r+1}\left(\vn[H^{r-1}]{\Delta u}+\vn[H^{r+1}]{\tilde w}\right) + h^{s+1}\left(\vn{\Delta u} + \vn{\tilde w}\right)\right).
\end{multline}

\end{theorem}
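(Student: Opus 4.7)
The plan is to view the elliptic projection $(\tilde\sigma_h,\tilde u_h,\tilde p_h)$ at each fixed time $t$ as the Holst--Stern discrete solution of the mixed Hodge Laplacian problem with nonzero prescribed harmonic part. Concretely, set $\tilde f:=-\Delta u(t)$ and $\tilde w:=P_{\f H}u(t)$; then the continuous solution of \eqref{eqn:mixed-hodge-laplacian-nonzero-harm} with this data is exactly $(\sigma(t),u(t),0)$ (since $u$ itself solves the elliptic equation with right-hand side $-\Delta u$, and $\sigma = d^{*}u$), while $(\tilde\sigma_h,\tilde u_h,\tilde p_h)$ is precisely the solution of \eqref{eqn:mixed-hodge-laplacian-discrete-vcs-nonzero-harm} with interpolated data $f_h=\Pi_h\tilde f$ and $w_h=\Pi_h\tilde w$. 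Thus Theorem \ref{thm:main-estimate-nonzero-harmonic} applies and provides the full error estimate \eqref{eqn:main-elliptic-errest}, which I will then post-process into the three groupings weighted by $1$, $h$, and $h^{2}$.

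First I would split the left-hand side of \eqref{eqn:main-elliptic-errest} along the three natural scales suggested by the semidiscretization analysis: the undifferentiated component $\vn{u-i_h\tilde u_h}+\vn{i_h\tilde p_h}$, the mixed group $\vn{d(u-i_h\tilde u_h)}+\vn{\sigma-i_h\tilde\sigma_h}$, and the top-derivative term $\vn{d(\sigma-i_h\tilde\sigma_h)}$. Because we are working in the de Rham complex with a $W$-bounded family of cochain projections $\pi_h'$ on $M_h$ composed with normal-projection pullbacks, the improved AFW estimates \eqref{eqn:improved-u}--\eqref{eqn:improved-du-sig} (together with the extension to nonzero harmonic part by Arnold--Chen) control the intermediate quantities $\vn{u-i_h u_h'}+\vn{p-i_h p_h'}$, $\vn{d(u-i_h u_h')}+\vn{\sigma-i_h\sigma_h'}$, and $\vn{d(\sigma-i_h\sigma_h')}$ by $Ch^{r+1}$, $Ch^{r}$, and $Ch^{r-1}$ times $\vn[H^{r-1}]{\tilde f}=\vn[H^{r-1}]{\Delta u}$, respectively. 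Multiplying the second grouping by $h$ and the third by $h^{2}$ upgrades each to order $h^{r+1}\vn[H^{r-1}]{\Delta u}$, matching the right-hand side I want.

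Next I will treat the variational crimes terms in \eqref{eqn:main-elliptic-errest}, namely $\mn{I-J_h}(\vn{\tilde f}+\vn{\tilde w})$, $\vn[h]{f_h-i_h^{*}\tilde f}$, $\vn[h]{w_h-i_h^{*}\tilde w}$, and the extra best approximation $\inf_{\xi}\vn[V]{\tilde w-\xi}$. Theorem \ref{thm:var-crime-I-Jh-est} and the Demlow estimates $\vn[\infty]{\delta}\lesssim h^{s+1}$, $\vn[\infty]{\nu-\nu_h}\lesssim h^{s}$ give $\mn{I-J_h}\leq Ch^{s+1}$, producing the $h^{s+1}(\vn{\Delta u}+\vn{\tilde w})$ contribution directly. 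For the data interpolation errors I invoke Theorem \ref{thm:fam-of-projections}, which rewrites each of them as $\mn{I-J_h}\vn{\cdot}$ plus a best approximation term; the best approximation of $\tilde f=-\Delta u$ by $\poly_{r}\Lambda^{k}(\hat{\mathcal T}_h)$ is bounded using Theorem \ref{thm:afw-5pt9} by $Ch^{r+1}\vn[H^{r-1}]{\Delta u}$ (since that theorem's improved bound uses the $W$-boundedness of the projection here), and the best approximation of $\tilde w$, which is smooth because it is harmonic, by $Ch^{r+1}\vn[H^{r+1}]{\tilde w}$. The same observation, together with the remark after \eqref{eqn:I-PH-to-inf} that in the $W$-bounded case one may replace the $V$-norm there by the $W$-norm, handles the extra $\inf_{\xi\in i_h V_h^{k}}\vn{\tilde w-\xi}$ term with the same bound $Ch^{r+1}\vn[H^{r+1}]{\tilde w}$.

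Assembling these pieces with the triangle inequality across the three scales, multiplying the middle and top-derivative groups by $h$ and $h^{2}$ so that their orders harmonize at $h^{r+1}$, and then absorbing all variational-crime contributions into the $h^{s+1}(\vn{\Delta u}+\vn{\tilde w})$ term yields the advertised inequality \eqref{eqn:combined-vcs-estimate-elliptic-proj}. The main obstacle is purely bookkeeping: one must be sure that the AFW/AC improved regularity chain allowing the $h^{r+1}$ bound on $\vn{u-i_h u_h'}$ genuinely survives the inclusion of a nonzero prescribed harmonic part, and that the weight factors $h$ and $h^{2}$ are applied to precisely those combinations whose unscaled orders are $h^{r}$ and $h^{r-1}$; the rest follows directly from Theorems \ref{thm:main-estimate-nonzero-harmonic}, \ref{thm:var-crime-I-Jh-est}, \ref{thm:afw-5pt9}, and \ref{thm:fam-of-projections}.
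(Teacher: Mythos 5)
Your proposal is correct and follows essentially the same route as the paper: view the elliptic projection at each fixed $t$ as the Holst--Stern discrete solution of the mixed problem with data $\tilde f=-\Delta u(t)$ and prescribed harmonic part $\tilde w=P_{\f H}u(t)$ (whose continuous solution is $(\sigma(t),u(t),0)$), apply Theorem \ref{thm:main-estimate-nonzero-harmonic} together with the combined estimate \eqref{eqn:combined-vcs-estimate}, and bound the three extra harmonic-part terms via Theorems \ref{thm:afw-5pt9}, \ref{thm:var-crime-I-Jh-est}, and \ref{thm:fam-of-projections} and the $W$-norm remark after \eqref{eqn:I-PH-to-inf}. The only blemish is your attribution of the rate $Ch^{r+1}\vn[H^{r-1}]{\Delta u}$ to a best-approximation bound from Theorem \ref{thm:afw-5pt9} (which would give $Ch^{r-1}\vn[H^{r-1}]{\Delta u}$ or $Ch^{r+1}\vn[H^{r+1}]{\Delta u}$, not the mixed exponent you state); that rate actually comes from the duality-based improved estimates of \cite{AFW2010} under the regularity assumption \eqref{eqn:regularity-estimate}, exactly as packaged in \eqref{eqn:combined-vcs-estimate}, so the conclusion is unaffected.
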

(We note $p =  P_{\f H}(-\Delta u)  = 0$.) We now would like use the our main parabolic estimates to analyze the analogous quantity
\begin{equation}\label{eqn:combined-vcs-estimate-evol}
\vn{u(t)- i_{h} u_{h}(t)}  + h\left(\vn{d(u(t)-i_{h} u_{h}(t))} + \vn{\sigma(t) - i_{h}\sigma_{h}(t)} \right) 
+ h^{2} \vn{d(\sigma(t) - i_{h}\sigma_{h}(t))},
\end{equation}
and its integral, i.e. Bochner $L^{1}$ norm.
\begin{theorem}[Main combined error estimates for Riemannian hypersurfaces] Let $(\sigma(t),u(t))$, $(\sigma_{h}(t),u_{h}(t))$, and all terms involving the elliptic projection are defined as above, and the regularity estimate \eqref{eqn:regularity-estimate} is satisfied. Then
\begin{multline*}
\vn[L^{1}(W)]{u- i_{h} u_{h}}  \\ + h\left(\vn[L^{1}(W)]{d(u-i_{h} u_{h})}  + \vn[L^{1}(W)]{\sigma - i_{h}\sigma_{h}} \right) 
+ h^{2} \vn[L^{1}(W)]{d(\sigma - i_{h}\sigma_{h})} \\ \leq C\left[h^{r+1}\left((T+1)\left(\vn[L^{1}(H^{r-1})]{\Delta u}+\vn[L^{1}(H^{r+1})]{\tilde w}\right) + T\left(\vn[L^{1}(H^{r-1})]{\Delta u_{t}} +\vn[L^{1}(H^{r+1})]{\tilde w_{t}}\right) \right) \right.\\+ \left. h^{s+1}\left((T+1)\left(\vn[L^{1}(W)]{\Delta u}  +\vn[L^{1}(W)]{\tilde w}\right) + T\left(\vn[L^{1}(W)]{\Delta u_{t}} +\vn[L^{1}(W)]{\tilde w_{t}}\right) \right)\right].
\end{multline*}
\end{theorem}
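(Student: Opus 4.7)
The plan is to split each norm on the left-hand side by the triangle inequality through the elliptic projection, so that
\[
\vn{u-i_h u_h} \leq \vn{u-i_h\tilde u_h} + \vn{i_h\theta},\qquad \vn{\sigma - i_h\sigma_h} \leq \vn{\sigma - i_h\tilde\sigma_h} + \vn{i_h\varepsilon},
\]
and analogously for $d(u-i_h u_h)$ and $d(\sigma - i_h \sigma_h)$, using $\vn{i_h \cdot} \leq \mn{i_h}\vn[h]{\cdot}$. The first summand in each case is the instantaneous elliptic-projection error, controlled at each $t$ by estimate~\eqref{eqn:combined-vcs-estimate-elliptic-proj} applied with load $-\Delta u(t)$ and prescribed harmonic part $\tilde w(t)=P_{\f H}u(t)$. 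Integrating pointwise in $t$ over $I=[0,T]$ converts those four terms to $L^1(I,W)$ norms and immediately produces the contributions containing $(T+1)(\vn[L^1(H^{r-1})]{\Delta u}+\vn[L^1(H^{r+1})]{\tilde w})$ at order $h^{r+1}$ and the analogous $h^{s+1}$ terms (absorbing $T$ into the constant where appropriate).

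For the second summands I would invoke the main parabolic Theorem~\ref{eq:main-parabolic-estimate}, applied pointwise in $t$, giving
\[
\vn[h]{\theta(t)}+\vn[h]{d\theta(t)}+\vn[h]{\varepsilon(t)}+\vn[h]{d\varepsilon(t)} \;\leq\; C\bigl(\vn[L^2(I,W_h)]{\rho_t}+\vn[L^2(I,W_h)]{\psi_t} +\vn[L^2(I,W_h)]{\tilde p_h}+\text{data terms}\bigr).
\]
The key observation is that time-differentiating the elliptic projection system~\eqref{eq:par-to-ellip} shows that $(\tilde\sigma_{h,t},\tilde u_{h,t},\tilde p_{h,t})$ is itself the elliptic projection of the differentiated problem with load $-\Delta u_t$ and prescribed harmonic part $\tilde w_t$, assuming enough regularity for these derivatives to live in the relevant spaces. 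Consequently, via the corollary of Theorem~\ref{eq:main-parabolic-estimate}, $\vn[L^2(I,W_h)]{\rho_t}$ and $\vn[L^2(I,W_h)]{\psi_t}$ are bounded by applying the elliptic-projection estimate~\eqref{eqn:combined-vcs-estimate-elliptic-proj} to $u_t,\sigma_t,\tilde w_t$, which is exactly the origin of the $\vn[L^1(H^{r-1})]{\Delta u_t}$ and $\vn[L^1(H^{r+1})]{\tilde w_t}$ contributions (with the $T$ factor emerging from passing from $L^2(I,\cdot)$ to $L^1(I,\cdot)$ via Cauchy--Schwarz, together with the order-in-$h$ weights $1,h,h,h^2$ on the four outputs of the parabolic theorem matching the four quantities on the left).

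The data interpolation leftovers $\vn[L^2(I,W_h)]{(\Pi_h-i_h^*)u_t}$ and $\vn[L^2(I,W_h)]{d_h^*(\Pi_h - i_h^*)u_t}$, plus the harmonic-load residue $\vn[L^2(I,W_h)]{\tilde p_h}$, are then handled by Theorem~\ref{thm:fam-of-projections} and the variational-crimes estimate $\mn{I-J_h}\leq Ch^{s+1}$ of Theorem~\ref{thm:var-crime-I-Jh-est}; these feed into the $h^{s+1}$ column and are dominated by the terms already written. Finally, the harmonic part of the time-ignorant discrete problem is $\tilde p_h = P_{\f H_h}\Pi_h(-\Delta u)$, which vanishes up to the $(I-\pi_h')$-type defect and is again absorbed into $h^{s+1}\vn[L^1(W)]{\Delta u}$ by the same argument as in the static case.

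The main obstacle I anticipate is bookkeeping: matching the four $h$-powers $1,h,h,h^2$ on the left to the correct terms supplied by the parabolic theorem while keeping track of which terms arise from applying~\eqref{eqn:combined-vcs-estimate-elliptic-proj} to $u$ versus $u_t$, and ensuring the harmonic-form quantity $\tilde w_t$ inherits its $H^{r+1}$ regularity from~\eqref{eqn:regularity-estimate} (so the improved rate $h^{r+1}$ survives after differentiating in time). Everything else is essentially triangle inequality, pointwise-in-time application of the elliptic theorem, and a single Cauchy--Schwarz step in the Bochner norm to pass from $L^2(I,W)$ to $L^1(I,W)$ accompanied by the factor $T^{1/2}$ which is then absorbed along with a factor of $T^{1/2}$ into the single linear $T$ appearing in the statement.
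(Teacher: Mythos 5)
Your proposal follows essentially the same route as the paper: a triangle inequality through the elliptic projection, the pointwise-in-$t$ elliptic estimate \eqref{eqn:combined-vcs-estimate-elliptic-proj} for the projection error, the main parabolic Theorem~\ref{eq:main-parabolic-estimate} for $\theta,\varepsilon,d\theta,d\varepsilon$ with the weights $1,h,h,h^{2}$, the observation that the time-differentiated elliptic projection solves the problem with load $-\Delta u_t$, and the absorption of the $h^{s+1}$ data/crime terms via Theorems~\ref{thm:fam-of-projections} and~\ref{thm:var-crime-I-Jh-est}, followed by integration in $t$ to produce the factor $T$ on the already-constant Bochner norms. This matches the paper's argument in both structure and bookkeeping, so the proposal is correct.
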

(We abbreviate $L^{p}(I,X)$ as $L^{p}(X)$.) The constants $T$, of course, can be further rolled into the constant $C$. We remark that in previous results, factors of $T$ show up on the $\vn{\Delta u_{t}}$ terms, and, heuristically speaking, this is due to the $u_{t}$ being a physically different quantity, namely, a rate of change. However, the appearance of the factor of $T$ on the  $\vn{\Delta u}$ comes from the harmonic approximation error $\tilde p_{h}$, which is, physically speaking, a harmonic source term. The details depend on the nature of the approximation operators $\Pi_{h}$.
\begin{proof}
By the triangle inequality, we have that \eqref{eqn:combined-vcs-estimate-evol} breaks up into something of the form \eqref{eqn:combined-vcs-estimate} (taking $(\tilde \sigma_{h}, \tilde u_{h},\tilde p _{h})$ to be elliptic projection with $\tilde f = -\Delta u(t)$ and $\tilde p =0$; here $\tilde f$ is not to be confused with the \emph{parabolic} source term $f(t)$) and
\begin{equation}
\mn{i_{h}}\, \left(\vn[h]{\theta(t)} + h(\vn[h]{\varepsilon(t)} + \vn[h]{du(t)})+h^{2}\vn[h]{d\varepsilon(t)} \right),
\end{equation}
recalling the error quantities defined in \eqref{eq:error-funcs-rho}-\eqref{eq:error-funcs-sig}. Now, substituting our estimates \eqref{eq:main-parabolic-estimate-u}-\eqref{eq:main-parabolic-estimate-dsig}, we then have
\begin{multline}
\vn[h]{\theta(t)}  \leq \vn[L^{1}(W_{h})]{\rho_{t}} + \vn[L^{1}(W_{h})]{\tilde p_{h}} + \vn[L^{1}(W_{h})] {(\Pi_{h} - i_{h}^{*})u_{t}}\\
\leq C\left(\vn[L^{1}(W)]{i_{h} \tilde u_{h,t} - u_{t}} +   \vn[L^{1}(W_{h})]{\tilde p_{h}} +\vn{I-J_{h}}\, \vn[L^{1}(W)]{u_{t}}  + \vn[L^{1}(W_{h})] {(\Pi_{h} - i_{h}^{*})u_{t}}\right) \\
\leq C_{1} h^{r+1} \left(\vn[L^{1}(H^{r-1})] {\Delta u}  +\vn[L^{1}(H^{r-1})] {\Delta u_{t}} +\vn[L^{1}(H^{r+1})]{\tilde w}+\vn[L^{1}(H^{r+1})]{\tilde w_{t}}\right) \\ + C_{2}h^{s+1} \left(\vn[L^{1}(W)]{\Delta u} + \vn[L^{1}(W)]{\Delta u_{t}}+\vn[L^{1}(W)]{\tilde w}+\vn[L^{1}(W)]{\tilde w_{t}}\right).
\label{eq:main-parabolic-estimate-u-mfld}
\end{multline}
For $\vn[h]{d\theta} + \vn[h]{\varepsilon}$, the computation is almost exactly the same, except with possibly different constants, to account for using $L^{2}$ Bochner norms, and that :
\begin{multline*}
\vn[h]{d\theta(t)} + \vn[h]{\varepsilon(t)} \\
 \leq C\left( \vn[L^{2}(W)]{i_{h} \tilde u_{h,t} - u_{t} } +   \vn[L^{2}(W_{h})]{\tilde p_{h}}+ \mn{I-J_{h}}\, \vn[L^{2}(W)]{u_{t}} +  \vn[L^{2}(W_{h})] {(\Pi_{h} - i_{h}^{*})u_{t}}\right)\\
C_{3} h^{r+1} \left(\vn[L^{2}(H^{r-1})] {\Delta u}  +\vn[L^{2}(H^{r-1})] {\Delta u_{t}} +\vn[L^{2}(H^{r+1})]{\tilde w}+\vn[L^{2}(H^{r+1})]{\tilde w_{t}}\right)\\ + C_{4}h^{s+1} \left(\vn[L^{2}(W)]{\Delta u} + \vn[L^{2}(W)]{\Delta u_{t}} +\vn[L^{2}(W)]{\tilde w}+\vn[L^{2}(W)]{\tilde w_{t}}\right).
\end{multline*}
These terms are actually absorbed into the lower order terms by the extra factor of $h$, due to consisting entirely of the same order terms except using a different norm. However, the situation is slightly different for $\vn[h]{d\varepsilon}$; namely we use \eqref{eqn:improved-du-sig} to get a term of order $h^{r}$, and the $d_{h}^{*}$ on the variational crime part also removing a factor of $h$:
\begin{multline*}
\vn[h]{d\varepsilon(t)} \leq C\left(\vn[L^{2}(W_{h})]{\psi_{t}} + \vn[L^{2}(W_{h})]{d^{*}_{h} (\Pi_{h}-i_{h}^{*}) u_{t} }\right) \\
 \leq C\left(\vn[L^{2}(W)]{i_{h}\tilde \sigma_{h,t} - \sigma_{t}} +\mn{I-J_{h}}\,\vn[L^{2}(W)]{\sigma_{t}} +\vn[L^{2}(W_{h})]{d^{*}_{h} (\Pi_{h}-i_{h}^{*}) u_{t} }\right)\\
 \leq C_{5} h^{r} (\vn[L^{2}(H^{r-1})] {\Delta u_{t}} +\vn[L^{2}(H^{r+1})]{\tilde w}) \\
 + C_{6}h^{s} (\vn[L^{2}(W)]{\Delta u} + \vn[L^{2}(W)]{\Delta u_{t}}  +\vn[L^{2}(W)]{\tilde w}+\vn[L^{2}(W)]{\tilde w_{t}})
\end{multline*}
However, we see that multiplying by $h^{2}$, this term also gets absorbed; thus we need only consider the error from $\vn[h]{d\theta}$ in further calculation of the combined estimate. We have, thus far:
\begin{multline} \label{eqn:combined-pointwise}
\vn{u(t)- i_{h} u_{h}(t)}  \\+ h\left(\vn{d(u(t)-i_{h} u_{h}(t))} + \vn{\sigma(t) - i_{h}\sigma_{h}(t)} \right) 
+ h^{2} \vn{d(\sigma(t) - i_{h}\sigma_{h}(t))} \\
\leq C_{1} h^{r+1} \left(\vn[L^{1}(H^{r-1})] {\Delta u}  +\vn[L^{1}(H^{r-1})] {\Delta u_{t}} +\vn[L^{1}(H^{r+1})]{\tilde w}+\vn[L^{1}(H^{r+1})]{\tilde w_{t}} \right) \\+ C_{2}h^{s+1} (\vn[L^{1}(W)]{\Delta u} + \vn[L^{1}(W)]{\Delta u_{t}} +\vn[L^{1}(W)]{\tilde w}+\vn[L^{1}(W)]{\tilde w_{t}}) \\+ C\left( h^{r+1}\left(\vn[H^{r-1}]{\Delta u(t)} +\vn[H^{r+1}]{\tilde w(t)}\right) + h^{s+1}\left(\vn{\Delta u(t)} +\vn{\tilde w(t)}\right)\right).
\end{multline}
Integrating with respect to $t$ from $0$ to $T$, we find that the already-present Bochner norms are constant and thus introduce an extra factor of $T$. Absorbing the constants except $T$ gives the result.
\end{proof}
 This shows, in particular, that the optimal rate of convergence occurs when $r=s$, i.e., the polynomial degree of the finite element functions matches the degree of polynomials used to approximate the hypersurface. This tells us, for example, it is not beneficial to use higher-order finite elements on, say, a piecewise linear triangulation. 
Finally, to put these estimates into some perspective and help develop some intuition for their meaning, we present the generalization of the estimates of Thom\'ee from the introduction.
\begin{corollary}[Generalization of \cite{T2006,GiHo11a,AC2012}] Focusing on just the components $u$ and $\sigma$ separately, we have the following estimates (assuming the regularity estimates \eqref{eqn:regularity-estimate} are satisfied), and supposing $r= s$, i.e., the finite element spaces considered consist of polynomials of the same degree as the interpolation on the surface:
\begin{equation*}
 \vn{u(t) - i_{h}u_{h}(t)} \leq Ch^{r+1}\left(\vn[H^{r+1}]{u(t)}  \vphantom{\int_{0}^{t}}+ \int_{0}^{t} \left(\vn[H^{r+1}]{u(s)} + \vn[H^{r+1}]{u_{t}(s)}\right)\; ds\right)
\end{equation*}
\[
\vn{\sigma(t) -i_{h}\sigma_{h}(t)} \leq  Ch^{r+1}\left(\vn[H^{r+2}]{u(t)} + \left(\int_{0}^{t} \left(\vn[H^{r+1}]{u(s)}^{2} + \vn[H^{r+1}]{u_{t}(s)}^{2}\right)\,ds\right)^{1/2}\right)
\]
\end{corollary}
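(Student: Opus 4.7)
The plan is to apply the triangle inequality, splitting each quantity through the elliptic projection $(\tilde{\sigma}_h, \tilde{u}_h, \tilde{p}_h)$ as an intermediate, so that one factor is controlled by the elliptic estimate \eqref{eqn:combined-vcs-estimate-elliptic-proj} at the single time $t$, and the other by the evolution error estimates \eqref{eq:main-parabolic-estimate-u} and \eqref{eq:main-parabolic-estimate-du-sig}. Concretely, write
\[
\vn{u(t) - i_h u_h(t)} \leq \vn{u(t) - i_h \tilde u_h(t)} + \mn{i_h}\,\vn[h]{\theta(t)},
\]
and analogously for $\sigma$, using $\vn{\sigma(t) - i_h \sigma_h(t)} \leq \vn{\sigma(t) - i_h \tilde\sigma_h(t)} + \mn{i_h}\vn[h]{\varepsilon(t)}$. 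Since $r = s$, both the $h^{r+1}$ and $h^{s+1}$ contributions in \eqref{eqn:combined-vcs-estimate-elliptic-proj} and in the main parabolic theorem collapse onto the single rate $h^{r+1}$, matching the desired conclusion.

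For the elliptic projection piece applied at the fixed time $t$, I will use the strengthened versions of \eqref{eqn:improved-u}--\eqref{eqn:improved-du-sig} with load data $\tilde f = -\Delta u(t)$ and prescribed harmonic part $\tilde w(t) = P_{\f H} u(t)$. The key observation is that the regularity estimate \eqref{eqn:regularity-estimate} and the boundedness of the Hodge projection onto the finite-dimensional harmonic space $\f H^{k}$ give
\[
\vn[H^{r-1}]{\Delta u(t)} + \vn[H^{r+1}]{\tilde w(t)} \leq C\vn[H^{r+1}]{u(t)},
\qquad
\vn[H^{r-1}]{\Delta u(t)} + \vn[H^{r+1}]{\tilde w(t)} \leq C\vn[H^{r+2}]{u(t)},
\]
where the second bound, needed for the $\sigma$-estimate, uses a shift of the regularity index by one so that $\vn{\sigma - i_h\tilde\sigma_h}$ gains a factor of $h$ relative to the generic $h^r$ rate (so becomes $h^{r+1}\vn[H^{r+2}]{u}$); the harmonic term $\tilde w$, being smooth, is never the bottleneck.

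For the evolution piece, I substitute \eqref{eq:main-parabolic-estimate-u} directly for $\vn[h]{\theta(t)}$, expand $\vn[L^{1}(I,W_h)]{\rho_t}$ via the inequality for time-derivatives given in the statement of Theorem~\ref{eq:main-parabolic-estimate}, and then reapply the elliptic estimates with $\tilde f = -\Delta u_t(s)$ and $\tilde w = \tilde w_t(s)$ at each time $s$. The variational-crime term $\mn{I - J_h}\vn[L^1(W)]{u_t}$ contributes an $h^{s+1} = h^{r+1}$ factor by Theorem~\ref{thm:var-crime-I-Jh-est}, the data interpolation terms $\vn[L^1(W_h)]{(\Pi_h - i_h^*)u_t}$ are controlled by Theorem~\ref{thm:fam-of-projections}, and the harmonic remainder $\vn[L^1(W_h)]{\tilde p_h}$ splits through the same channel. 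Combining and using $\vn[H^{r-1}]{\Delta u_t} \leq C\vn[H^{r+1}]{u_t}$ produces the integrand $\vn[H^{r+1}]{u(s)} + \vn[H^{r+1}]{u_t(s)}$ exactly as required. The $\sigma$-analog is identical in spirit but uses the $L^2$-in-time estimate \eqref{eq:main-parabolic-estimate-du-sig} for $\vn[h]{\varepsilon(t)}$, which is why the integral in the second conclusion carries a square-root and squared integrands.

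The principal technical nuisance, and where I would spend most care, is keeping the bookkeeping of the Sobolev and Bochner indices consistent across the two different spatial rates that appear in \eqref{eqn:combined-vcs-estimate-elliptic-proj} (the $H^{r\pm 1}$ indices shift by $\pm 1$ depending on whether one uses the improved estimate) and across the $L^1$ versus $L^2$ time norms dictated by the main parabolic theorem. The harmonic-form terms $\tilde w$ and $\tilde p_h$ in particular need to be absorbed cleanly; since $\tilde w = P_{\f H} u$ is smooth and the harmonic space is finite-dimensional, every norm of $\tilde w$ and $\tilde w_t$ is dominated by $\vn[H^{r+1}]{u}$ and $\vn[H^{r+1}]{u_t}$ respectively, so they disappear into the $u$-norms in the final statement and do not pollute the rate.
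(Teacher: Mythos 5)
Your proposal follows essentially the same route as the paper's own proof: triangle inequality through the elliptic projection, the pointwise-in-time elliptic estimate with $\tilde f = -\Delta u(t)$ and $\tilde w = P_{\f H}u(t)$ for one piece, the $L^1$-in-time bound \eqref{eq:main-parabolic-estimate-u} for $\theta$ and the $L^2$-in-time bound for $\varepsilon$ (whence the square root) for the other, the regularity shift from $H^{r-1}$ to $H^{r}$ on $\Delta u$ to recover the extra factor of $h$ in the $\sigma$-estimate, and the consolidation of the $\Delta u$ and $\tilde w$ norms into norms of $u$. This matches the paper's argument in both structure and detail, so no further comparison is needed.
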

This easily leads to an estimate in a Bochner $L^{\infty}$ norm (simply take the sup in the non-Bochner norm terms and $t=T$ in the integrals); this shows that the error in time is small at every $t \in I$. Similar estimates hold for $L^{2}(I,W)$ norms.
\begin{proof} We consider the improved error estimate and variational crimes in $u$ and $\sigma$ separately. We first have, by expanding the terms in \eqref{eqn:theta-est} as in the derivation of \eqref{eq:main-parabolic-estimate-u-mfld},
\begin{multline*}
\vn{u(t) - i_{h}u_{h}(t)} \leq C\left( \vn{u(t) -i_{h}\tilde u_{h}(t)}+ \vn{\theta(t)}\right)  \\\leq
Ch^{r+1}\left(\vn[H^{r-1}]{\Delta u(t)} + \vn[H^{r+1}]{\tilde w(t)} \vphantom{\int_{0}^{t}} \right. \\
\left.+ \int_{0}^{t} \left(\vn[H^{r-1}]{\Delta u(s)} + \vn[H^{r-1}]{\Delta u_{t}(s)}+\vn[H^{r-1}]{\tilde w(s)}+\vn[H^{r-1}]{\tilde w_{t}(s)}\right) \; ds\right).\end{multline*}
The result follows by noting that $\vn[H^{r+1}]{u}$ includes estimates on all the second derivative terms in $u$, and $\tilde w= P_{\f H} u$, so those two norms can all be combined (with possibly different constants). Next, we consider $\sigma$. The improved error estimates \cite[p. 342]{AFW2010} imply that if we do not combine estimates involving $du$ with those of $\sigma$ for the modified solution, and $\tilde f$ is regular enough to use the $H^{r}$- rather than $H^{r-1}$-norm, then we can gain back one factor of $h$, so that it is of order $h^{r+1}$ (rather than $h^{r}$ as in \eqref{eqn:improved-du-sig}). On the other hand, the elliptic projection error $\vn{\varepsilon(t)}$ still can be taken along with $\vn{d\sigma(t)}$ and was of order $h^{r+1}$ to begin with. Thus, applying \eqref{eqn:eps-squared-d-theta-squared}, we have
\begin{multline*}
\vn{\sigma(t) - i_{h}\sigma_{h}(t)} \leq C\left( \vn{\sigma(t) -i_{h}\tilde \sigma_{h}(t)} +\vn{\varepsilon(t)} + \vn{du(t)}\right)  \\\leq
Ch^{r+1}\left(\vn[H^{r}]{\Delta u(t)} + \vn[H^{r+1}]{\tilde w(t)} \vphantom{\int_{0}^{t}} \right. \\
\left.+ \left[\int_{0}^{t} \left(\vn[H^{r-1}]{\Delta u(s)}^{2} + \vn[H^{r-1}]{\Delta u_{t}(s)}^{2}+\vn[H^{r-1}]{\tilde w(s)}^{2}+\vn[H^{r-1}]{\tilde w_{t}(s)}^{2}\right) \; ds\right]^{1/2}\right)\\
\leq Ch^{r+1}\left(\vn[H^{r+2}]{u(t)} + \left(\int_{0}^{t} \left(\vn[H^{r+1}]{u(s)}^{2} + \vn[H^{r+1}]{u_{t}(s)}^{2}\right)\,ds\right)^{1/2}\right),
\end{multline*}
where we have used the same consolidation techniques for the norms on $\Delta u$ and $\tilde w$ into norms on $u$ as before.
\end{proof}
We see the variational crimes (arising from the extra $\tilde p_{h}$) account for the sole additional term in the integrals. This cannot be improved without further information on the projections $\Pi_{h}$. Otherwise, for $r=1$, which correspond to piecewise linear discontinuous elements for $2$-forms ($u$), and piecewise quadratic elements for $1$-forms ($\sigma$) with normal continuity (Raviart-Thomas elements), as studied by Thom\'ee, we obtain the estimates he derived (and since the $\tilde p_{h}$ is not there in his case, we have that the extra terms with $u$ do not appear under the integral sign).

%%%%%%%%%%%%%%%%%%%%%%%%%%%%%%%%%%
% Numerical experiments
\section{A Numerical Example}\label{sec:num-experiments}

In order to actually simulate a solution to the Hodge heat equation, we consider the scalar heat equation on a domain in $M\subseteq \R^{2}$, but now using a mixed method with $2$-forms rather than the functions. We return to the evolution equation for both $\sigma$ and $u$, \eqref{eq:boch-mixedweak-time} above, which we recall here:
\begin{equation}
\begin{tabular}{rllll}
$\aip{\sigma_t,\omega}  + \aip{d\sigma, d\omega}$ & $= \aip{f,d\omega},$ & $\forall~\omega\in V^{k-1},$ & $t\in I$,  \\[2mm]
$\aip{u_{t},\varphi} + \aip{d\sigma,\varphi} + \aip{du,d\varphi}$ & $= \aip{f,\varphi},$ & $\forall~ \varphi\in V^k,$ & $t\in I$, \\[2mm]
$u(0)$ & $= g$.
\end{tabular}
\end{equation}

Given $S_{h} \subseteq V^{k} = H\Omega^{2}(M)$ and $H_{h}\subseteq V^{k-1}= H\Omega^{1}(M)$, we choose bases, and use the semidiscrete equations \eqref{eq:discrete-evolution-system}, which we recall here (setting $U$ to be the coefficients of $u_{h}$ in the basis for $S_{h}$, and $\Sigma$ to be the coefficients of $\sigma_{h}$ in the basis for $H_{h}$)
\begin{equation}
\frac{d}{dt}\begin{pmatrix}
D &-B^{T}\\
0 & A
\end{pmatrix}
\begin{pmatrix}
\Sigma \\ U
\end{pmatrix} = \begin{pmatrix}0 & 0 \\
-B & -K
\end{pmatrix}\begin{pmatrix}
\Sigma \\ U
\end{pmatrix} + \begin{pmatrix} 0\\ F
\end{pmatrix}
\end{equation}
This may be discretized via standard methods for ODEs. For our implementation, we use the backward Euler method.
\begin{figure}[t] %  figure placement: here, top, bottom, or page
   \centering
       \begin{subfigure}[t]{.4\linewidth}
    	\centering
    	\includegraphics[width=.8\linewidth]{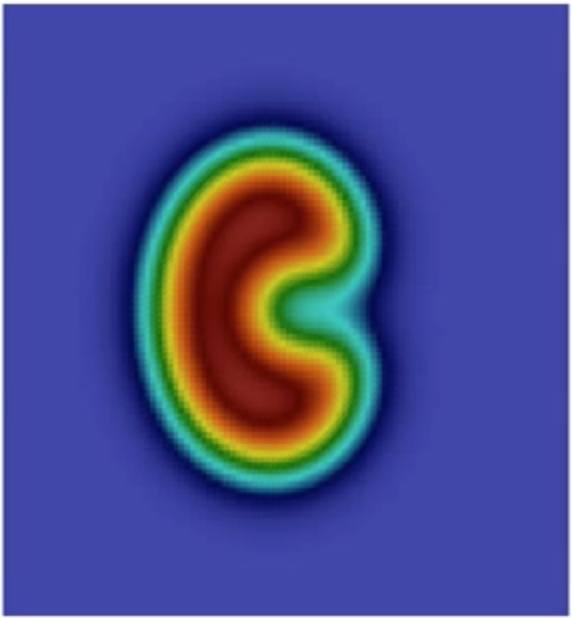} 
         \caption{1 second}
    \end{subfigure}
    \begin{subfigure}[t]{.4\linewidth}
    	\centering
    	\includegraphics[width=.8\linewidth]{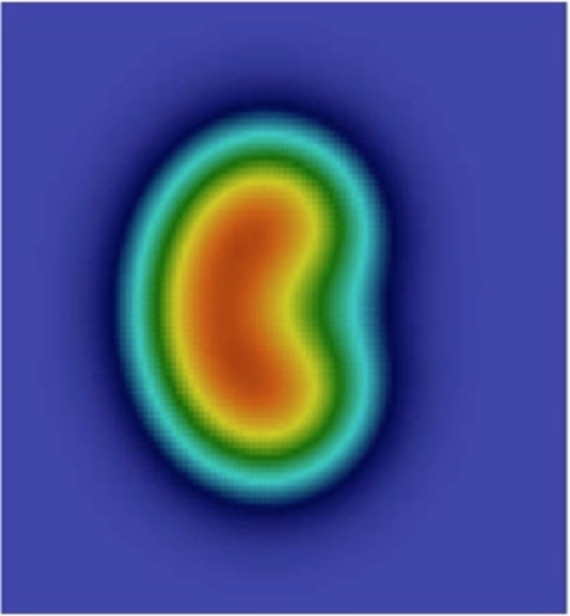} 
         \caption{2 seconds}
    \end{subfigure}
    \begin{subfigure}[t]{.4\linewidth}
    	\centering
    	\includegraphics[width=.8\linewidth]{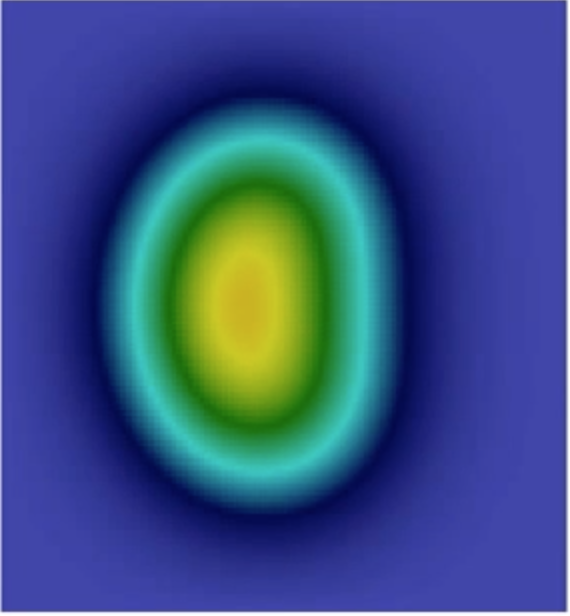} 
         \caption{3 seconds}
    \end{subfigure}
    \begin{subfigure}[t]{.4\linewidth}
    	\centering
    	\includegraphics[width=.8\linewidth]{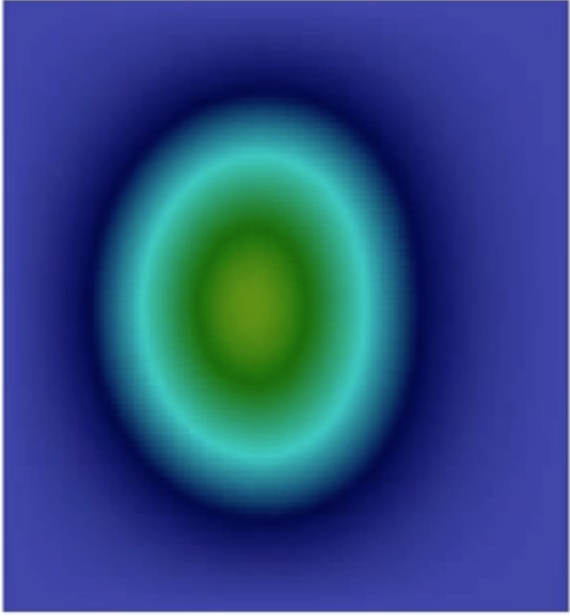} 
         \caption{4 seconds}
    \end{subfigure}
   \caption{Hodge heat equation for $k=2$ in a square modeled as a $100 \times 100$ mesh, using the mixed finite element method given above. Initial data is given as the (discontinuous) characteristic function of a C-shaped set in the square. The timestepping method is given by the backward Euler discretization, with timestep $\Delta t = 5 \times 10^{-5}$.}
   \label{fig:hodgeheat}
\end{figure}
This means we consider sequences $(\Sigma^{n},U^{n})$ in time, and then rewrite the derivative instead as a finite difference, evaluating the vector field portion on the right side at timestep $n+1$, taking $M =\begin{pmatrix}
D &-B^{T}\\
0 & A
\end{pmatrix}$:
\[
\frac{1}{\Delta t} M \left(\begin{pmatrix} \Sigma^{n+1} \\ U^{n+1} \end{pmatrix}-  \begin{pmatrix}\Sigma^{n} \\ U^{n} \end{pmatrix} \right) =  \begin{pmatrix}0 & 0 \\
-B & -K
\end{pmatrix}\begin{pmatrix}
\Sigma^{n+1} \\ U^{n+1}
\end{pmatrix} + \begin{pmatrix} 0\\ F^{n+1}
\end{pmatrix}
\]
or
\[
 \left(M + \Delta t \begin{pmatrix} 0&0\\B&K \end{pmatrix}\right)
\begin{pmatrix}
\Sigma^{n+1}\\
U^{n+1}
\end{pmatrix} = M  \begin{pmatrix}\Sigma^{n} \\ U^{n} \end{pmatrix} + \Delta t \begin{pmatrix} 0\\ F^{n+1}\end{pmatrix}.
\]
We now have written the system as a sparse matrix times the unknown $(\Sigma^{n+1},U^{n+1})$. This allows us to solve the system directly using sparse matrix algorithms without explicitly inverting any matrices, making the iterations efficient. To analyze the error of the approximations, we can combine the above error estimates with the standard error analysis of Euler methods. See Figure \ref{fig:hodgeheat}.

%%%%%

% !TEX root = m.tex
%%%%%%%%%%%%%%%%%%%%
\section{Conclusion and Future Directions}
\label{sec:conc}
We have seen that the abstract theory of Hilbert complexes, as detailed by Arnold, Falk, and Winther~\cite{AFW2010}, and Bochner spaces, as detailed in Gillette and Holst~\cite{GiHo11a} and Arnold and Chen \cite{AC2012}, has been very useful in clarifying the important aspects of elliptic and parabolic equations. The mixed formulation gives great insight into questions of existence, uniqueness, and stability of the numerical methods (linked by the cochain projections $\pi_{h}$). The method of Thom\'ee~\cite{T2006} allows us to leverage the existing theory for elliptic problems to apply to parabolic problems, taking care of the remaining error terms by the use of differential inequalities and Gr\"onwall estimates (in the important error evolution equations \eqref{eq:par-error} above). Incorporating the analysis of variational crimes allow us to carry this theory over to the case of surfaces and their approximations.

We remark on some possible future directions for this work. Some existing surface finite elements for parabolic equations have been studied by Dziuk and Elliott~\cite{DzEl06}, and much other work by Dziuk, Elliott, Deckelnick~\cite{DeDzEl2005,Deckelnick.K;Dziuk.G2003}, which actually treat the case of an evolving surface, and treat a nonlinear equation, the mean curvature flow. Generally speaking, this translates to an additional time dependence for evolving metric coefficients, and a logical place to start is in the Thom\'ee error evolution equations \eqref{eq:par-error}. Nonlinear evolution equations for evolving metrics also suggests the Ricci flow~\cite{Pe03a,ChKn04,CLN06}, instrumental in showing the Poincar\'e conjecture. The challenge there, besides nonlinearity, is that tensor equations do not necessarily fit in the framework for FEEC. On the other hand, the Yamabe flow~\cite{Schoen.R1984}, which solves for a conformal factor for the metric (and is equivalent to the Ricci flow in dimension $2$) suggests an interesting nonlinear scalar evolution equation for which this analysis may be useful.

Gillette, Holst, and Zhu~\cite{GiHo11a} also analyzed hyperbolic equations in this framework, and it would be interesting and useful to analyze methods on surfaces (including the evolving case), as well as taking a more integrated approach in spacetime. This is usually taken care of using the discrete exterior calculus (DEC), the finite-difference counterpart to FEEC to analyze hyperbolic equations \cite{LuMa12}.

%!TEX root = m.tex
\appendix
\begin{appendix}
\end{appendix}
%%%%%%%%%%%%%%%%%%%%%%%%%%%%%%%%%%%%%%%%%%%%%%%
%%%%%%%%%%%%%%%%%%%%%%%%%%%%%%%%%%%%%%%%%%%%%%%%

%%%%%%%%%%%%%%%%%%%%%%%%%%%%%%%%%%%%%%%%%%%%%%%%%%%%%%%%%%%%%%%%%%%%%%%%%%%%%%
\section{Acknowledgments}
\label{sec:ack}
MH was supported in part by
NSF Awards~1217175, 1262982, and 1318480.
CT was supported in part by
NSF Award~1217175.

\bibliographystyle{abbrv}
\bibliography{../bib/mjh,../bib/akg,../bib/HoSt2010b,../bib/clt,../bib/papers,../bib/books,../bib/library}

\begin{thebibliography}{10}

\bibitem{AC2012}
D.~Arnold and H.~Chen.
\newblock Finite element exterior calculus for parabolic problems.
\newblock {\em arXiv:1209.1142}, 2012.

\bibitem{AFW2006}
D.~Arnold, R.~Falk, and R.~Winther.
\newblock Finite element exterior calculus, homological techniques, and
  applications.
\newblock {\em Acta Numerica}, pages 1--155, 2006.

\bibitem{AFW2010}
D.~Arnold, R.~Falk, and R.~Winther.
\newblock {Finite element exterior calculus: from Hodge theory to numerical
  stability}.
\newblock {\em Bulletin of the American Mathematical Society}, 47(2):281--354,
  2010.

\bibitem{Babuska.I1971}
I.~Babu{\v s}ka.
\newblock Error bounds for the finite element method.
\newblock {\em Numerische Mathematik}, 16:322--333, 1971.

\bibitem{BoTu82}
R.~Bott and L.~W. Tu.
\newblock {\em Differential Forms in Algebraic Topology}.
\newblock {Graduate Texts in Mathematics}. Springer, New York, NY, 1982.

\bibitem{Brae07}
D.~Braess.
\newblock {\em Finite Elements: Theory, Fast Solvers, and Applications in Solid
  Mechanics}.
\newblock Cambridge University Press, third edition, 2007.

\bibitem{BrSc02}
S.~C. Brenner and L.~R. Scott.
\newblock {\em The Mathematical Theory of Finite Element Methods}.
\newblock Springer-Verlag, New York, NY, second edition, 2002.

\bibitem{BrLe92}
J.~Br\"uning and M.~Lesch.
\newblock {Hilbert Complexes}.
\newblock {\em J. Funct. Anal.}, 108(1):88--132, August 1992.

\bibitem{ChKn04}
B.~Chow and D.~Knopf.
\newblock {\em The Ricci Flow: An Introduction}.
\newblock American Mathematical Society, Providence, RI, 2004.

\bibitem{CLN06}
B.~Chow, P.~Lu, and L.~Ni.
\newblock {\em Hamilton's Ricci Flow}.
\newblock {Graduate Studies in Mathematics}. American Mathematical Society,
  Providence, RI, 2006.

\bibitem{Deckelnick.K;Dziuk.G2003}
K.~Deckelnick and G.~Dziuk.
\newblock Numerical approximation of mean curvature flow of graphs and level
  sets.
\newblock In P.~Colli and J.~Rodrigues, editors, {\em Mathematical Aspects of
  Evolving Interfaces}, 2003.

\bibitem{DeDzEl2005}
K.~Deckelnick, G.~Dziuk, and C.~M. Elliott.
\newblock Computation of geometric partial differential equations and mean
  curvature flow.
\newblock {\em Acta Numer.}, 14:139--232, 2005.

\bibitem{Demlow2009}
A.~Demlow.
\newblock Higher-order finite element methods and pointwise error estimates for
  elliptic problems on surfaces.
\newblock {\em SIAM J. Numer. Anal.}, 47(2):805--827, 2009.

\bibitem{DeDz06}
A.~Demlow and G.~Dziuk.
\newblock An adaptive finite element method for the {Laplace-Beltrami} operator
  on surfaces.
\newblock {\em SIAM J.\ Numer.\ Anal.}, 2006.
\newblock to appear.

\bibitem{doCarmoRG}
M.~P. do~Carmo.
\newblock {\em Riemannian Geometry}.
\newblock Birkh\"auser Boston, 1992.

\bibitem{Dziuk88}
G.~Dziuk.
\newblock Finite elements for the {Beltrami} operator on arbitrary surfaces.
\newblock In {\em Partial differential equations and calculus of variations},
  pages 142--155, Berlin, 1988. Springer.

\bibitem{DzEl06}
G.~Dziuk and C.~M. Elliott.
\newblock Finite elements on evolving surfaces.
\newblock {\em IMA J. Num. Anal.}, 27:262--292, 2007.

\bibitem{Ev98}
L.~C. Evans.
\newblock {\em Partial Differential Equations}.
\newblock {Graduate Studies in Mathematics}. American Mathematical Society,
  Providence, RI, 1998.

\bibitem{Fra04}
T.~Frankel.
\newblock {\em The Geometry of Physics}.
\newblock Cambridge University Press, Cambridge, UK, 2004.

\bibitem{GiHo11a}
A.~Gillette and M.~Holst.
\newblock Finite element exterior calculus for evolution problems.
\newblock Submitted for publication. Available as
  \href{http://arxiv.org/abs/1202.1573} {{\sf arXiv:1202.1573 [math.NA]}}.

\bibitem{HMS13a}
M.~Holst, A.~Mihalik, and R.~Szypowski.
\newblock Convergence and optimality of adaptive methods in the {Finite}
  {Element} {Exterior} {Calculus} framework.
\newblock Submitted for publication. Available as
  \href{http://arxiv.org/abs/1306.1886} {{\sf arXiv:1306.1886 [math.NA]}}.

\bibitem{HoSt10a}
M.~Holst and A.~Stern.
\newblock Geometric variational crimes: {Hilbert} complexes, finite element
  exterior calculus, and problems on hypersurfaces.
\newblock {\em Found.\ Comput.\ Math.}, 12(3):263--293, 2012.
\newblock Available as \href{http://arxiv.org/abs/1005.4455} {{\sf
  arXiv:1005.4455 [math.NA]}}.

\bibitem{HoSt10b}
M.~Holst and A.~Stern.
\newblock Semilinear mixed problems on {Hilbert} complexes and their numerical
  approximation.
\newblock {\em Found.\ Comput.\ Math.}, 12(3):363--387, 2012.
\newblock Available as \href{http://arxiv.org/abs/1010.6127} {{\sf
  arXiv:1010.6127 [math.NA]}}.

\bibitem{JT1981}
C.~Johnson and V.~Thomee.
\newblock Error estimates for some mixed finite element methods for parabolic
  type problems.
\newblock {\em RAIRO Anal. Num{\'e}r}, 15(1):41--78, 1981.

\bibitem{Jo11}
J.~Jost.
\newblock {\em Riemannian Geometry and Geometric Analysis}.
\newblock Universitext. Springer-Verlag, New York, NY, 6th edition, 2011.

\bibitem{LuMa12}
C.~Lubich and D.~Mansour.
\newblock Variational discretization of linear wave equations on evolving
  surfaces.
\newblock {\em Math. Comp.}, 84:513--542, 2015.

\bibitem{Pe03a}
G.~Perelman.
\newblock Ricci flow with surgery on three-manifolds.
\newblock Available as
  \href{http://arxiv.org/abs/math.DG/0303109}{arXiv:math.DG/0303109}.

\bibitem{Pet06}
P.~Petersen.
\newblock {\em Riemannian Geometry}.
\newblock {Graduate Texts in Mathematics}. Springer-Verlag, New York, NY, 2nd
  edition, 2006.

\bibitem{Pic84}
R.~Picard.
\newblock An elementary proof for a compact imbedding result in generalized
  electromagnetic theory.
\newblock {\em Mathematische Zeitschrift}, 187:151--164, 1984.

\bibitem{RR2004}
M.~Renardy and R.~Rogers.
\newblock {\em An introduction to partial differential equations}, volume~13.
\newblock Springer Verlag, 2nd edition, 2004.

\bibitem{Schoen.R1984}
R.~Schoen.
\newblock {Conformal deformation of a Riemannian metric to constant scalar
  curvature}.
\newblock {\em J. Differential Geom}, 20(2):479--495, 1984.

\bibitem{Strang88}
G.~Strang.
\newblock {\em Linear Algebra and its Applications}.
\newblock Saunders HBJ, 1988.

\bibitem{T2006}
V.~Thom{\'e}e.
\newblock {\em {Galerkin finite element methods for parabolic problems}}.
\newblock Springer Verlag, 2006.

\bibitem{We98}
G.~Weinreich.
\newblock {\em Geometrical Vectors}.
\newblock Chicago Lectures in Physics. University of Chicago Press, Chicago,
  IL, 1998.

\bibitem{W1973}
M.~Wheeler.
\newblock A priori {$L^2$} error estimates for {G}alerkin approximations to
  parabolic partial differential equations.
\newblock {\em SIAM J. Numer. Anal.}, pages 723--759, 1973.

\bibitem{Wlok92}
J.~Wloka.
\newblock {\em Partial differential equations}.
\newblock Cambridge University Press, Cambridge, 1987.
\newblock Translated from the German by C. B. Thomas and M. J. Thomas.

\end{thebibliography}
%%% need to supply additional bib files...
%\bibliography{mjh}

\vspace*{0.5cm}

\end{document}